\theoremstyle{theorem}
\newtheorem{corollary}{Corollary}
\newtheorem{lemma}[corollary]{Lemma}
\newtheorem{lemma*}[lem6]{Lemma}
\newtheorem{proposition}[corollary]{Proposition}
\newtheorem{theorem}[corollary]{Theorem}
\newtheorem{problem}[corollary]{Problem}
\newtheorem*{conjecture*}{Conjecture}
\newtheorem{conjecture}{Conjecture}
\begin{document}

\AtEndDocument{%
  \par
  \medskip
  \begin{tabular}{@{}l@{}}%
    \textsc{Gabriel Coutinho}\\
    \textsc{Dept. of Computer Science} \\ 
    \textsc{Universidade Federal de Minas Gerais, Brazil} \\
    \textit{E-mail address}: \texttt{gabriel@dcc.ufmg.br} \\ \ \\
    \textsc{Thomás Jung Spier} \\
    \textsc{Dept. of Combinatorics \& Optimization} \\ 
    \textsc{University of Waterloo, Canada} \\
    \textit{E-mail address}: \texttt{tjungspi@uwaterloo.ca} \\ \ \\
     \textsc{Shengtong Zhang} \\
    \textsc{Dept. of Mathematics} \\ 
    \textsc{Stanford University, USA} \\
    \textit{E-mail address}: \texttt{stzh1555@stanford.edu}
  \end{tabular}}

\title{Conic programming to understand sums of squares of eigenvalues of graphs}
\author{Gabriel Coutinho\footnote{gabriel@dcc.ufmg.br} \and Thomás Jung Spier\footnote{tjungspi@uwaterloo.ca}\and Shengtong Zhang\footnote{stzh1555@stanford.edu}}
\date{\today}
\maketitle
\author
\vspace{-0.8cm}

\begin{abstract} 
   In this paper we prove a conjecture by Wocjan, Elphick and Anekstein (2018) which upper bounds the sum of the squares of the positive (or negative) eigenvalues of the adjacency matrix of a graph by an expression that behaves monotonically in terms of the vector chromatic number. One of our lemmas is a strengthening of the Cauchy-Schwarz inequality for Hermitian matrices when one of the matrices is positive semidefinite. 
   
   A related conjecture due to Bollob\'{a}s and Nikiforov (2007) replaces the vector chromatic number by the clique number and sums over the first two eigenvalues only. We prove a version of this conjecture with weaker constants. An important consequence of our work is a proof that for any fixed $r$, computing a rank $r$ optimum solution to the vector chromatic number semidefinite programming is NP-hard.   
   
   We also present a vertex weighted version of some of our results, and we show how it leads quite naturally to the known vertex-weighted version of the Motzkin-Straus quadratic optimization formulation for the clique number.
\end{abstract}

\begin{center}
\textbf{Keywords}
eigenvalue sums ; vector chromatic number ; clique number
\end{center}


\ \\ \

\section{Introduction}

Let $G$ be a graph, and $\aA$ be its adjacency matrix. In this paper we study the sums of squares of positive and negative eigenvalues of $\aA$. We adopt the convention that the eigenvalues of $\aA$ are indexed from largest to smallest, that is, the eigenvalues are $\lambda_1 \geq \dots \geq \lambda_n$.

Let $s^+$ and $s^-$ denote the sum of the squares of the positive and negative eigenvalues of $\aA$, respectively. Recently, $s^+$ and $s^-$ have attracted significant attention from the spectral graph theory community. See e.g. \cite{abiad2023, elphick2023symmetry}.

Our first result establishes a novel connection between $s^{\pm}$ and the vector chromatic number of $G$. Let $\overline{\aA}$ denote the adjacency matrix of the complement of $G$. We recall that the vector chromatic number of $G$, denoted by $\chi_{\mathrm{vec}}(G)$, is given by the semidefinite program (SDP) below (this SDP was originally defined independently in \cite{mceliece1977new,schrijver1979comparison}, but the nomenclature and geometric interpretation as a ``vector'' chromatic number is due to \cite{KMS})
\begin{align} \chi_{\mathrm{vec}}(G) =\hspace{2cm} \max \quad & \langle \jJ, \zZ\rangle, \label{sdp1}\\
\textrm{subject to} \quad & \langle \iI, \zZ\rangle = 1, \nonumber\\
& \zZ \circ \overline{\aA} = \0, \nonumber\\
& \zZ \geq \0, \, \zZ \succeq \0. \nonumber
\end{align}
Here, for a matrix $\zZ$ we write $\zZ \geq \0$ if all the entries of $\zZ$ are non-negative, and $ \zZ \succeq \0$ if $\zZ$ is positive semidefinite (PSD). A matrix $\zZ$ that satisfied both $\zZ \geq \0$ and $\zZ \succeq \0$ is called doubly nonnegative in literatue.  We use $\circ$ to denote the entry-wise product (also known as the Schur product or the Hadamard product) of matrices and $\langle \cdot, \cdot\rangle$ for the inner product $\langle \aA, \bB\rangle = \tr(\aA \bB)$ on the space of real, symmetric matrices of a fixed dimension. The vector chromatic number is an efficiently computable approximation of the chromatic number. It belongs to a hierarchy of analogs of the chromatic number
$$\omega(G) \leq \chi_{\mathrm{vec}}(G) \leq \vartheta(\bar{G}) \leq \chi_f(G) \leq \chi(G)$$
where $\omega, \vartheta, \chi_f, \chi$ denote the clique number, Lov\'{a}sz theta function, fractional chromatic number, and chromatic number of $G$ respectively.

\begin{theorem}\label{thm:main_result1} For every graph $G$ with $m$ edges, we have
\[
\min\{s^+,s^-\} \geq \frac{2m}{\chi_{\mathrm{vec}}(G)}.
\]
\end{theorem}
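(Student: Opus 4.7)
The plan is to examine the primal SDP \eqref{sdp1} at an optimal $\zZ^*$ and bound $\langle \aA, \zZ^*\rangle$ in terms of $s^+$ and $s^-$ using a sharpened matrix inequality.

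Expanding $\jJ = \iI + \aA + \overline{\aA}$ rewrites the objective as
$\langle \jJ, \zZ\rangle = \operatorname{tr}(\zZ) + \langle \aA, \zZ\rangle + \langle \overline{\aA}, \zZ\rangle = 1 + \langle \aA, \zZ\rangle$,
using the trace constraint and $\zZ \circ \overline{\aA} = \0$. Hence $\chi_{\mathrm{vec}}(G) = 1 + \langle \aA, \zZ^*\rangle$. Writing $\aA = \aA^+ - \aA^-$ with $\aA^\pm \succeq \0$ and $\aA^+ \aA^- = \0$, we get $2m = \operatorname{tr}(\aA^2) = s^+ + s^-$. Substituting $2m = s^+ + s^-$, the target $\min\{s^+, s^-\} \geq 2m/\chi_{\mathrm{vec}}(G)$ is equivalent to the conjunction of the two symmetric inequalities $s^\mp \leq (\chi_{\mathrm{vec}}(G) - 1)\, s^\pm$, which I expect to be provable by a common matrix argument.

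The key ingredient is the strengthening of Cauchy--Schwarz for Hermitian matrices when one is PSD, announced in the abstract. Set $p = \langle \aA^+, \zZ^*\rangle \geq 0$ and $q = \langle \aA^-, \zZ^*\rangle \geq 0$ (both are inner products of PSD matrices), so that $p - q = \chi_{\mathrm{vec}}(G) - 1$. The standard Cauchy--Schwarz $p \leq \sqrt{s^+}\, \|\zZ^*\|_F \leq \sqrt{s^+}$ and the analogous $q \leq \sqrt{s^-}$ only give $\chi_{\mathrm{vec}}(G) \leq 1 + \sqrt{s^+}$, which is too weak. The announced strengthening should yield $(p - q)\cdot s^+ \geq s^-$ and $(p - q)\cdot s^- \geq s^+$ directly, which is exactly the reduction goal.

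The principal obstacle is finding and applying the correct strengthened inequality --- one that uses not only $\zZ \succeq \0$ but also the entrywise non-negativity $\zZ \geq \0$ and the zero-pattern constraint $\zZ \circ \overline{\aA} = \0$, since these are the features distinguishing $\chi_{\mathrm{vec}}(G)$ from weaker spectral bounds such as $1 + \lambda_1(\aA)$. I anticipate that the strengthened inequality is derived by writing $\zZ^* = BB^\top$ as a Gram matrix whose rows $b_i$ are pairwise orthogonal on non-edges and have non-negative inner products on edges, then comparing the quadratic forms $\sum_i b_i^\top \aA^\pm b_i = p, q$ with $\|\aA^\pm B\|_F^2$ through the Schur product theorem applied to $\zZ^*$ and $\aA^\pm$. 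Controlling this interaction between the doubly non-negative cone and the eigen-structure of $\aA^\pm$ --- whose sign pattern is uncorrelated with the edge/non-edge partition --- is the delicate heart of the argument.
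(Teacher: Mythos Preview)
Your reduction to proving $(\chi_{\mathrm{vec}}(G)-1)\,s^{\pm} \geq s^{\mp}$ is correct, but the route you take from there has a real gap. You fix an optimal $\zZ^*$ for the primal SDP and try to extract the inequality from the quantities $p=\langle \aA^+,\zZ^*\rangle$ and $q=\langle \aA^-,\zZ^*\rangle$. But $p-q$ is simply the number $\chi_{\mathrm{vec}}(G)-1$; once you know that, $p$ and $q$ individually carry no further information, and your ``announced strengthening should yield $(p-q)s^{\pm}\geq s^{\mp}$ directly'' is just a restatement of the goal, not a step toward it. The speculation about writing $\zZ^*=BB^\top$ and comparing $\sum_i b_i^\top \aA^{\pm} b_i$ with $\lVert \aA^{\pm}B\rVert_F^2$ does not lead to the needed bound: these quantities relate $\zZ^*$ to the \emph{eigenvalues} of $\aA^{\pm}$, not to $s^{\pm}=\lVert \aA^{\pm}\rVert^2$, and there is no mechanism by which the entrywise constraints on $\zZ^*$ convert this into the target inequality.

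The paper's argument runs in the opposite direction: rather than analysing an optimal $\zZ^*$, it \emph{substitutes} a specific feasible matrix into the SDP. The key missing idea is that for any PSD matrix $\xX$, the Schur square $\xX\circ\xX$ is doubly nonnegative (PSD by the Schur product theorem, entrywise nonnegative trivially), hence satisfies $\langle \aA,\xX\circ\xX\rangle \leq (1-1/\chi_{\mathrm{vec}})\langle \jJ,\xX\circ\xX\rangle$ by the SDP's optimality. Combined with the Cauchy--Schwarz step $\langle \aA,\xX\rangle^2 \leq \langle \aA,\aA\rangle\,\langle \aA,\xX\circ\xX\rangle$, this is the strengthened inequality in the abstract. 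One then takes $\xX=\aA^+$ (resp.\ $\aA^-$) and uses $\langle \aA,\aA^+\rangle=\langle \aA^+,\aA^+\rangle=s^+$ to get $s^{+}\leq(1-1/\chi_{\mathrm{vec}})\,2m$, hence $s^{-}\geq 2m/\chi_{\mathrm{vec}}$. So you had the right tool (Schur product theorem) but aimed it at the wrong pair of matrices: it should be applied to $\aA^{\pm}$ with itself, not to $\zZ^*$ with $\aA^{\pm}$.
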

This theorem answers a conjecture raised by Wocjan, Elphick and Anekstein \cite{wocjan2018more}, and strengthens results due to Ando and Lin \cite{ando2015proof}  and Guo and Spiro \cite{guo2022new} in which $\chi_{\mathrm{vec}}$ is replaced by $\chi$ and $\chi_f$ respectively.

In fact, our proof shows a more general claim of independent interest. Let $\xX \succeq 0$ and $\yY$ be symmetric with $0$ diagonal. Let $\aA$ be the $(0,1)$-matrix whose support coincides with the support of $\xX \circ \yY$. Since $\yY$ has $0$ on the diagonal, $\aA$ is the adjacency matrix of some underlying graph $G$. The Cauchy-Schwarz inequality states that
\[
\langle \xX,\yY\rangle^2 \leq \langle\xX,\xX\rangle \langle\yY,\yY\rangle.
\]
We show a strengthening over this inequality.
\begin{lemma}
\label{lem:C-S-improved}
For any symmetric matrices $\xX$ and $\yY$ such that $\xX \succeq 0$ and $\yY$ has zero diagonal, define $G$ as above. We have
\begin{equation}
	\langle \xX,\yY\rangle^2 \leq \left(1 - \frac{1}{\chi_\mathrm{vec}(G)} \right)\langle \xX,\xX\rangle \langle \yY,\yY\rangle. \label{cs}
\end{equation}
\end{lemma}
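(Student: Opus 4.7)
My plan is to first reduce to the case where $\yY$ is supported on the edges of $G$. Since the support of $\xX \circ \yY$ coincides with that of $\aA$, at every non-edge position one of $\xX_{ij}$ or $\yY_{ij}$ is zero; hence replacing $\yY$ by $\yY \circ \aA$ leaves $\langle \xX, \yY \rangle$ unchanged, does not increase $\langle \yY, \yY \rangle$, preserves the zero-diagonal and symmetry of $\yY$, and preserves the graph $G$. I may thus assume $\yY = \yY \circ \aA$.

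Under this assumption, every contribution to $\langle \xX, \yY \rangle$ comes from an edge position, and the ordinary Cauchy--Schwarz inequality restricted to those positions gives
\[
\langle \xX, \yY \rangle^2 \;\le\; E_{\xX}\, \langle \yY, \yY \rangle, \qquad E_{\xX} \;:=\; \sum_{(i,j)\,:\, \aA_{ij}=1} \xX_{ij}^2.
\]
The lemma will therefore follow once I establish the PSD-only statement
\[
E_{\xX} \;\le\; \Big(1 - \frac{1}{\chi_{\mathrm{vec}}(G)}\Big)\langle \xX, \xX \rangle.
\]

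To prove this, I would invoke the vector formulation of $\chi_{\mathrm{vec}}$: setting $k = \chi_{\mathrm{vec}}(G)$, there exist unit vectors $u_1, \dots, u_n$ with $\langle u_i, u_j \rangle \le -1/(k-1)$ for every edge $ij$. Their Gram matrix $\mathcal{U}$ is then PSD, has diagonal $1$, satisfies $\mathcal{U}_{ij} \le -1/(k-1)$ on edges, and satisfies $\mathcal{U}_{ij} \le 1$ everywhere else by Cauchy--Schwarz for unit vectors. The key step is Schur's product theorem applied to $\xX, \xX, \mathcal{U}$: the Hadamard product $\xX \circ \xX \circ \mathcal{U}$ is PSD, so summing all of its entries (i.e., evaluating $\jJ^{\!\top}(\xX \circ \xX \circ \mathcal{U})\jJ$) yields $\sum_{i,j} \xX_{ij}^2\, \mathcal{U}_{ij} \ge 0$. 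Splitting this sum by diagonal, edge, and non-edge positions and inserting the three bounds on $\mathcal{U}_{ij}$ rearranges to $E_{\xX}/(k-1) \le \langle \xX, \xX \rangle - E_{\xX}$, which is algebraically equivalent to the desired inequality.

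The main conceptual hurdle, and essentially the only nontrivial step, is the Schur-product trick $\xX \circ \xX \circ \mathcal{U}$: it is exactly this operation that converts the scalar edgewise bound $\mathcal{U}_{ij} \le -1/(k-1)$ into a global comparison between the edge-supported Frobenius mass $E_{\xX}$ and the full Frobenius mass $\langle \xX, \xX \rangle$. Everything else is bookkeeping.
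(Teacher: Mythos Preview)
Your proof is correct and follows the same overall architecture as the paper's: restrict to edge positions, apply Cauchy--Schwarz there, and then show that the edge-supported Frobenius mass $E_{\xX}=\langle \aA,\xX\circ\xX\rangle$ is at most $\bigl(1-\tfrac{1}{\chi_{\mathrm{vec}}(G)}\bigr)\langle \xX,\xX\rangle$. The one genuine difference is in how this last inequality is obtained. The paper works on the \emph{primal} side: it first proves (Lemma~\ref{lem:chivec}) that the modified SDP~\eqref{sdp2} also computes $\chi_{\mathrm{vec}}(G)$, and then plugs the doubly nonnegative matrix $\zZ=\xX\circ\xX$ into that program. You instead work on the \emph{dual} side, invoking the KMS vector formulation to produce a Gram matrix $\mathcal{U}$ with $\mathcal{U}_{ii}=1$ and $\mathcal{U}_{ij}\le -\tfrac{1}{k-1}$ on edges, and then use the nonnegativity of $\langle \jJ,\xX\circ\xX\circ\mathcal{U}\rangle$. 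Your route is slightly more self-contained, since it bypasses the perturbation argument needed to show that \eqref{sdp1} and \eqref{sdp2} have the same value; the paper's route, on the other hand, isolates the reusable statement that $\langle \aA,\zZ\rangle\le\bigl(1-\tfrac{1}{\chi_{\mathrm{vec}}(G)}\bigr)\langle \jJ,\zZ\rangle$ for \emph{every} doubly nonnegative $\zZ$, which is what drives the later rank-restricted programs in Section~\ref{sec:3}.
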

This lemma could provide a big improvement over the usual Cauchy-Schwarz inequality when either $\xX$ or $\yY$ are sparse.
	
The inequality in Theorem \ref{thm:main_result1} can be rewritten as
\[
	\max\{s^+,s^-\} \leq \left(1 - \frac{1}{\chi_\mathrm{vec}(G)}\right) 2m.
\]
Under this formulation, it is straightforward to spot the parallel with a long-standing conjecture due to Bollob\'{a}s and Nikiforov \cite{bollobas2007cliques}. Let $\omega(G)$ denote the clique number of $G$. We note that $\omega(G) \leq \chi_\mathrm{vec}(G)$, but there exists graphs with bounded clique number and arbitrarily large $\chi_\mathrm{vec}(G)$. For example, Balla \cite[Corollary 9]{balla23} constructed a family of graphs $G_m$ with $m$ edges, $\omega(G_m) \leq \omega_0$ for some absolute constant $\omega_0$, and $\chi_{\mathrm{vec}}(G_m) \geq m^{1/3}$.

\begin{conjecture}[Bollob\'{a}s and Nikiforov \cite{bollobas2007cliques}]\label{conj:bollobas_nikiforov}
	If $G$ is not the complete graph, then
	\[\lambda_1^2 + \lambda_2^2 \leq \left(1 - \frac{1}{\omega(G)}\right) 2m.\]
\end{conjecture}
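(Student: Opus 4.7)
\textbf{My plan} is to mirror the proof strategy of Theorem~\ref{thm:main_result1}, but with a rank-two PSD witness tuned to the top two eigenvalues. Assume first that $\lambda_2 > 0$; the complementary case $\lambda_2 \leq 0$ can be handled by combining Nikiforov's bound $\lambda_1^2 \leq (1-1/\omega(G))\cdot 2m$ with a separate control of $\lambda_2^2$. Let $u_1, u_2$ be orthonormal eigenvectors corresponding to $\lambda_1, \lambda_2$, and set
\[
\xX := \lambda_1\, u_1 u_1^\top + \lambda_2\, u_2 u_2^\top,
\]
which is PSD of rank $2$. Taking $\yY := \aA$ in Lemma~\ref{lem:C-S-improved}, a direct calculation gives $\langle \xX, \aA\rangle = \lambda_1^2 + \lambda_2^2 = \langle \xX,\xX\rangle$ and $\langle \aA,\aA\rangle = 2m$. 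The lemma, valid because $\aA$ has zero diagonal and the support of $\xX\circ\aA$ lies in $G$, then yields
\[
\lambda_1^2 + \lambda_2^2 \leq \left(1 - \frac{1}{\chi_{\mathrm{vec}}(G)}\right)\cdot 2m.
\]
This already settles the conjecture with $\chi_{\mathrm{vec}}(G)$ in place of $\omega(G)$.

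\textbf{The main obstacle} is bridging the gap from $\chi_{\mathrm{vec}}(G)$ down to $\omega(G)$. Balla's construction cited earlier shows $\chi_{\mathrm{vec}}$ can be polynomially larger than $\omega$, so no universal comparison of the two can plug the gap. The only feature of the witness $\xX$ above that remains unused is its rank, which is exactly $2$. This motivates trying to prove a rank-restricted refinement of Lemma~\ref{lem:C-S-improved}: if $\xX$ has rank at most $r$, then the factor $1/\chi_{\mathrm{vec}}(G)$ should be replaced by $1/\chi_{\mathrm{vec}}^{(r)}(G)$, where the rank of the SDP variable $\zZ$ in \eqref{sdp1} is constrained to be at most $r$. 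One easily checks that the rank-$1$ version of the SDP equals $\omega(G)$ exactly, attained by $\zZ = \mathbf{1}_K \mathbf{1}_K^\top / \omega$ for a maximum clique $K$; and rank-$2$ solutions are geometrically rigid enough (vertices mapped to nonnegative vectors in $\mathbb{R}^2$ with orthogonality on non-edges) to make a combinatorial bound $\chi_{\mathrm{vec}}^{(2)}(G) \leq c \cdot \omega(G)$ plausible for some universal $c$.

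\textbf{How I would attempt the rank-restricted inequality.} Whatever the proof of Lemma~\ref{lem:C-S-improved} is, it presumably constructs a feasible point of the vector chromatic SDP from $\xX$; if one writes $\xX = VV^\top$ with $V \in \mathbb{R}^{n \times r}$ and uses the rows of $V$ (suitably rescaled) as vectors in the factored SDP, the resulting solution has rank at most $r$ by construction. Verifying the doubly non-negative and diagonal constraints after normalization is where I expect the bookkeeping to cost constants, and this is presumably where the ``weaker constants'' advertised in the abstract arise. The deepest difficulty is then combinatorial: bounding $\chi_{\mathrm{vec}}^{(2)}(G)$ by a multiple of $\omega(G)$, either by a direct clique-partition argument using the planar geometry of rank-$2$ SDP solutions, or by connecting it to the Motzkin--Straus variational formulation that the authors invoke in the vertex-weighted setting.
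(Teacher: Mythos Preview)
First, note that the paper does \emph{not} prove Conjecture~\ref{conj:bollobas_nikiforov}; it proves only the weaker Theorems~\ref{thm:main_result2} and~\ref{thm:main_result3}. Your proposal is essentially the paper's strategy for those theorems, with one important miscount. The proof of Lemma~\ref{lem:C-S-improved} does not plug $\xX$ itself into the SDP; it plugs in $\zZ = \xX \circ \xX$ (via Lemma~\ref{lem:chivec}), because the feasible region requires $\zZ \geq \0$ and the Schur square is what delivers entrywise nonnegativity. For a rank-$2$ matrix $\xX$, the Schur square $\xX \circ \xX$ has rank up to $3$, not $2$. So the relevant rank-restricted program is $\chi_{\mathrm{vec},3}$ (and its refinements $\chi'_{\mathrm{vec},3}$, $\chi''_{\mathrm{vec},3}$ in \eqref{sdp5-mod1} and \eqref{sdp5-mod2}), not $\chi_{\mathrm{vec},2}$. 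Your intuition that the rank-$2$ program equals $\omega(G)$ is correct and is exactly Proposition~\ref{prop:chivec2omega}, but it is not the program that arises here.

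The paper then carries out precisely the geometric/orthant analysis you sketch, on the rank-$3$ program, obtaining $\chi'_{\mathrm{vec},3}(G) \leq C\,\omega(G)$ with $C \approx 1.4231$ (Lemma~\ref{lem:chivec_line}) and $\chi''_{\mathrm{vec},3}(G) \leq \omega(G) + 50\,\omega(G)^{5/6}$ (Lemma~\ref{lem:chi-vec-doubleprime-upper}); these yield Theorems~\ref{thm:main_result2} and~\ref{thm:main_result3} via Lemma~\ref{lem:sdp-to-BN}. Crucially, Proposition~\ref{prop:chi-3-lower} exhibits graphs with $\chi'_{\mathrm{vec},3}(G) > 1.04\,\omega(G)$, so no sharpening of that particular inequality can reach $C=1$. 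In short: your plan is the paper's plan, but the rank jump from $2$ to $3$ under the Schur square is precisely the obstruction that keeps this approach from resolving the full conjecture.
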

We observe the ``trivial" bound
$$\lambda_1^2 + \lambda_2^2 \leq \lambda_1^2 + \lambda_2^2 + \cdots + \lambda_n^2 = 2m.$$
So Bollob\'{a}s and Nikiforov's conjecture predicts that we can save a factor of $\frac{1}{\omega}$ over this bound.

This conjecture is motivated by a connection between spectral graph theory and Tur\'{a}n theory. A classical result of Nosal \cite{Nos1970} (see e.g. \cite{nikiforov2002some}) states
that if $G$ is triangle-free, then its spectral radius satisfies $\lambda_1 \le \sqrt{m}.$
Combined with the Rayleigh formula $\lambda_1 \geq 2m/n$, Nosal's theorem implies the foundational result of Mantel (see e.g. \cite{Bol1978}) that if $G$ is triangle-free, then $m\le \lfloor n^2/4\rfloor$.

In 2002, Nikiforov \cite{nikiforov2002some} proved the more general inequality
$$\lambda_1^2 \leq \left(1 - \frac{1}{\omega(G)}\right) 2m.$$
In particular, Nosal's theorem is the special case when $\omega(G) = 2$. By the Rayleigh formula, Nikiforov's theorem strengthens the celebrated Tur\'{a}n's theorem in extremal combinatorics. In 2007, Bollob\'{a}s and Nikiforov proposed their conjecture as a natural strengthening to this inequality.

Recently, this conjecture is emphasized in a list of open problems in spectral graph theory~\cite{liu23unsolved}. It has also been verified in a number of special cases. Ando and Lin \cite{ando2015proof} showed the conjecture for weakly perfect graphs, i.e. those with $\omega(G) = \chi(G)$; and Lin, Ning and Wu \cite{lin2021eigenvalues} showed it for graphs with $\omega(G)=2$, i.e. triangle-free graphs. Shengtong Zhang proved this conjecture for regular graphs \cite{zhang2024first}. Kumar and Pragada~\cite{kumar2024bollob} proved the conjecture for graphs with few triangles, in particular for planar graphs, book--free graphs and cycle--free graphs. Note also that Theorem~\ref{thm:main_result1} proves this conjecture for all non-complete graphs which satisfy $\omega(G) = \chi_{\mathrm{vec}}(G)$. Despite all these results for certain graph classes, there has not been a result applicable for all graphs which provides a saving term of the same order $\Omega_{\omega}(m)$ as Bollob\'{a}s and Nikiforov's Conjecture. For example, in light of the aforementioned construction of Balla, a direct application of Theorem~\ref{thm:main_result1} cannot show anything better than
$$\lambda_1^2 + \lambda_2^2 \leq 2m - \Omega_{\omega}(m^{2/3}).$$
In this paper, we show a weaker form of Conjecture~\ref{conj:bollobas_nikiforov} for all graphs. To our knowledge, this is the first general upper bound on $\lambda_1^2 + \lambda_2^2$ where the saving term over the trivial bound $2m$ is $\Omega_{\omega}(m)$.

\begin{theorem}\label{thm:main_result2}
Let
\[C = 1+\dfrac{\pi^2-4}{\pi^2+4}\approx 1.4231.\]
If $G$ is not the complete graph, then
	\[\lambda_1^2 + \lambda_2^2 \leq \left(1 - \frac{1}{C~\omega(G)}\right) 2m.\]
\end{theorem}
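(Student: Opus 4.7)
The plan is to bound $\lambda_1^2 + \lambda_2^2$ by combining Lemma~\ref{lem:C-S-improved}, which yields a saving of $1/\chi_{\mathrm{vec}}(G)$, with a Motzkin-Straus-type argument yielding a saving of $1/\omega(G)$, then optimizing between them to extract the constant $C$.

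First I would split into cases based on the sign of $\lambda_2$. When $\lambda_1,\lambda_2 \ge 0$, the matrix $X = PAP = \lambda_1 v_1 v_1^T + \lambda_2 v_2 v_2^T$, where $P = v_1 v_1^T + v_2 v_2^T$ is the projector onto the top two-dimensional eigenspace, is PSD of rank at most $2$; applying Lemma~\ref{lem:C-S-improved} with $Y = A$ gives
\[
(\lambda_1^2 + \lambda_2^2)^2 \le \left(1 - \tfrac{1}{\chi_{\mathrm{vec}}(G)}\right)(\lambda_1^2 + \lambda_2^2)(2m),
\]
so $\lambda_1^2 + \lambda_2^2 \le (1 - 1/\chi_{\mathrm{vec}}(G))\cdot 2m$. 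This settles the subcase $\chi_{\mathrm{vec}}(G) \le C\omega(G)$. When $\lambda_2 < 0$, I would bound $\lambda_1^2 + \lambda_2^2 \le \lambda_1^2 + s^-$ and combine Nikiforov's bound $\lambda_1^2 \le (1-1/\omega)\cdot 2m$ with the upper bound on $s^-$ coming from Theorem~\ref{thm:main_result1}, adjusting constants to absorb the loss into $C$.

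For the remaining case $\chi_{\mathrm{vec}}(G) > C\omega(G)$, the approach must go through a direct $\omega$-argument. Nikiforov's proof, via $|u^T A u| \le |u|^T A |u|$ and the classical Motzkin-Straus inequality applied to $|u|^{\circ 2}$, yields for every unit vector $u$ the bound $(u^T A u)^2 \le 2m(1 - 1/\omega)$. Applying this to the parametric family $u_\theta = \cos\theta\, v_1 + \sin\theta\, v_2$, I obtain
\[
(\lambda_1 \cos^2\theta + \lambda_2 \sin^2\theta)^2 \le 2m\left(1 - \tfrac{1}{\omega}\right), \qquad \theta \in [0, 2\pi].
\]
The value $C = 2\pi^2/(\pi^2+4)$, equivalently $1/C = 1/2 + 2/\pi^2$, decomposes naturally as $\mathbb{E}_\theta[\cos^2\theta] + \tfrac{1}{2}(\mathbb{E}_\theta|\cos\theta|)^2$. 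This strongly suggests integrating the Nikiforov-type inequality over $\theta$ against a carefully chosen weight that blends the $L^2$-moment (contributing $1/2$) with the squared $L^1$-moment arising from the absolute-value trick (contributing $2/\pi^2$, the square of $\mathbb{E}|\cos\theta| = 2/\pi$).

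The principal obstacle is executing this interpolation precisely to deliver exactly the constant $C$, rather than some weaker constant like $8/3$ that one obtains by a naive $L^2$ averaging. I expect this to reduce to a small convex-optimization problem in the plane of possible values of $(\lambda_1,\lambda_2)$, where the extremum corresponds to a configuration of $v_1,v_2$ supported on a nearly-clique structure and the $\pi$'s arise from the expectation of $|\cos\theta|$ versus $\cos^2\theta$ over the unit circle. The case analysis at the threshold $\chi_{\mathrm{vec}}(G) = C\omega(G)$ must be calibrated so that the SDP-side bound of paragraph~2 and the angle-averaging bound of paragraph~3 match at the boundary.
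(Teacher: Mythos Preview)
Your proposal has a genuine gap in the ``hard'' case $\chi_{\mathrm{vec}}(G) > C\,\omega(G)$, and the case split on $\chi_{\mathrm{vec}}$ does not help. The family of inequalities you write down,
\[
(\lambda_1\cos^2\theta + \lambda_2\sin^2\theta)^2 \le 2m\Big(1-\tfrac{1}{\omega}\Big),\qquad \theta\in[0,2\pi],
\]
is entirely dominated by the single instance $\theta=0$, which is Nikiforov's bound $\lambda_1^2\le 2m(1-1/\omega)$. Every member of the family is a consequence of that one, so no weighting or averaging over $\theta$ can extract information about $\lambda_2$; the best you can ever get this way is $\lambda_1^2+\lambda_2^2\le 2\lambda_1^2\le 4m(1-1/\omega)$, which is weaker than the trivial bound $2m$. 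The numerological observation $1/C = \tfrac12 + \tfrac{2}{\pi^2}$ is appealing but does not correspond to any mechanism in your sketch. (Also, the $\lambda_2<0$ subcase never occurs: if $G$ is not complete it has two non-adjacent vertices, and interlacing forces $\lambda_2\ge 0$.)

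The paper's route is structurally different. It sets $\xX=\lambda_1\vv_1\vv_1^\T+\lambda_2\vv_2\vv_2^\T$ and, via Cauchy--Schwarz, reduces Theorem~\ref{thm:main_result2} to a bound on the rank-restricted program $\chi'_{\mathrm{vec},3}$ (Lemma~\ref{lem:sdp-to-BN}): it suffices to show $\langle \aA,\xX\circ\xX\rangle\le (1-\tfrac{1}{C\omega})\langle \jJ,\xX\circ\xX\rangle$. Since $\xX$ has rank $2$, each vertex $i$ carries a planar vector $\vv_i=r_i e^{\ii\theta_i}$. For each angle $\alpha$, the vertices are split into two sets according to whether $\theta_i$ lies in one of two opposite quadrants or their complement; on each piece the Gram matrix is completely positive and Motzkin--Straus applies. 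The constant $C=2\pi^2/(\pi^2+4)$ arises by \emph{averaging over the partition parameter $\alpha$}, computed via a short Fourier argument (Lemma~\ref{lem:partition} and Lemma~\ref{lem:partition-inequality}). In short, the averaging that produces $C$ is over random partitions of the vertex set, not over rotations of a test vector in the top eigenspace; this is the missing idea in your plan.
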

Furthermore, we are able to replace $C~\omega(G)$ with $(1 + o(1)) \omega(G)$, giving an asymptotic improvement for graphs with large clique numbers.
\begin{theorem}\label{thm:main_result3}
If $G$ is not the complete graph, then
	\[\lambda_1^2 + \lambda_2^2 \leq \left(1 - \frac{1}{\omega(G) + 50 \omega(G)^{5/6}}\right) 2m.\]
\end{theorem}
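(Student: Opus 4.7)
My plan is to bootstrap the argument for Theorem \ref{thm:main_result2} with an $\omega$-dependent refinement that replaces the absolute constant $C\approx 1.42$ by $1+O(\omega(G)^{-1/6})$. The backbone remains Lemma \ref{lem:C-S-improved}: I would apply it to the rank-$2$ PSD matrix $\xX = \alpha\,u_1u_1^T + \beta\,u_2 u_2^T$ built from the top two eigenvectors of $\aA$ and to $\yY=\aA$ (absorbing a sign into $u_2$ if $\lambda_2<0$). Choosing $(\alpha,\beta)\propto(\lambda_1,|\lambda_2|)$ makes the Lemma output
\[
\lambda_1^2 + \lambda_2^2 \;\leq\; \Bigl(1 - \frac{1}{\chi_{\mathrm{vec}}(G')}\Bigr)\,2m,
\]
where $G'$ is the subgraph of $G$ whose edges are the support of $\xX\circ\aA$. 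The theorem then reduces to establishing the structural estimate $\chi_{\mathrm{vec}}(G') \leq \omega(G) + 50\,\omega(G)^{5/6}$ in this rank-$2$ setting.

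I would begin with a dichotomy on the ratio $|\lambda_2|/\lambda_1$. When $|\lambda_2|^2 \leq 50\,\lambda_1^2/\omega(G)^{7/6}$, the inequality $\lambda_1^2\leq(1-1/\omega(G))\cdot 2m$ of Nikiforov combined with a one-line computation already gives the theorem, since the extra $\lambda_2^2$ fits within the additive $50\,\omega(G)^{5/6}$ slack in the denominator. The remaining (and main) case is $|\lambda_2|\asymp\lambda_1$, in which the rank-$2$ structure of $\xX$ becomes genuinely exploitable.

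In this regime I would construct an explicit feasible solution $\zZ$ to the SDP \eqref{sdp1} for $G'$ that certifies the target bound on $\chi_{\mathrm{vec}}(G')$. Since $\xX = \sum_i v_i v_i^T$ with $v_i\in\mathbb{R}^2$, each vertex acquires an angle $\theta_i\in[0,2\pi)$; edges of $G'$ can only join vertices whose vectors are non-perpendicular. I would partition the circle into $K := \lceil\omega(G)+50\,\omega(G)^{5/6}\rceil$ arcs of a carefully tuned width $\delta$ and assemble $\zZ$ from weighted rank-one blocks indexed by these arcs. Feasibility rests on a pigeonhole observation: if a single narrow arc contained too many vertices, their pairwise dot products under $\xX$ would force a clique of $G$ of size larger than $\omega(G)$, contradicting the clique constraint.

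The main difficulty is calibrating the arc width $\delta$ against the local clique budget. A narrower $\delta$ inflates the count $K$, while a wider $\delta$ admits hidden near-cliques that violate $\omega(G)$. Balancing the two at $\delta = \Theta(\omega(G)^{-1/6})$ produces the exponent $5/6$ and, after a careful numerical optimization, the constant $50$. The subtlest step is verifying that the constructed $\zZ$ remains simultaneously doubly nonnegative and supported within $E(G')\cup\mathrm{diag}$; this likely demands an averaging or convexification refinement on top of the naive partition, and is where I expect most of the technical bookkeeping to lie.
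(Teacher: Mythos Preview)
Your reduction step is fine up to the displayed inequality
\[
\lambda_1^2+\lambda_2^2 \le \Bigl(1-\tfrac{1}{\chi_{\mathrm{vec}}(G')}\Bigr)2m,
\]
but the plan collapses at the next step, and the gap is not just bookkeeping.

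\textbf{First gap: you cannot bound $\chi_{\mathrm{vec}}(G')$.} Once you pass to the full vector chromatic number of $G'$ you have thrown away the only leverage you had, namely that the specific matrix $\xX\circ\xX$ you care about has rank $\le 3$ and is the Schur square of a rank-$2$ PSD matrix. The quantity $\chi_{\mathrm{vec}}(G')$ maximizes over \emph{all} doubly nonnegative $\zZ$, and there is no reason for it to be within $O(\omega^{5/6})$ of $\omega(G)$; the graph $G'$ is just a subgraph of $G$ determined by which entries of $\xX$ vanish, and subgraphs of $G$ can have $\chi_{\mathrm{vec}}$ arbitrarily far from $\omega(G)$. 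The paper avoids this by introducing the rank-restricted program $\chi''_{\mathrm{vec},3}$ and proving Lemma~\ref{lem:chi-vec-doubleprime-upper} for \emph{that} program; your $\xX$ is feasible for it by construction, so bounding the restricted program suffices.

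\textbf{Second gap: the arc construction is wrong in two ways.} A feasible $\zZ$ for the SDP \eqref{sdp1} certifies a \emph{lower} bound on $\chi_{\mathrm{vec}}(G')$, not the upper bound you need. More fundamentally, the pigeonhole step is false: vertices whose vectors $v_i$ fall in a common narrow arc have $\xX_{ij}>0$, but nothing forces them to be adjacent in $G$, so no clique is produced. (In fact vertices in the same arc that \emph{are} adjacent in $G$ are also adjacent in $G'$, so the arcs are not independent sets either, ruling out the coloring interpretation.)

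\textbf{What the paper does instead.} The paper passes to the subgraph $G'$ on edges with $\xX_{ij}\ge 0$ (not $\xX_{ij}\neq 0$), so that the extra constraint $\xX_{G'}\ge 0$ of $\chi''_{\mathrm{vec},3}$ is satisfied. The point of this constraint is that any pair $(i,j)$ with $\cos(\theta_i-\theta_j)<-\epsilon$ is automatically a non-edge. If the non-edge mass is tiny, a short geometric argument (Lemma~\ref{lem:main-cluster}) forces almost all the $r_i^2$-mass into a single arc of length $\pi/2+2\epsilon$. On such an arc the Gram matrix is nearly completely positive, and a perturbed Motzkin--Straus inequality (Lemma~\ref{lem:MS-almost}) yields the $1/\omega$ bound up to a $(1+O(\sqrt{\epsilon}))$ factor. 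Balancing $\epsilon=\omega^{-1/3}$ gives the exponent $5/6$. So the correct geometry is not ``partition the circle into $\sim\omega$ arcs'' but ``use the sign constraint to show the mass lives in \emph{one} slightly-too-wide quadrant, then round it into an honest quadrant.''
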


\subsection{Proof Strategy}

Our proof of Theorem~\ref{thm:main_result1} utilizes a carefully designed SDP \eqref{sdp2} for the vector chromatic number. We do not expect this semidefinite program for $\chi_{\mathrm{vec}}(G)$ to be unknown, but we could not find a reference for it. We substitute an appropriate matrix into this program to obtain our strengthening of the Cauchy-Schwarz inequality. Finally, we use a spectral decomposition argument similar to Ando-Lin's proof in \cite{ando2015proof} to conclude Theorem~\ref{thm:main_result1}.

To adapt this strategy to Theorem~\ref{thm:main_result2} and \ref{thm:main_result3}, our main idea is to add a low-rank restriction to the SDP \eqref{sdp2} defining $\chi_{\mathrm{vec}}(G)$. This suits our purpose because we are only interested in the top two eigenvalues of $G$. We realize the low-rank matrices in the programs as the Gram matrix of a family of low-dimensional vectors, and employ some geometrical arguments on these vectors. Thus, we show that the value of our modified SDP is a function of $\omega(G)$ (Lemma~\ref{lem:main2}), leading us to a weaker version of Theorem~\ref{thm:main_result2} with $C = 4$. To obtain the full Theorem~\ref{thm:main_result2} and \ref{thm:main_result3}, we consider further modified versions of the SDP introduced in \eqref{sdp2}, proving tighter bounds on the values of these modifications. We delay the precise definitions of these programs to Section~\ref{sec:bounded}.

Most of our work connects to the completely positive formulation of $\omega(G)$ due to De Klerk and Pasechnik \cite{klerkpasechnik}, in particular, it is easy to see that a rank $2$ constraint to the SDP in \eqref{sdp2} gives an equivalent optimization to it (see Proposition~\ref{prop:chivec2omega}). In Section~\ref{sec:vertexweights} we show how some of our results apply to the vertex-weighted version of $\omega(G)$, obtaining in a natural way the known vertex-weighted version of the Motzkin-Straus quadratic programming formulation, due to \cite{gibbons1997continuous}.

\subsection{Organization}
Section 2 is devoted to the proof of Theorem~\ref{thm:main_result1}. In Subsection~\ref{sec:bounded} we provide a short proof of Theorem~\ref{thm:main_result2} with the weaker constant $C=2$ . We prove it completely in Subsection~\ref{sec:constant} with more technical machinery. In Subsection~\ref{sec:sublinear}, we prove Theorem~\ref{thm:main_result3}. In Subsection~\ref{sec:counterexamples}, we discuss the limitations of our technique by proving some non-trivial lower bounds for our SDPs. In Subsection~\ref{sec:quickregular} we present a shorter proof for the regular graph case of the Bollobás-Nikiforov conjecture. Finally, in Section~\ref{sec:vertexweights}, we study vertex-weighted version of our results.

\subsection{Quick reference for notations}
\begin{enumerate}
    \item $G$ denotes a simple graph with vertex set $V$ and edge set $E$, $n$ denotes its number of vertices and $m$ its number of edges.
    \item $\aA$ is the adjacency matrix of $G$.
    \item $\ov{\aA}$ is the adjacency matrix of the complement of $G$.
    \item $\iI$ is the identity matrix, and $\jJ$ is the all-one matrix. 
    \item $\norm{\cdot}$ denotes the Frobenius norm $\lVert \mM \rVert = \sqrt{\tr(\mM^2)}$ on the space of real symmetric matrices of a given dimension. $\langle \cdot, \cdot\rangle$ stands for the corresponding inner product $\langle \aA, \bB\rangle = \tr(\aA \bB)$.
    \item For matrices $\xX$ and $\yY$ of the same dimension, we write $\xX \circ \yY$ as their entrywise product (also known as Schur product or Hadamard product).
    \item For a matrix $\xX$, we write $\xX \geq 0$ if all the entries of $\xX$ are non-negative, and $\xX \succeq 0$ if $\xX$ is a symmetric and positive semidefinite matrix.
\end{enumerate}
We suppress dependence on $G$ when there is no ambiguity.


\section{Wocjan-Elphick-Anekstein conjecture} \label{sec:2}

In this section we prove Theorem~\ref{thm:main_result1}.

Our proof for Theorem~\ref{thm:main_result1} is simpler than the argument due to Ando and Lin \cite{ando2015proof} which was used to prove a lower bound to the chromatic number, and which was also applied in Guo and Spiro \cite{guo2022new} to improve the result for the fractional chromatic number instead. We are able to devise a shortcut which ends up highlighting a strengthened version of the Cauchy-Schwarz inequality for a wide class of cases.

As before, recall that $\aA$ and $\overline{\aA}$ denote the adjacency matrices of $G$ and its complement. Observe that $\jJ = \iI + \aA + \overline{\aA}$.

\begin{lemma} \label{lem:chivec}
    Let $\zZ$ be a square matrix indexed by the vertices of a graph $G$. Assume $\zZ \geq \0$ and $\zZ \succeq \0$. Then
    \[
    \langle \aA,\zZ \rangle \leq \left(1-\dfrac{1}{\chi_{\mathrm{vec}}(G)}\right) \langle \jJ, \zZ \rangle.
    \]
\end{lemma}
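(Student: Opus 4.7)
The plan is to establish the stronger matrix identity
\[
\chi_{\mathrm{vec}}(G)\,(\iI + \overline{\aA}) - \jJ = S + N,
\]
where $S \succeq 0$ and $N \geq 0$. Taking the Frobenius inner product with $\zZ$ (which is both positive semidefinite and entrywise nonnegative), the terms $\langle S, \zZ\rangle$ and $\langle N, \zZ\rangle$ are both nonnegative, giving $\langle \jJ, \zZ\rangle \leq \chi_{\mathrm{vec}}(G)\,\langle \iI + \overline{\aA}, \zZ\rangle$. Since $\jJ = \iI + \aA + \overline{\aA}$, this rearranges immediately to the lemma.

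To obtain the decomposition, I would compute the conic dual of \eqref{sdp1}: minimize $t$ over a symmetric matrix $Y$ supported on non-edge off-diagonal positions, subject to $t\iI + Y - \jJ = S + N$ with $S \succeq 0$ and $N \geq 0$. Strong duality holds after a mild reformulation: restricting the primal to the subspace of matrices supported on $\mathrm{diag}\cup E$ restores Slater's condition (witnessed, for example, by $\tfrac{1}{n}\iI + \varepsilon\aA$ for $\varepsilon>0$ small), so the dual attains the value $\chi_{\mathrm{vec}}(G)$ at some triple $(Y^*, S^*, N^*)$, and the diagonal of the feasibility identity gives $S^*_{ii} \leq \chi_{\mathrm{vec}}(G)-1$. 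The crucial step is then a rebalancing: if $N^*_{ij}>0$ at some non-edge $ij$, decrease $N^*_{ij}$ and $Y^*_{ij}$ simultaneously by the same amount, preserving the identity $Y^*_{ij} - 1 = S^*_{ij} + N^*_{ij}$. After this adjustment $N^*$ vanishes on non-edge off-diagonals, so $Y^*_{ij} = 1 + S^*_{ij}$, and the PSD bound $|S^*_{ij}| \leq \sqrt{S^*_{ii}S^*_{jj}} \leq \chi_{\mathrm{vec}}(G)-1$ yields $Y^*_{ij} \leq \chi_{\mathrm{vec}}(G)$ on every non-edge. Consequently $\chi_{\mathrm{vec}}(G)\overline{\aA} - Y^*$ is entrywise nonnegative, and the dual equation rearranges to
\[
\chi_{\mathrm{vec}}(G)\,(\iI + \overline{\aA}) - \jJ = S^* + \bigl(N^* + \chi_{\mathrm{vec}}(G)\overline{\aA} - Y^*\bigr),
\]
which is the sought PSD-plus-nonnegative decomposition.

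I expect the main obstacle to be verifying strong duality for \eqref{sdp1}: the primal constraint $\zZ\circ\overline{\aA} = 0$ rules out strict feasibility in the full doubly nonnegative cone, so the subspace reformulation must be set up explicitly. Once strong duality gives us the optimal dual triple, the rebalancing of $N^*$ against $Y^*$ and the off-diagonal PSD bound on $S^*$ are both routine, and this is what delivers the essential inequality $Y^*_{ij}\leq\chi_{\mathrm{vec}}(G)$ that the decomposition hinges on.
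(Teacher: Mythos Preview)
Your approach is correct but genuinely different from the paper's. The paper works entirely on the primal side: it introduces the relaxed program \eqref{sdp2} (same objective and cone, but the hard constraint $\zZ\circ\overline{\aA}=0$ is replaced by $\langle \iI+\overline{\aA},\zZ\rangle=1$) and shows its optimum still equals $\chi_{\mathrm{vec}}(G)$ by an explicit perturbation. Given an optimum $\zZ_0$ of \eqref{sdp2}, the paper sets
\[
\widetilde{\zZ}_0 = \zZ_0 + \sum_{uv\in E(\overline{G}),\,(\zZ_0)_{uv}>0} (\zZ_0)_{uv}\,(\ee_u-\ee_v)(\ee_u-\ee_v)^\T,
\]
which kills the $\overline{\aA}$-entries, preserves double nonnegativity, and keeps both $\langle \iI+\overline{\aA},\,\cdot\,\rangle$ and $\langle\jJ,\,\cdot\,\rangle$ unchanged, so $\widetilde{\zZ}_0$ is feasible for \eqref{sdp1} with the same value. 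No duality theory is invoked.

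Your route is the dual certificate version of the same fact: you are showing that $\chi_{\mathrm{vec}}(G)(\iI+\overline{\aA})-\jJ$ lies in the dual of the doubly nonnegative cone, which is exactly dual feasibility for \eqref{sdp2} at value $\chi_{\mathrm{vec}}(G)$. Your rebalancing of $N^*$ against $Y^*$ on non-edges and the PSD off-diagonal bound $|S^*_{ij}|\le\sqrt{S^*_{ii}S^*_{jj}}\le\chi_{\mathrm{vec}}(G)-1$ are sound, and they do yield $Y^*_{ij}\le\chi_{\mathrm{vec}}(G)$, hence the decomposition. The one place that needs care is, as you say, dual attainment for \eqref{sdp1}; the subspace reformulation you sketch (working in the space of matrices supported on $\iI+\aA$, where $\tfrac{1}{n}\iI+\varepsilon\aA$ is a Slater point) does the job, and alternatively one can simply cite known strong duality for the Schrijver $\vartheta'$ SDP. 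Compared to the paper's argument, yours is more conceptual but heavier on conic-duality bookkeeping; the paper's primal perturbation is shorter, avoids duality entirely, and has the bonus that the same trick is reused verbatim for the vertex-weighted generalization in Section~\ref{sec:vertexweights}.
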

\begin{proof}
First notice that
\[\langle \aA,\zZ \rangle \leq \left(1-\dfrac{1}{\chi_{\mathrm{vec}}(G)}\right) \langle \jJ, \zZ \rangle\iff \langle \jJ, \zZ\rangle\leq \chi_{\mathrm{vec}}(G) \langle \iI + \overline{\aA}, \zZ\rangle.\]

We claim that the inequality above holds for every $\zZ\geq \0$, $\zZ\succeq \0$. This follows from the stronger claim that $\chi_{\mathrm{vec}}(G)$ is equal to the following semidefinite program:

\begin{align} \max \quad & \langle \jJ, \zZ\rangle \label{sdp2}\\
\textrm{subject to} \quad & \langle \iI+ \overline{\aA}, \zZ \rangle = 1 \nonumber \\
& \zZ\geq \0, \, \zZ\succeq \0. \nonumber
\end{align}
Every feasible solution of \eqref{sdp1} is clearly a feasible solution for \eqref{sdp2} with the same objective value. Thus $\chi_{\mathrm{vec}}(G)$ is smaller than or equal to the optimum of \eqref{sdp2}.

On the other hand, if $\zZ_{0}$ is an optimum solution for \eqref{sdp2}, let $S$ denote the set of edges $uv$ in $E(\overline{G})$ for which $(\zZ_0)_{uv} > 0$, and $e_u$ the characteristic vector of vertex $u$. Then the perturbation of $\zZ_0$ defined by
\[
\widetilde{\zZ}_0:=\zZ_0 + \displaystyle\sum_{uv\in S}(\zZ_0)_{uv}(\ee_u-\ee_v)(\ee_u-\ee_v)^\T
\]
satisfies
\[
\widetilde{\zZ}_0 \geq \0, \ \widetilde{\zZ}_0 \succeq \0, \  \widetilde{\zZ}_0 \circ \overline{\aA} = \0, \ \langle \widetilde{\zZ}_0,\iI\rangle =  \langle \zZ_0,\iI+\overline{\aA}\rangle = 1,\ \text{and} \ \langle \widetilde{\zZ}_0,\jJ\rangle =  \langle \zZ_0,\jJ\rangle,
\]
therefore $\widetilde{\zZ}_0$ is a feasible solution for \eqref{sdp1} with the same objective value as $\zZ_0$ in \eqref{sdp2}.
\end{proof}
Using this SDP, we prove our strengthening of the Cauchy-Schwarz inequality.
\begin{proof}[Proof of Lemma~\ref{lem:C-S-improved}]
Denote $\overline{\xX} = \xX \circ \aA$ and $\overline{\yY} = \yY \circ \aA$. Since $\aA$ has the same support as $\xX \circ \yY$, we have $\langle \xX, \yY\rangle = \langle \ov{\xX}, \ov{\yY}\rangle$. By the usual Cauchy-Schwarz inequality for matrices, we have 
\[\langle \xX, \yY\rangle^2 = \langle \ov{\xX}, \ov{\yY}\rangle^2 \leq \langle \ov{\xX},  \ov{\xX} \rangle \langle \ov{\yY}, \ov{\yY} \rangle.\]
The first term can be simplified as
\[\langle \ov{\xX},  \ov{\xX} \rangle = \langle \aA\circ \xX,\aA \circ \xX\rangle= \langle \aA,\xX \circ \xX\rangle.\] 
Clearly, we have $\xX\circ \xX\geq \0$. Furthermore, by Schur's product theorem, we have $\xX\circ \xX\succeq \0$. By Lemma~\ref{lem:chivec} we also have
\[\langle \aA, \xX\circ \xX\rangle\leq \left(1-\dfrac{1}{\chi_{\mathrm{vec}}(G)}\right)\langle \jJ,\xX \circ \xX\rangle = \left(1-\dfrac{1}{\chi_{\mathrm{vec}}(G)}\right)\langle \xX,\xX\rangle.\]
Note also that $\langle \ov{\yY}, \ov{\yY} \rangle \leq \langle \yY, \yY \rangle$.
The lemma follows by combining the three inequalities. 
\end{proof}
In particular, we take $\yY = \aA$ in the lemma. Observe that $\langle \aA, \aA \rangle = 2m$. Thus, we obtain the following graph-theoretic corollary.
\begin{corollary}\label{cor:CS_chivec} Let $G$ be a graph and $\xX$ be any PSD matrix. Then, 
\[\langle \aA, \xX\rangle^2\leq \left(1-\dfrac{1}{\chi_{\mathrm{vec}}(G)}\right)2m\,\langle \xX, \xX\rangle. \]
\end{corollary}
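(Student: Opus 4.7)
The plan is to derive the corollary as a direct specialization of Lemma~\ref{lem:C-S-improved}, with the choice $\yY = \aA$. Indeed, the paragraph immediately preceding the corollary advertises exactly this substitution, and the identity $\langle \aA, \aA\rangle = \tr(\aA^2) = 2m$ produces the factor $2m$ on the right-hand side.

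In more detail, I would first observe that $\aA$ is symmetric with zero diagonal, so it is a legitimate choice for $\yY$ in the lemma. Substituting $\yY = \aA$ into \eqref{cs} yields
\[
\langle \xX, \aA\rangle^{2} \leq \left(1 - \frac{1}{\chi_{\mathrm{vec}}(H)}\right) \langle \xX, \xX\rangle \cdot 2m,
\]
where $H$ is the graph whose edge set is the support of the (symmetric) $0$-$1$ pattern of $\xX \circ \aA$. Because the support of $\xX \circ \aA$ is contained in the support of $\aA$, the graph $H$ is a subgraph of $G$ on the same vertex set.

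To finish, I would invoke monotonicity of the vector chromatic number under taking subgraphs: passing from $H$ to $G$ only relaxes the constraint $\zZ \circ \overline{\aA} = \0$ in the SDP~\eqref{sdp1}, so the feasible region grows, and hence $\chi_{\mathrm{vec}}(H) \leq \chi_{\mathrm{vec}}(G)$. This gives $1 - 1/\chi_{\mathrm{vec}}(H) \leq 1 - 1/\chi_{\mathrm{vec}}(G)$, and plugging this into the displayed inequality produces exactly the corollary. I do not anticipate any significant obstacle: the statement is essentially a clean specialization of Lemma~\ref{lem:C-S-improved}, and the only minor technicality is the subgraph monotonicity step. The degenerate case $\xX \circ \aA \equiv \0$ is trivial since the left-hand side then vanishes.
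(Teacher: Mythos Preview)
Your proposal is correct and follows the same approach as the paper, which simply substitutes $\yY = \aA$ into Lemma~\ref{lem:C-S-improved} and uses $\langle \aA, \aA\rangle = 2m$. You are in fact slightly more careful than the paper: you explicitly handle the subgraph $H$ arising from the support of $\xX \circ \aA$ and invoke monotonicity of $\chi_{\mathrm{vec}}$, a point the paper's one-line derivation glosses over (though it is implicit in the proof of Lemma~\ref{lem:C-S-improved}, where Lemma~\ref{lem:chivec} can be applied directly with the adjacency matrix of $G$ itself).
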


Note also that the same conclusion of the corollary holds if we instead choose $Y=\hat\aA$ in the Lemma~\ref{lem:C-S-improved} for a signed adjacency matrix $\hat\aA$.

Theorem~\ref{thm:main_result1} now follows by a spectral decomposition argument. Recall that $\aA$ admits a spectral decomposition
$$\aA = \sum_{\lambda} \lambda \eE_{\lambda}$$
where $\lambda$ ranges over the eigenvalues of $\aA$, and $\eE_{\lambda}$ is the projection matrix onto the $\lambda$-eigenspace of $\aA$. To study the sums of squares of positive and negative eigenvalues, we introduce the Jordan decomposition~\cite{BhatiaMatrixAnalysis}
$$\aA = \aA^+ - \aA^-$$
where $\aA^+$ and $\aA^-$ are PSD matrices defined by
\[
\aA^+ = \sum_{\lambda > 0} \lambda \eE_\lambda \quad \text{and} \quad \aA^- = - \sum_{\lambda< 0} \lambda \eE_\lambda.
\]
Then we have
\[
s^+ = \lVert \aA^+ \rVert^2 \quad \text{and} \quad s^- = \lVert \aA^- \rVert^2.
\]

\begin{proof}[Proof of Theorem \ref{thm:main_result1}] We apply Corollary~\ref{cor:CS_chivec} to $\xX=\aA^+$. Observing that $\aA\aA^+=(\aA^+)^2$, we have
\[\langle \aA^+, \aA^+\rangle^2=\langle \aA, \aA^+\rangle^2\leq \left(1-\dfrac{1}{\chi_{\mathrm{vec}}(G)}\right)2m\,\langle \aA^+, \aA^+\rangle,\]
so 
\[s^+=\langle \aA^+, \aA^+\rangle\leq \left(1-\dfrac{1}{\chi_{\mathrm{vec}}(G)}\right)2m.\]
Recalling that $s^+ + s^- = 2m$, it follows that:
\[\dfrac{2m}{\chi_{\mathrm{vec}}(G)} \leq s^-.\]
The proof for $s^+$ is analogous and obtained upon making $\xX=\aA^-$.
\end{proof}

Again, it is worthwhile to notice that we may derive a similar result for signed graphs.

\begin{corollary}
	For every graph with $m$ edges, and every signed adjacency matrix $\hat \aA$, if $s^+$ and $s^-$ are respectively the sums of the squares of the positive and the negative eigenvalues of $\hat \aA$, then
	\[
		\min\{s^+,s^-\} \geq \frac{2m}{\chi_{\mathrm{vec}}(G)}.
	\]
\end{corollary}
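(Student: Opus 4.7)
The plan is to observe that the proof of Theorem~\ref{thm:main_result1} transcribes almost verbatim to the signed setting, leveraging the remark stated immediately after Corollary~\ref{cor:CS_chivec}. The key point is that Lemma~\ref{lem:C-S-improved} was proved for any symmetric matrix $\yY$ with zero diagonal, not just $\yY = \aA$.

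First I would upgrade Corollary~\ref{cor:CS_chivec} to the signed setting: for any PSD matrix $\xX$ and any signed adjacency matrix $\hat\aA$ of $G$,
\[
\langle \hat\aA, \xX\rangle^2 \leq \left(1-\dfrac{1}{\chi_{\mathrm{vec}}(G)}\right) 2m\,\langle \xX, \xX\rangle.
\]
Apply Lemma~\ref{lem:C-S-improved} with $\yY = \hat\aA$. The underlying graph built in the lemma is determined by the support of $\xX \circ \hat\aA$, which is contained in $E(G)$; call this subgraph $G'$. Inspecting the SDP \eqref{sdp1}, adding edges to the graph only weakens the constraint $\zZ \circ \overline{\aA} = \0$ (since $\overline{\aA}$ loses nonzero entries), so the feasible set grows and $\chi_{\mathrm{vec}}$ is monotone under subgraph inclusion. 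Hence $\chi_{\mathrm{vec}}(G') \leq \chi_{\mathrm{vec}}(G)$. Combined with $\langle \hat\aA,\hat\aA\rangle = 2m$ (each edge contributes $(\pm 1)^2 = 1$ twice), the inequality above follows.

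Next I would run the spectral decomposition step of Theorem~\ref{thm:main_result1} on $\hat\aA$. Let $\hat\aA = \hat\aA^+ - \hat\aA^-$ be the Jordan decomposition into PSD parts. Since $\hat\aA$ is symmetric, the spectral calculus still gives $\hat\aA\,\hat\aA^+ = (\hat\aA^+)^2$, and therefore $\langle \hat\aA, \hat\aA^+\rangle = \langle \hat\aA^+, \hat\aA^+\rangle = s^+$. Plugging $\xX = \hat\aA^+$ into the upgraded corollary gives
\[
(s^+)^2 \leq \left(1-\dfrac{1}{\chi_{\mathrm{vec}}(G)}\right) 2m\cdot s^+,
\]
so $s^+ \leq (1 - 1/\chi_{\mathrm{vec}}(G))\,2m$. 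Since $s^+ + s^- = \lVert \hat\aA \rVert^2 = 2m$, rearranging yields $s^- \geq 2m/\chi_{\mathrm{vec}}(G)$; the symmetric argument with $\xX = \hat\aA^-$ yields the bound on $s^+$. There is no real obstacle here: the only substantive check is the subgraph monotonicity of $\chi_{\mathrm{vec}}$, which is immediate from the primal SDP, and every other step is a direct transcription of the unsigned proof.
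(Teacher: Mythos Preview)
Your proposal is correct and follows exactly the route the paper indicates: the remark after Corollary~\ref{cor:CS_chivec} already asserts that the corollary holds with $\hat\aA$ in place of $\aA$, and the spectral decomposition argument of Theorem~\ref{thm:main_result1} then transfers verbatim. The only detail you make explicit that the paper leaves implicit is the monotonicity $\chi_{\mathrm{vec}}(G')\leq\chi_{\mathrm{vec}}(G)$ for the subgraph $G'$ supporting $\xX\circ\hat\aA$; this is fine, though one can also bypass it by applying Lemma~\ref{lem:chivec} directly with the original $G$, since the support of $\hat\aA$ is contained in that of $\aA$.
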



\section{Bollob\'{a}s-Nikiforov conjecture} \label{sec:3}

In this section, we study Conjecture~\ref{conj:bollobas_nikiforov}.

Our starting point is a completely positive program whose value is the clique number $\omega(G)$. The cone of completely positive matrices is defined as
\[
	\mathcal{CP}_n := \left\{\mM \in \Rds^{n\times n}: \mM = \sum_{i = 1}^k \xx_i \xx_i^\T , \xx_i \in \Rds^n , \xx_i \geq \0\right\}
\]

Extending the celebrated Motzkin-Straus inequality \cite{MotzkinStraus}, De Klerk and Pasechnik \cite{klerkpasechnik} proved that
\begin{align} \omega(G) =\hspace{2cm} \max \quad & \langle \jJ, \zZ\rangle, \label{sdp1}\\
\textrm{subject to} \quad & \langle \iI+\ov{\aA}, \zZ\rangle = 1, \nonumber\\
& \zZ\in \mathcal{CP}_n. \nonumber
\end{align}
The resemblance with the SDP \eqref{sdp2} is evident: the set of doubly nonnegative matrices forms a cone that contains the cone $\mathcal{CP}_n$, so we naturally manifest the inequality $\omega(G) \leq \chi_{\mathrm{vec}}(G)$. At this point it is worth pointing out that replacing the cone by the cone of positive semidefinite matrices \textsl{does not} make the optimum equal to $\vartheta(\ov{G})$. In fact, the following modification of program \eqref{sdp1} 
\begin{align} \hspace{2cm} \max \quad & \langle \jJ, \zZ\rangle, \nonumber \\
\textrm{subject to} \quad & \langle \iI+\ov{\aA}, \zZ\rangle = 1, \nonumber\\
& \zZ \succeq 0. \nonumber
\end{align}
is unbounded unless all the eigenvalues of $\ov{\aA}$ are at least $-1$, which occurs only if $\bar{G}$ is a disjoint union of complete graphs. 

A simple manipulation (as was done in Lemma~\ref{lem:chivec}) leads to the following inequality for all $\zZ \in \mathcal{CP}_n$:
\begin{align} \label{eq:ms}
	\langle \aA,\zZ \rangle \leq \left(1-\dfrac{1}{\omega(G)}\right) \langle \jJ, \zZ \rangle.
\end{align}

In the proof of Corollary~\ref{cor:CS_chivec}, we used the fact that if $\xX \succeq \0$ then $\xX \circ \xX$ is doubly nonnegative. Here we use the fact that if $\xX \in \mathcal{CP}_n$, then $\xX \circ \xX \in \mathcal{CP}_n$. This is quite easy to see, as $(\xx\xx^\T) \circ (\yy\yy^\T) = (\xx \circ \yy)(\xx \circ \yy)^\T$. Thus, we obtain an analog of Corollary~\ref{cor:CS_chivec} for completely positive matrices.

\begin{corollary}\label{cor:CS_omega} Let $G$ be a graph and assume that $\xX \in \mathcal{CP}_n$. Then, 
\[\langle \aA, \xX\rangle^2\leq \left(1-\dfrac{1}{\omega(G)}\right)2m\,\langle \xX, \xX\rangle. \]
\end{corollary}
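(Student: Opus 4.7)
The plan is to mirror exactly the proof of Corollary~\ref{cor:CS_chivec}, swapping the role of the doubly nonnegative cone for the completely positive cone $\mathcal{CP}_n$. The two ingredients the authors have just flagged are (i) the completely positive version of the Motzkin--Straus inequality stated in \eqref{eq:ms}, and (ii) the closure of $\mathcal{CP}_n$ under the Schur product, observed via $(\xx\xx^\T)\circ(\yy\yy^\T) = (\xx\circ\yy)(\xx\circ\yy)^\T$. So the entire argument will proceed by rerunning Lemma~\ref{lem:C-S-improved} with $\yY = \aA$, but accessing \eqref{eq:ms} in place of Lemma~\ref{lem:chivec}.

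First I would set $\ov{\xX} = \xX \circ \aA$ and note that, because $\aA$ is the $(0,1)$-matrix encoding the support of the edges of $G$ (so in particular it shares the support of $\xX \circ \aA$ on off-diagonal entries and has zero diagonal), we get $\langle \aA, \xX\rangle = \langle \ov{\xX}, \aA\rangle$. Applying the ordinary Cauchy--Schwarz inequality for the Frobenius inner product yields
\[
\langle \aA, \xX\rangle^2 \;=\; \langle \ov{\xX}, \aA\rangle^2 \;\leq\; \langle \ov{\xX}, \ov{\xX}\rangle \cdot \langle \aA, \aA\rangle \;=\; 2m\,\langle \ov{\xX}, \ov{\xX}\rangle.
\]

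Next I would simplify the Frobenius norm factor by the identity $\langle \aA \circ \xX, \aA \circ \xX\rangle = \langle \aA, \xX \circ \xX\rangle$, which is the same manipulation used in the proof of Lemma~\ref{lem:C-S-improved}. Since $\xX \in \mathcal{CP}_n$, the Schur-product observation above gives $\xX \circ \xX \in \mathcal{CP}_n$, so that inequality \eqref{eq:ms} applies to $\zZ = \xX \circ \xX$ and produces
\[
\langle \aA, \xX \circ \xX\rangle \;\leq\; \left(1 - \frac{1}{\omega(G)}\right)\langle \jJ, \xX\circ \xX\rangle \;=\; \left(1 - \frac{1}{\omega(G)}\right)\langle \xX, \xX\rangle.
\]
Combining this with the Cauchy--Schwarz bound gives the desired inequality.

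I do not expect any genuine obstacle: both the inequality \eqref{eq:ms} and the Schur-closure of $\mathcal{CP}_n$ are already established in the excerpt, and the ordinary Cauchy--Schwarz step is identical to the one in the proof of Lemma~\ref{lem:C-S-improved}. The only point that deserves a brief remark is the verification that $\langle \aA, \xX\rangle = \langle \ov{\xX}, \aA\rangle$, which holds trivially because $\aA \circ \aA = \aA$ and so $\langle \aA, \xX\rangle = \langle \aA \circ \aA, \xX\rangle = \langle \aA, \aA \circ \xX\rangle$.
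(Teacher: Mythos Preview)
Your proposal is correct and matches the paper's approach exactly: the paper obtains Corollary~\ref{cor:CS_omega} by rerunning the proof of Corollary~\ref{cor:CS_chivec} with $\yY=\aA$, replacing Lemma~\ref{lem:chivec} by the completely positive Motzkin--Straus inequality~\eqref{eq:ms} and invoking the Schur-product closure of $\mathcal{CP}_n$ via $(\xx\xx^\T)\circ(\yy\yy^\T)=(\xx\circ\yy)(\xx\circ\yy)^\T$.
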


For example, let $\eE_0$ be the projector onto the eigenspace corresponding to the largest eigenvalue $\lambda_1$ of $\aA$ (in particular, it is completely positive). Choosing $\xX = \eE_0$ recovers Nikiforov's spectral Tur\'{a}n inequality \cite{nikiforov2002some}
$$\lambda_1^2 \leq \left(1-\dfrac{1}{\omega(G)}\right)2m.$$

\subsection{The bounded rank vector chromatic numbers} \label{sec:bounded}

As mentioned in the introduction, our strategy is to modify the SDP \eqref{sdp2} by adding rank-restrictions to the matrix $\zZ$. We now formally define the \textit{bounded rank vector chromatic number} $\chi_{\mathrm{vec,d}}$ by the program
\begin{align} \chi_{\mathrm{vec,d}}(G) := \hspace{2cm} \max \quad & \langle \jJ, \zZ\rangle, \label{sdp5}\\
\textrm{subject to} \quad & \langle \iI+\ov{\aA}, \zZ\rangle = 1, \nonumber\\
& \zZ\geq \0, \, \zZ\succeq \0, \nonumber \\& \mathrm{rank}~\zZ \leq d  \nonumber.
\end{align}
Note that \eqref{sdp2} gives $\chi_{\mathrm{vec,n}}(G) = \chi_{\mathrm{vec}}(G)$ and \eqref{sdp1} implies $\chi_{\mathrm{vec,1}}(G) = \omega(G)$, because any doubly nonnegative matrix of rank $1$ must be completely positive.

In addition, we define two modifications of this program for $d = 3$. The first modification is 
\begin{align} 
\chi_{\mathrm{vec}, 3}'(G) :=\hspace{2cm} \max \quad & \langle \jJ, \xX\circ\xX \rangle, \label{sdp5-mod1} \\
\textrm{subject to} \quad & \langle \iI+\overline{\aA}, \xX\circ \xX\rangle = 1, \nonumber\\& \xX \succeq \0, \nonumber
\\& \mathrm{rank}~\xX \leq 2. \nonumber
\end{align}
Note that $``3"$ in $\chi_{\mathrm{vec}, 3}'(G)$ is not a typo: if $\xX$ has rank $2$, then $\xX \circ \xX$ has rank at most $3$, so we are in fact looking at feasible solutions to the program defining $\chi_{\mathrm{vec}, 3}(G)$ which are Schur squares of rank $2$ matrices.

The second modification adds one more restriction to the first modification. We write $\xX_G \geq 0$ if $\xX_{ij} \geq 0$ for any edges $\{i, j\} \in E(G)$.
\begin{align} 
\chi''_{\mathrm{vec}, 3}(G) :=\hspace{2cm} \max \quad & \langle \jJ, \xX\circ\xX \rangle, \label{sdp5-mod2} \\
    \textrm{subject to} \quad & \langle \iI+\overline{\aA}, \xX\circ \xX\rangle = 1, \nonumber\\& \xX \succeq \0, \ \xX_G \geq 0,\nonumber
    \\& \mathrm{rank}~\xX \leq 2. \nonumber
\end{align}

Since each program narrows the feasible region of the previous one, we have
$$\chi''_{\mathrm{vec}, 3}(G) \leq \chi_{\mathrm{vec}, 3}'(G)  \leq \chi_{\mathrm{vec}, 3}(G).$$
The next lemma shows how upper bounds on each of these programs translate to an upper bound on $\lambda_1^2 + \lambda_2^2$.
\begin{lemma}
\label{lem:sdp-to-BN}
Suppose $f: \nN \to \mathbf{R}^+$ is a function such that for any graph $G$ with $\omega(G) \leq \omega$, we have $\chi''_{\mathrm{vec}, 3}(G) \leq f(\omega)$. Then for any graph $G$ that is not the complete graph, we have
$$\lambda_1^2 + \lambda_2^2 \leq \left(1 - \frac{1}{f(\omega(G))}\right) 2m.$$
\end{lemma}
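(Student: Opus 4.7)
The plan is to turn the hypothesized SDP upper bound on $\chi''_{\mathrm{vec},3}(G)$ into the spectral bound by instantiating the program at a rank-two approximation of $\aA$.

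First I would mirror the opening of the proof of Lemma~\ref{lem:chivec}: any $\xX$ with $\xX \succeq \0$, $\mathrm{rank}\,\xX \leq 2$, and $\xX_G \geq 0$ is feasible (up to scaling) for the program defining $\chi''_{\mathrm{vec},3}(G)$, so
\[
\langle \jJ, \xX \circ \xX\rangle \leq \chi''_{\mathrm{vec},3}(G)\,\langle \iI + \overline{\aA}, \xX \circ \xX\rangle.
\]
Using $\jJ = \iI + \aA + \overline{\aA}$ and the identity $\langle \jJ, \xX \circ \xX\rangle = \langle \xX, \xX\rangle$, this rearranges into
\[
\langle \aA, \xX \circ \xX\rangle \leq \left(1 - \frac{1}{\chi''_{\mathrm{vec},3}(G)}\right)\langle \xX, \xX\rangle \leq \left(1 - \frac{1}{f(\omega(G))}\right)\langle \xX, \xX\rangle.
\]

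Combining this with the ordinary Cauchy-Schwarz inequality
\[
\langle \xX, \aA\rangle^2 = \langle \xX \circ \aA, \aA\rangle^2 \leq \lVert \xX \circ \aA\rVert^2 \cdot \lVert \aA\rVert^2 = \langle \aA, \xX \circ \xX\rangle \cdot 2m
\]
(using $\aA \circ \aA = \aA$ and $\lVert \aA\rVert^2 = 2m$) yields the key relation
\[
\langle \xX, \aA\rangle^2 \leq 2m\left(1 - \frac{1}{f(\omega(G))}\right)\langle \xX, \xX\rangle
\]
for every feasible $\xX$.

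The natural substitution is $\xX = \lambda_1 \mathbf{v}_1 \mathbf{v}_1^\T + \lambda_2 \mathbf{v}_2 \mathbf{v}_2^\T$ for orthonormal eigenvectors $\mathbf{v}_1, \mathbf{v}_2$ attached to $\lambda_1, \lambda_2$. Because $G$ is not complete, the classification of connected graphs with a single positive eigenvalue (Smith's theorem) forces $\lambda_2 \geq 0$, so $\xX \succeq \0$ and $\mathrm{rank}\,\xX \leq 2$ are immediate. Using $\aA \mathbf{v}_i = \lambda_i \mathbf{v}_i$ and orthonormality one computes $\langle \aA, \xX\rangle = \langle \xX, \xX\rangle = \lambda_1^2 + \lambda_2^2$, so the key relation becomes $(\lambda_1^2 + \lambda_2^2)^2 \leq 2m(1 - 1/f(\omega(G)))(\lambda_1^2 + \lambda_2^2)$, and the conclusion follows after dividing by $\lambda_1^2 + \lambda_2^2 > 0$.

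The main obstacle is checking the remaining feasibility constraint $\xX_G \geq 0$, namely $\lambda_1 (\mathbf{v}_1)_i (\mathbf{v}_1)_j + \lambda_2 (\mathbf{v}_2)_i (\mathbf{v}_2)_j \geq 0$ on every edge. By Perron-Frobenius, $\mathbf{v}_1 > 0$ entrywise, so the first summand is strictly positive on every edge, but the second can be negative when $\mathbf{v}_2$ changes sign across the edge. I expect to address this either by a direct spectral argument bounding the negative $\mathbf{v}_2$ contribution by the positive $\mathbf{v}_1$ contribution on edges, or by replacing $\xX$ with the feasible perturbation $(\lambda_1 + c)\mathbf{v}_1 \mathbf{v}_1^\T + \lambda_2 \mathbf{v}_2 \mathbf{v}_2^\T$ for the smallest $c \geq 0$ enforcing $\xX_G \geq 0$ and tracking the multiplicative loss, which would then be absorbed into the constants of Theorems~\ref{thm:main_result2} and~\ref{thm:main_result3}.
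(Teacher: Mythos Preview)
Your argument is correct up to the point you flag, and the substitution $\xX = \lambda_1 \vv_1\vv_1^\T + \lambda_2 \vv_2\vv_2^\T$ together with the Cauchy--Schwarz step is exactly what the paper does. The genuine gap is the feasibility constraint $\xX_G \geq 0$, and neither of your proposed fixes works for the lemma \emph{as stated}. A spectral bound forcing $\lambda_1 (\vv_1)_i(\vv_1)_j \geq -\lambda_2 (\vv_2)_i(\vv_2)_j$ on every edge simply fails in general (take any graph with $\lambda_2$ close to $\lambda_1$ and an edge where $\vv_1$ is small), and the perturbation $(\lambda_1 + c)\vv_1\vv_1^\T + \lambda_2 \vv_2\vv_2^\T$ changes both $\langle \aA,\xX\rangle$ and $\langle \xX,\xX\rangle$ in a way that leaves a $c$-dependent loss; since $c$ is not controlled by $\omega(G)$ alone, you cannot recover the clean bound $1 - 1/f(\omega(G))$, and absorbing the loss ``into the constants'' would prove a different, weaker statement than the one claimed.

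The paper's resolution is a one-line trick you are missing: rather than forcing $\xX$ to be feasible for $G$, pass to the subgraph $G'$ of $G$ consisting of those edges $\{i,j\}$ on which $\xX_{ij} \geq 0$. Then $\xX_{G'} \geq 0$ holds automatically, $\omega(G') \leq \omega(G)$ so the hypothesis still gives $\chi''_{\mathrm{vec},3}(G') \leq f(\omega(G))$, and crucially
\[
\lambda_1^2 + \lambda_2^2 = \langle \aA,\xX\rangle \leq \langle \aA',\xX\rangle
\]
since the discarded edge entries of $\xX$ are negative. Now your Cauchy--Schwarz and SDP inequalities, applied to $G'$ and $\aA'$ instead of $G$ and $\aA$, give the result with no loss. (A minor side remark: the fact $\lambda_2 \geq 0$ for non-complete $G$ follows directly from Cauchy interlacing on the $2\times 2$ zero block indexed by two non-adjacent vertices; invoking Smith's theorem is unnecessary.)
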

Since $\chi''_{\mathrm{vec}, 3}(G) \leq \chi_{\mathrm{vec}, 3}'(G)  \leq \chi_{\mathrm{vec}, 3}(G)$, we can replace $\chi''_{\mathrm{vec}, 3}(G) \leq f(\omega)$ with $\chi_{\mathrm{vec}, 3}(G) \leq f(\omega)$ or $\chi'_{\mathrm{vec}, 3}(G) \leq f(\omega)$ in the statement.
\begin{proof}
Since $G$ is not complete, we have $\lambda_1 \geq \lambda_2 \geq 0$. We define a rank $2$ matrix
$$\xX = \lambda_1 (\vv_1 \vv_1^\T) + \lambda_2 (\vv_2 \vv_2^\T).$$ 
We consider the subgraph $G'$ of $G$ on the same vertex set whose edges are $\{i, j\} \in E(G)$ with $\xX_{ij} \geq 0$. Let $\aA'$ be the adjacency matrix of $G'$. Then $\xX_{G'} \geq 0$ and $\omega(G') \leq \omega(G)$. By the program defining $\chi''_{\mathrm{vec}, 3}(G')$, we have
$$\langle \jJ, \xX\circ\xX \rangle \leq \chi''_{\mathrm{vec}, 3}(G') \langle \iI + \ov{\aA'}, \xX\circ\xX \rangle \leq f(\omega(G)) \langle \iI + \ov{\aA'}, \xX\circ\xX \rangle.$$
Recalling that $\jJ = \iI + \ov{\aA'} + \aA'$, we obtain
$$\langle \aA', \xX\circ\xX \rangle \leq \left(1 - \frac{1}{f(\omega(G))}\right)\langle \jJ, \xX\circ\xX \rangle.$$
Furthermore, we have
$$\lambda_1^2 + \lambda_2^2 = \langle \aA, \xX \rangle \leq \langle \aA', \xX \rangle.$$
Therefore we conclude that
\begin{align*}
     (\lambda_1^2 + \lambda_2^2)^2 = \langle \aA,\xX \rangle^2 \leq \langle \aA', \xX \rangle^2 & \leq \langle \aA',\aA' \rangle \langle \aA' , \xX \circ \xX \rangle \\ & \leq 2m ~ \left(1 - \frac{1}{f(\omega(G))}\right)  \langle \jJ,\xX \circ \xX \rangle \\& = 2m ~ \left(1 - \frac{1}{f(\omega(G))}\right)  (\lambda_1^2 + \lambda_2^2)
\end{align*}
as desired.
\end{proof}
We now state our main results on $\chi_{\mathrm{vec,3}}(G)$ and its two variants. In the next three subsections, we will prove the following successively stronger upper bounds on the three programs. 
\begin{lemma}\label{lem:chivec_line_weak}
    For any graph $G$, we have
    $$ \chi_{\mathrm{vec,3}}(G) \leq 4 \omega(G).$$
\end{lemma}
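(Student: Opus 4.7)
The plan is to exploit the rank-$3$ constraint geometrically. Realize $\zZ$ as the Gram matrix of a family of vectors $\phi_u \in \mathbb{R}^3$ indexed by $u \in V$, so $Z_{uv} = \langle \phi_u, \phi_v\rangle$. The doubly-nonnegative condition $\zZ \geq 0$ becomes the geometric requirement $\langle \phi_u, \phi_v\rangle \geq 0$ for every pair, i.e., all pairwise angles are at most $\pi/2$. The goal is to bound the ratio $\langle \jJ, \zZ\rangle / \langle \iI + \ov{\aA}, \zZ\rangle$ by $4\,\omega(G)$ using geometry in $\mathbb{R}^3$ together with the completely positive formulation of $\omega(G)$ from De Klerk and Pasechnik.

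First I would reduce to the case where all $\phi_u$ lie in a common closed half-space: the centroid $\mathbf{c} = \sum_u \phi_u$ satisfies $\langle \mathbf{c}, \phi_u\rangle = \sum_v Z_{uv} \geq 0$ for every $u$, so after a rotation I may assume $(\phi_u)_1 \geq 0$ for all $u$. Then I partition $V$ into (up to) four classes $V_\sigma$, $\sigma \in \{+,-\}^2$, according to the signs of the second and third coordinates of $\phi_u$. Within each class the signs of all three coordinates are constant, so replacing $\phi_u$ by its entrywise absolute value leaves the Gram matrix restricted to $V_\sigma \times V_\sigma$ unchanged. Consequently the principal submatrix $\zZ|_{V_\sigma \times V_\sigma}$ is completely positive, and applying the De Klerk--Pasechnik inequality to each induced subgraph $G[V_\sigma]$ (with $\omega(G[V_\sigma]) \leq \omega(G)$) and summing yields
\[\sum_\sigma \sum_{u,v \in V_\sigma} Z_{uv} \ \leq\  \omega(G) \cdot \langle \iI + \ov{\aA}, \zZ\rangle \ =\  \omega(G).\]

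Next I would control the cross-class contribution $\sum_{\sigma \neq \tau} \sum_{u \in V_\sigma, v \in V_\tau} Z_{uv}$ and aim to bound it by $3\,\omega(G)$. Decompose $Z_{uv} = \sum_{k=1}^3 (\phi_u)_k (\phi_v)_k$. Contributions from coordinates where $\sigma$ and $\tau$ agree in sign are nonnegative and can be bounded by applying Motzkin--Straus on suitable unions of classes that share the relevant sign pattern (on such unions the corresponding coordinate vector is nonnegative, giving a rank-$1$ CP piece). Contributions from disagreeing coordinates are nonpositive, and the constraint $Z_{uv} \geq 0$ forces the disagreement-contributions to be dominated by the agreement-contributions, preventing blow-up. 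Careful accounting across the three ways two of four sign-classes can differ should deliver $3\,\omega(G)$, which combined with the intra-class bound gives $\langle \jJ, \zZ\rangle \leq 4\,\omega(G)$.

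The main obstacle is the cross-class bookkeeping. Individually, each cross-pair contribution can be much larger than its naive CP analogue, because $\langle \phi_u, \phi_v\rangle$ may approach zero while $\langle |\phi_u|, |\phi_v|\rangle$ stays bounded away from zero; hence the argument cannot simply replace $\phi_u$ by $|\phi_u|$ globally, but must aggregate cross contributions using the nonnegativity of $\zZ$ together with the half-space reduction. Obtaining the constant exactly $4$ also requires the four-way sign partition to be exactly right, reflecting that after the half-space reduction only two coordinates carry sign information, giving $2^2 = 4$ classes and ultimately the factor $1 + 3 = 4$.
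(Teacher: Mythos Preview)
Your setup matches the paper's almost exactly: realize $\zZ$ as the Gram matrix of vectors in $\mathbb{R}^3$, rotate into a closed half-space (your centroid argument is a pleasant variant of the paper's choice of a single vector $z_1$), partition into the four orthants, and observe that each diagonal block $\zZ|_{V_\sigma\times V_\sigma}$ is completely positive so that the De~Klerk--Pasechnik formulation gives
\[
\sum_\sigma \langle \jJ, \zZ_\sigma\rangle \ \leq\ \omega(G)\sum_\sigma \langle \iI+\ov\aA,\zZ_\sigma\rangle \ \leq\ \omega(G)\,\langle \iI+\ov\aA,\zZ\rangle \ =\ \omega(G).
\]
Up to here everything is correct and essentially identical to the paper.

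The gap is exactly where you say it is: the cross-class terms. Your proposed coordinate-by-coordinate accounting does not close. Writing $Z_{uv}=\sum_k (\phi_u)_k(\phi_v)_k$ and splitting each coordinate into positive and negative parts produces pieces like $\langle \iI+\ov\aA,\,w_+^{(k)}(w_+^{(k)})^\top + w_-^{(k)}(w_-^{(k)})^\top\rangle$ that you cannot compare back to $\langle \iI+\ov\aA,\zZ\rangle$, because on non-edges of $G$ the original coordinate products $w^{(k)}_u w^{(k)}_v$ may be negative while the split version is nonnegative. The constraint $Z_{uv}\geq 0$ controls the \emph{sum} over $k$, not individual coordinates, so the ``disagreement dominated by agreement'' heuristic does not turn into a usable inequality at this granularity.

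The paper avoids all of this with a single application of Cauchy--Schwarz on the vector side. Writing $\zZ=\bB\bB^\top$ and $\bB=\sum_{\sigma}\bB_\sigma$ (with $\bB_\sigma$ keeping the rows in class $\sigma$ and zeroing the rest), one has
\[
\langle \jJ,\zZ\rangle \ =\ \Bigl\lVert \sum_\sigma \bB_\sigma^\top\mathbf{1}\Bigr\rVert^2 \ \leq\ 4\sum_\sigma \lVert \bB_\sigma^\top\mathbf{1}\rVert^2 \ =\ 4\sum_\sigma \langle \jJ,\zZ_\sigma\rangle \ \leq\ 4\,\omega(G).
\]
This one line replaces your entire cross-class analysis and in particular yields your desired bound $\sum_{\sigma\neq\tau}\sum_{u\in V_\sigma,v\in V_\tau}Z_{uv}\leq 3\,\omega(G)$ as an immediate consequence. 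So your plan is on the right track; the missing idea is to compare $\langle \jJ,\zZ\rangle$ to $\sum_\sigma\langle \jJ,\zZ_\sigma\rangle$ directly via Cauchy--Schwarz rather than trying to estimate the off-diagonal blocks one pair of classes at a time.
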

\begin{lemma}\label{lem:chivec_line} For any graph $G$, we have
\[
\chi_{\mathrm{vec}, 3}'(G)\leq C~\omega(G).
\]
where recall that $C = 1 + \frac{\pi^2 - 4}{\pi^2 + 4} \approx 1.4231$.
\end{lemma}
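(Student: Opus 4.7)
The plan is to parameterize the rank-$\leq 2$ PSD matrix $\xX$ by polar coordinates in $\Rds^2$ and bound the edge contribution in the defining SDP by combining the completely-positive (de Klerk--Pasechnik) bound with a signed Motzkin--Straus inequality, exploiting the rotational freedom available in $\Rds^2$.

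First, I would write $\xX$ as the Gram matrix of vectors $\uu_i = \sqrt{s_i}(\cos\theta_i, \sin\theta_i) \in \Rds^2$ with $s_i \geq 0$, and set $\phi_i := 2\theta_i$. The double-angle identity $\cos^2\alpha = (1 + \cos 2\alpha)/2$ gives $X_{ij}^2 = \tfrac{s_i s_j}{2}(1 + \cos(\phi_i - \phi_j))$ and hence the decomposition
\[
  \xX \circ \xX \;=\; \tfrac12 \bigl( \ss \ss^\T + \cc \cc^\T + \dd \dd^\T \bigr),
\]
where $\ss = (s_i)$, $\cc = (s_i \cos\phi_i)$, $\dd = (s_i \sin\phi_i)$. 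A direct computation shows $\langle \jJ, \xX\circ\xX\rangle = \tfrac12(S^2 + |Z|^2)$ with $S := \sum_i s_i$ and $Z := \sum_i s_i e^{i\phi_i}$. The rank-one completely positive matrix $\ss\ss^\T$ is handled by~\eqref{eq:ms}, yielding $\ss^\T \aA \ss \leq (1 - 1/\omega)\,S^2$.

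For the sign-indefinite pieces $\cc\cc^\T$ and $\dd\dd^\T$, I would apply a signed version of Motzkin--Straus: splitting a vector into positive and negative parts and applying~\eqref{eq:ms} to each piece gives
\[
  \cc^\T \aA \cc \;\leq\; \frac{\omega-1}{2\omega} \bigl( U_c^2 + (\mathrm{Re}\,Z)^2 \bigr),
\]
where $U_c := \sum_i s_i|\cos\phi_i|$, with the analogous bound for $\dd^\T\aA\dd$ involving $U_s := \sum_i s_i|\sin\phi_i|$ and $\mathrm{Im}\,Z$. Summing the three contributions gives
\[
  \langle \aA, \xX\circ\xX \rangle \;\leq\; \tfrac{\omega-1}{4\omega} \bigl( 2S^2 + U_c^2 + U_s^2 + |Z|^2 \bigr).
\]

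The main obstacle is controlling $U_c^2 + U_s^2$ sharply in terms of $S^2$ and $|Z|^2$ to obtain the constant $C = 2\pi^2/(\pi^2+4)$. The naive Cauchy--Schwarz bound $U_c^2 + U_s^2 \leq S^2$ is too weak, only giving a useful inequality for very small $\omega(G)$. The key observation is that $\yY := \cc\cc^\T + \dd\dd^\T$ is invariant under rotating the orthonormal frame of $\Rds^2$ by any angle $\beta$, so the signed bound above may be applied to $(R_\beta \cc, R_\beta\dd)$ for any $\beta$, yielding a one-parameter family of estimates. Averaging this family over $\beta \in [0,\pi)$ with a carefully chosen weight, and evaluating the resulting trigonometric integrals such as $\int_0^\pi |\cos\beta|\,d\beta = 2$ and $\int_0^\pi \cos^2\beta\,d\beta = \pi/2$, should extract the sharp constant $C = 2\pi^2/(\pi^2+4)$.
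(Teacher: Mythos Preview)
Your decomposition $\xX\circ\xX = \tfrac12(\ss\ss^\T + \cc\cc^\T + \dd\dd^\T)$ and the signed Motzkin--Straus bound are correct, but the resulting estimate
\[
\langle \aA, \xX\circ\xX\rangle \;\leq\; \tfrac{\omega-1}{4\omega}\bigl(2S^2 + U_c(\beta)^2 + U_s(\beta)^2 + |Z|^2\bigr)
\]
is too weak to yield $\chi'_{\mathrm{vec},3}(G) \leq C\omega$ for any constant $C$, and the rotational averaging cannot close the gap. Take $s_i \equiv 1$ and $\phi_i$ equidistributed on $[0,2\pi)$. Then $Z = 0$, while $U_c(\beta)\to 2S/\pi$ uniformly in $\beta$, so $U_c(\beta)^2 + U_s(\beta)^2 = (8/\pi^2 + o(1))S^2$ for \emph{every} $\beta$; no choice or weighted average of $\beta$ improves this. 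Your bound becomes $\langle \aA, \xX\circ\xX\rangle \leq \tfrac{\omega-1}{\omega}(1+4/\pi^2)\cdot\tfrac{S^2}{2}$ whereas $\langle \jJ, \xX\circ\xX\rangle = S^2/2$, and since $1+4/\pi^2>1$ this cannot imply $\langle \aA, \xX\circ\xX\rangle \leq (1-1/(C\omega))\langle \jJ, \xX\circ\xX\rangle$ once $\omega$ is large. The underlying loss is that splitting $\cc = \cc_+ - \cc_-$ and bounding $\cc_\pm^\T\aA\cc_\pm$ separately discards the nonnegative cross term $2\cc_+^\T\aA\cc_-$; this is precisely where the cancellation encoded by the joint rank-$3$ completely positive structure of $\xX\circ\xX$ lives.

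The paper avoids this by partitioning the \emph{vertex set} rather than the rank-one summands. For each $\alpha$ it splits $V$ into $P_+^\alpha, P_-^\alpha$ (according to the sign of $\sin(\phi_i-2\alpha)$), so that each restricted matrix $\xX_\pm^\alpha\circ\xX_\pm^\alpha$ is genuinely completely positive and \eqref{eq:ms} applies to it as a whole. The work then reduces to a purely analytic inequality: for some $\alpha$,
\[
\langle \jJ, \xX\circ\xX\rangle \;\leq\; C\bigl(\langle \jJ, \xX_+^\alpha\circ\xX_+^\alpha\rangle + \langle \jJ, \xX_-^\alpha\circ\xX_-^\alpha\rangle\bigr),
\]
and it is the averaging over $\alpha$ of \emph{this} inequality (via the Fourier coefficients of the indicator of a half-period acting on the measure $\mu=\sum_i r_i^2\delta_{\theta_i}$) that produces $C = 2\pi^2/(\pi^2+4)$. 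The integrals you anticipated do appear, but on the $\langle\jJ,\cdot\rangle$ side rather than inside a bound on $\langle\aA,\cdot\rangle$.
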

\begin{lemma}
    \label{lem:chi-vec-doubleprime-upper}
    For any graph $G$, we have
    $$\chi''_{\mathrm{vec}, 3}(G) \leq \omega(G) + 50 \omega(G)^{5/6}.$$
\end{lemma}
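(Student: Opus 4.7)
Writing any feasible $\xX = UU^\top$ with $U \in \mathbb{R}^{n\times 2}$, I associate to each vertex $i$ the row vector $u_i = r_i(\cos\theta_i, \sin\theta_i) \in \mathbb{R}^2$ and set $y_i = r_i^2 \geq 0$. The program defining $\chi''_{\mathrm{vec},3}(G)$ then asks to maximize $\sum_{i,j} y_iy_j \cos^2(\theta_i - \theta_j)$ subject to $\sum_i y_i^2 + 2\sum_{\{i,j\}\in E(\overline{G})} y_iy_j\cos^2(\theta_i-\theta_j)=1$, together with the sign constraint $\cos(\theta_i - \theta_j) \geq 0$ on every edge $\{i,j\}\in E(G)$. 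In the degenerate case where all $\theta_i$ coincide, this collapses to the completely positive Motzkin--Straus program and returns exactly $\omega(G)$; the additive $O(\omega(G)^{5/6})$ term thus captures the price of letting the angles spread around the circle.

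My plan is a discretize-and-combine scheme. Fix an integer $K$ (to be optimized) and partition $[0,2\pi)$ into $K$ arcs of width $2\pi/K$; let $V_k$ be the vertices whose angle lands in arc $k$ and $\tilde y_k = \sum_{i\in V_k} y_i$ their total weight. Within an arc we have $\cos^2(\theta_i-\theta_j) = 1 - O(1/K^2)$, so the weighted Motzkin--Straus inequality applied to $G[V_k]$ with weights $y_i$ bounds the intra-arc edge contribution to the numerator by $(1 - 1/\omega(G))\tilde y_k^2$ up to an $O(\omega(G)/K^2)$ discretization slack. The sign constraint then forces every edge of $G$ joining distinct arcs to lie within angular distance at most $\pi/2$, and the cosine factor $\cos^2 d_{kl}$ on such an edge is already present in the denominator through the corresponding $E(\overline{G})$ pairs between the same two arcs.

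The core step is an ``arc-level'' weighted Motzkin--Straus on the auxiliary rank-one super-problem with weights $\tilde y_k$ and pairwise cosine couplings $c_{kl} = \cos^2 d_{kl}$, in which every clique of $G$ projects (up to arc collisions that contribute an $O(K)$ additive error) to a weighted clique of the super-graph; this projection is what lets the effective clique number of the super-problem be controlled by $\omega(G)$. Combining this with the intra-arc bounds gives a ratio at most $\omega(G) + O(\omega(G)/K^2) + O(K \cdot \omega(G)^{3/4})$, where the second error term collects the inter-arc mismatch between the cosine-weighted edges of $G$ and of $\overline{G}$.

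The main technical obstacle I anticipate is the careful accounting of inter-arc edges whose angular distance sits near $\pi/2$: this is precisely where the sign constraint is just barely active and the cosine factor, while not zero, is small enough that the matching between numerator and denominator has to be done pair-by-pair rather than arc-by-arc. Once this matching is established, balancing the two error terms $\omega(G)/K^2$ and $K \cdot \omega(G)^{3/4}$ at the optimal scale $K \asymp \omega(G)^{1/12}$ yields the claimed bound $\omega(G) + 50\,\omega(G)^{5/6}$, with the constant $50$ absorbing all the absolute constants hidden in the discretization and the two applications of Motzkin--Straus.
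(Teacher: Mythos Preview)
The proposal has a genuine gap at the ``arc-level super-problem'' step, which carries essentially the entire weight of the argument but is neither defined nor justified. Concretely:
\begin{itemize}
\item You never specify what the super-graph is. To invoke a Motzkin--Straus bound on the arc weights $\tilde y_k$ with couplings $c_{kl}=\cos^2 d_{kl}$, you would need a graph on the arcs whose clique number is at most $\omega(G)$ (plus a controlled error). Your only justification is that ``every clique of $G$ projects to a clique of the super-graph'', but this implication runs the wrong way: it bounds the super-graph's clique number from \emph{below}, not above.
\item The claim that an inter-arc edge contribution ``is already present in the denominator through the corresponding $E(\overline G)$ pairs between the same two arcs'' is false in general. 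Nothing prevents every pair between two arcs from being an edge of $G$; in that case the inter-arc denominator contribution vanishes while the numerator contribution is positive, and no pair-by-pair matching is possible between those two arcs alone.
\item The asserted inter-arc error $O(K\,\omega^{3/4})$ has no provenance in the sketch; its exponent appears to be reverse-engineered from the target $\omega^{5/6}$ via the balancing $K\asymp\omega^{1/12}$, rather than derived.
\end{itemize}
The paper's argument is structurally different: it is a concentration-then-perturb proof, not a discretize-and-combine one. One assumes the bound fails, observes that the sign constraint $\xX_G\geq 0$ forces every pair with $\cos(\theta_i-\theta_j)<-\epsilon$ to be a non-edge, and uses the failing assumption to show such pairs carry negligible weight. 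A short geometric lemma then shows that almost all of the $r_i^2$-mass lies in a single interval of length $\pi/2+2\epsilon$. On that interval a direct comparison with the standard (rank-one) Motzkin--Straus, via a snapping map $\phi_\epsilon$ that pushes angles into $[0,\pi/2]$, gives a ratio at most $(1+O(\sqrt\epsilon))\omega$; optimizing $\epsilon=\omega^{-1/3}$ yields the $50\,\omega^{5/6}$ term. The point is that the sign constraint is used \emph{globally} to force clustering, not locally to match numerator and denominator arc-by-arc; your scheme never establishes that clustering and therefore has to confront the full inter-arc interaction, which is where it breaks down.
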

Combined with Lemma~\ref{lem:sdp-to-BN}, we observe that Lemma~\ref{lem:chivec_line_weak} implies Theorem~\ref{thm:main_result2} with the weaker constant $C = 4$, Lemma~\ref{lem:chivec_line} implies Theorem~\ref{thm:main_result2}, and Lemma~\ref{lem:chi-vec-doubleprime-upper} implies Theorem~\ref{thm:main_result3}. Thus, we complete the proofs of Theorem~\ref{thm:main_result2} and Theorem~\ref{thm:main_result3}.

Given these upper bounds, it is natural to wonder if there is hope of resolving the Bollob\'{a}s-Nikiforov conjecture by tightening the bounds on $\chi_{\mathrm{vec}, 3}(G)$, $\chi_{\mathrm{vec}, 3}'(G)$ and $\chi_{\mathrm{vec}, 3}''(G)$. Indeed, if we can show that any of them is at most $\omega(G)$ for all graphs $G$, then the Bollob\'{a}s-Nikiforov conjecture follows from Lemma~\ref{lem:sdp-to-BN}.

Unfortunately, we show that this is not the case for $\chi_{\mathrm{vec}, 3}(G)$ and $\chi_{\mathrm{vec}, 3}'(G)$. One way to observe this is that Lemma~\ref{lem:sdp-to-BN} also holds for signed graphs with essentially the same proof as we have in this paper. However Conjecture~\ref{conj:bollobas_nikiforov} does not, as was shown in~\cite{kannan2023signed} with the example $G = C_5$.

In Section~\ref{sec:counterexamples} we will exhibit graphs $G$ with arbitrarily large clique number such that both $\chi_{\mathrm{vec}, 3}(G)$ and $\chi_{\mathrm{vec}, 3}'(G)$ are bounded away from $\omega(G)$.
\begin{proposition}
    \label{prop:chi-3-lower}
    For any even $\omega$, there exists a graph $G$ with $\omega(G) = \omega$ such that $\chi_{\mathrm{vec}, 3}(G) \geq \chi_{\mathrm{vec}, 3}'(G) > 1.04\omega$.
\end{proposition}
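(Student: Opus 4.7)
The plan is to take $G := K_{\omega/2}[C_5]$, the lexicographic product of $K_{\omega/2}$ with the $5$-cycle (equivalently, $\omega/2$ disjoint copies of $C_5$ with every possible edge placed between distinct copies). Since the lexicographic product satisfies $\omega(G[H]) = \omega(G)\cdot\omega(H)$, this gives $\omega(G) = (\omega/2)\cdot 2 = \omega$, as required. The inequality $\chi_{\mathrm{vec},3}(G)\geq \chi_{\mathrm{vec},3}'(G)$ is immediate from the nested feasible regions noted earlier, so the task reduces to producing one feasible point for \eqref{sdp5-mod1} whose objective exceeds $1.04\,\omega$.

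For this I would use the ``doubled-angle'' pentagon gadget: set $\vv_j := (\cos(4\pi j/5),\sin(4\pi j/5))\in\mathbb{R}^2$ for $j\in\mathbb{Z}/5$, and assign the vector $\vv_j$ to every vertex $(i,j)$ of $G$ (identically across copies). The resulting Gram matrix $\xX_{(i,j),(i',j')} = \cos(4\pi(j-j')/5)$ is PSD of rank at most $2$, hence feasible after rescaling. The angle-doubling is the crucial idea: pairs of $C_5$-adjacent indices land at angular distance $4\pi/5$ with $\cos^2 = (3+\sqrt{5})/8$ (large, helping the numerator), while $C_5$-non-adjacent indices sit at angular distance $2\pi/5$ with $\cos^2 = (3-\sqrt{5})/8$ (small, barely hurting the denominator). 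This is exactly the rank-$2$ improvement over a Motzkin--Straus (rank-$1$) solution; it is essentially the same gain exploited by the signed-$C_5$ example of \cite{kannan2023signed}.

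The rest is a direct computation. The non-edges of $G$ are precisely the pairs within a single copy of $C_5$ at $C_5$-distance $2$, giving $5\omega$ ordered non-edges each contributing $(3-\sqrt{5})/8$; combined with $\langle \iI,\xX\circ\xX\rangle = 5\omega/2$ and the identity $\sum_{j,j'\in\mathbb{Z}/5}\cos^2(4\pi(j-j')/5) = 25/2$, one finds
\[
\langle \jJ,\xX\circ\xX\rangle = \tfrac{25\,\omega^2}{8}, \qquad \langle \iI+\overline{\aA},\xX\circ\xX\rangle = \tfrac{5\omega(7-\sqrt{5})}{8}.
\]
The normalized objective of $\xX$ equals the ratio of these, which rationalizes to $\tfrac{5(7+\sqrt{5})}{44}\,\omega \approx 1.0495\,\omega > 1.04\,\omega$, proving the proposition.

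The main obstacle is picking the correct blow-up rather than any analytic difficulty. The other natural candidate $C_5[K_{\omega/2}]$ also has clique number $\omega$, but its non-edge count grows like $\Theta(\omega^2)$ because each $C_5$-non-edge is blown up to an $(\omega/2)\times(\omega/2)$ bipartite non-edge set; this swamps the pentagonal gain and drives the SDP objective over $\omega$ down to zero. In $K_{\omega/2}[C_5]$ the non-edges remain confined within single copies (only $5\omega$ of them), so the pentagon improvement scales linearly in $\omega$. Fixing this as the construction, everything else is bookkeeping.
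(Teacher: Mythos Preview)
Your proof is correct and essentially identical to the paper's. Your $K_{\omega/2}[C_5]$ is exactly the graph the paper denotes $C_5^{\omega/2}$ (their blow-up $G^n$ joins two vertices whenever they lie in different copies or are $G$-adjacent within a copy, i.e.\ it is $K_n[G]$), and your feasible matrix yields the same $\xX\circ\xX$ as theirs since $\cos^2(4\pi(j-j')/5)=\cos^2(\pi(j-j')/5)$; both computations produce the ratio $\tfrac{5(7+\sqrt5)}{44}\,\omega\approx 1.0495\,\omega$.
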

This result shows that we cannot prove Conjecture~\ref{conj:bollobas_nikiforov} by tightening our bounds on either  $\chi_{\mathrm{vec}, 3}(G)$ or $\chi_{\mathrm{vec}, 3}'(G)$. In fact, this serves as our motivation for introducing $\chi_{\mathrm{vec}, 3}''(G)$. Lemma~\ref{lem:chi-vec-doubleprime-upper} shows that Proposition~\ref{prop:chi-3-lower} does not apply to $\chi_{\mathrm{vec}, 3}''(G)$ and we have $\chi_{\mathrm{vec}, 3}''(G) \leq \omega(G) + o(\omega(G))$. Thus, we still have reason to hope that tightening our bound on $\chi_{\mathrm{vec}, 3}''(G)$ can resolve the Bollob\'{a}s-Nikiforov conjecture.

\subsection{Proof of Lemma~\ref{lem:chivec_line_weak}}
Before we move forward, the following proposition is a good motivation for the results to come.

\begin{proposition} \label{prop:chivec2omega}
    For any graph $G$, we have
    \[
        \chi_{\mathrm{vec,2}}(G) = \omega(G).
    \]
\end{proposition}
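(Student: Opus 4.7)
The plan is to establish the two inequalities $\omega(G) \leq \chi_{\mathrm{vec},2}(G)$ and $\chi_{\mathrm{vec},2}(G) \leq \omega(G)$. The first is essentially free from the remark immediately preceding the proposition: the rank-one feasible solutions for the doubly nonnegative program are precisely matrices of the form $\vv\vv^\T$ with $\vv \geq \0$, which belong to $\mathcal{CP}_n$. Hence $\omega(G) = \chi_{\mathrm{vec},1}(G) \leq \chi_{\mathrm{vec},2}(G)$, since increasing the allowed rank only enlarges the feasible region.

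For the reverse inequality $\chi_{\mathrm{vec},2}(G) \leq \omega(G)$, the natural plan is to upgrade every feasible solution of \eqref{sdp5} with $d=2$ to a feasible solution of the completely positive program defining $\omega(G)$, with the same objective value. Concretely, I would prove the (purely matrix-theoretic) claim that any doubly nonnegative matrix $\zZ$ of rank at most $2$ already lies in $\mathcal{CP}_n$. Write $\zZ = B B^\T$ with $B \in \Rds^{n\times 2}$, and let $\bb_1, \dots, \bb_n \in \Rds^2$ denote the rows of $B$. Entrywise nonnegativity of $\zZ$ translates to $\langle \bb_i, \bb_j \rangle \geq 0$ for all $i,j$, so the rows live in a set of planar vectors with pairwise nonnegative inner products.

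The key step is then an elementary geometric observation: any such finite family of vectors in $\Rds^2$ is contained in some closed $90^\circ$ cone. I would justify this by fixing any nonzero $\bb_k$, noting that every other $\bb_i$ lies in the closed half-plane $\{\bb : \langle \bb, \bb_k\rangle \geq 0\}$, and checking that the maximum and minimum signed angles of the $\bb_i$ relative to $\bb_k$ must differ by at most $\pi/2$ — otherwise the two extremal vectors would have negative inner product. After applying a rotation $R$ that maps this cone into the nonnegative quadrant, write $R\bb_i = (x_i, y_i)$ with $x_i, y_i \geq 0$. Since rotations preserve inner products, $\zZ_{ij} = x_i x_j + y_i y_j$, giving the completely positive decomposition $\zZ = \xx\xx^\T + \yy\yy^\T$ with $\xx, \yy \geq \0$.

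Putting this together, every feasible $\zZ$ for the program defining $\chi_{\mathrm{vec},2}(G)$ is also feasible for the completely positive program defining $\omega(G)$, with the same value of the objective $\langle \jJ, \zZ\rangle$ and the same value of the normalization $\langle \iI + \ov{\aA}, \zZ\rangle = 1$. This yields $\chi_{\mathrm{vec},2}(G) \leq \omega(G)$, completing the proof. I do not anticipate any serious obstacle: the entire argument hinges on the elementary planar geometry lemma above, and the only bookkeeping subtlety is handling vectors that might be zero, which is harmless since they contribute nothing to $\zZ$.
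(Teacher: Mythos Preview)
Your proposal is correct and follows essentially the same approach as the paper: both reduce the claim to the fact that every doubly nonnegative matrix of rank at most $2$ is completely positive, proved by writing $\zZ = B B^\T$, observing that the rows of $B$ lie in a planar cone of opening at most $\pi/2$, and rotating this cone into the nonnegative quadrant. The paper's argument is slightly terser on the geometric step, but the content is identical.
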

\begin{proof}
    This follows immediately from the nontrivial yet well-known fact (see e.g. \cite[Theorem 3.5]{shaked2021copositive}) that if $\mM$ is doubly nonnegative of rank $2$, then $\mM$ is completely positive. For the reader's convenience, we include a quick proof of this fact. If an $n \times n$ matrix $\mM$ is doubly nonnegative and has rank $2$, then there exists an $n \times 2$ matrix $\bB$ such that
\[
	\mM = \bB \bB^\T.
\]
The inner product between any two rows of $\bB$ must be $\geq \0$ because $\mM \geq \0$, so the cone generated by the rows is contained in a polyhedral cone whose extreme rays are at an angle of at most $\pi /2$. Let $\uU$ be the rotation in $\Rds^2$ that places the two rays within $\Rds^2_+$. Then we have
\[
	\mM= (\uU\bB^\T)^\T (\uU\bB^\T),
\]
and $\uU\bB^\T \geq \0$, so $\mM$ is completely positive.
\end{proof}

We now proceed to show that $\chi_{\mathrm{vec,d}}(G)$ is bounded by $\omega(G)$ times a function of $d$. This is in contrast with $\chi_{\mathrm{vec}}(G)$, which is not bounded by any function of $\omega(G)$. The $d = 3$ case of this lemma implies Lemma~\ref{lem:chivec_line_weak}.

\begin{lemma}\label{lem:main2}
    Let $d$ be fixed, and let $G$ be a graph. Assume $\zZ$ is a feasible solution for \eqref{sdp5} which is the Gram matrix of vectors contained in $C$ orthants. Then we have
    \[
        \langle \jJ,\zZ \rangle \leq C~\omega(G).
    \]
    In particular, $C$ is always upper bounded by $2^{d-1}$, so for any graph $G$ we have
    \[\omega(G) \leq \chi_{\mathrm{vec,d}}(G) \leq 2^{d-1} \omega(G).\]
\end{lemma}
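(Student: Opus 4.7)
The plan is to exploit the rank bound via a Gram representation in $\mathbb{R}^d$, partition the resulting vectors by orthant into $C$ groups, apply the De Klerk--Pasechnik inequality \eqref{eq:ms} on each group, and reassemble using Cauchy--Schwarz. Concretely, since $\zZ\succeq\0$ and $\mathrm{rank}\,\zZ\leq d$, I write $\zZ_{ij}=\langle v_i,v_j\rangle$ for vectors $v_1,\ldots,v_n\in\mathbb{R}^d$, and partition the index set into $S_1,\ldots,S_C$ by the orthant hosting each $v_i$. For each $k$, a diagonal sign flip sends every $v_i$ with $i\in S_k$ into $\mathbb{R}^d_{\geq 0}$, so the principal submatrix $\zZ_k:=(\zZ_{ij})_{i,j\in S_k}$ is completely positive.

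Next, for each $k$, the inequality \eqref{eq:ms} applied to $\zZ_k$ on the induced subgraph $G[S_k]$ (whose clique number is at most $\omega(G)$) rearranges to
\[
\langle \jJ_k,\zZ_k\rangle \;\leq\; \omega(G)\,\langle \iI_k+\ov{\aA}_k,\zZ_k\rangle.
\]
Summing over $k$ and noting that $\langle\iI+\ov{\aA},\zZ\rangle$ dominates $\sum_k\langle\iI_k+\ov{\aA}_k,\zZ_k\rangle$ (the cross-block $\ov{\aA}$-entries contribute nonnegatively, since $\zZ\geq\0$), I obtain $\sum_k \langle \jJ_k,\zZ_k\rangle\leq\omega(G)$.

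To conclude, I set $u_k:=\sum_{i\in S_k}v_i$. Then $\sum_k\langle \jJ_k,\zZ_k\rangle=\sum_k\|u_k\|^2$ while $\langle\jJ,\zZ\rangle=\bigl\|\sum_k u_k\bigr\|^2$, so Cauchy--Schwarz gives
\[
\langle \jJ,\zZ\rangle\;\leq\;\Big(\sum_k\|u_k\|\Big)^2\;\leq\;C\sum_k\|u_k\|^2\;\leq\;C\,\omega(G).
\]
For the $2^{d-1}$ bound, the $2^d$ closed orthants in $\mathbb{R}^d$ form $2^{d-1}$ antipodal pairs; if two vectors with all coordinates nonzero occupied opposite orthants, their inner product would be strictly negative, contradicting $\zZ\geq\0$. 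Vectors on a coordinate hyperplane can be reassigned to any adjacent orthant, so a partition using at most $2^{d-1}$ orthants always exists.

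The main obstacle is the Cauchy--Schwarz estimate, which is the only lossy ingredient and is tight only when the aggregate vectors $u_k$ are parallel; improving the factor of $C$ in general demands geometric information about the directions of the $u_k$ that is invisible to the orthant partition alone. This slack is exactly what the subsequent lemmas \ref{lem:chivec_line} and \ref{lem:chi-vec-doubleprime-upper} recover, by analyzing the two-dimensional geometry of the Cholesky factor of $\xX$ rather than relying solely on orthant counting.
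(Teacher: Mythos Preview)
Your argument is correct and essentially identical to the paper's: Gram representation, partition by orthant, complete positivity of each block (via a sign flip, where the paper phrases it as a rotation of the orthant), the De Klerk--Pasechnik inequality on each block, and Cauchy--Schwarz on the aggregate vectors $u_k$, which are exactly the paper's $\bB_k^\T\1$. The only difference is in justifying $C\le 2^{d-1}$: the paper rotates one nonzero vector onto $\ee_1$, forcing all vectors into the halfspace $\{x_1\ge 0\}$ (a union of $2^{d-1}$ closed orthants), whereas your antipodal-pair argument reaches the same conclusion but leaves the handling of vectors on coordinate hyperplanes slightly handwavy---a preliminary generic rotation (so that no $v_i$ has a zero coordinate) would make it rigorous.
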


\begin{proof}
    The lower bound follows since $\chi_{\mathrm{vec,d}}(G) \geq \chi_{\mathrm{vec,1}}(G) = \omega(G)$, where the first inequality is due to narrower feasible region. We now prove the upper bound.

	Assume $\zZ \geq \0$ is the Gram matrix of vectors $\{\zz_i\}_{i = 1}^n$, $\zz_i \in \Rds^d$. Upon applying a rotation to these vectors, we may assume that a nonzero vector, say $\zz_1$, is parallel to $\ee_1$. All other rotated vectors belong to the hyperspace $\{\zz : \langle \zz , \ee_1 \rangle \geq 0\}$, and this is entirely contained in $2^{d-1}$ orthants. Applying an orthogonal transformation to a set of vectors does not change their Gram matrix, so we may assume that $\zZ$ is the Gram matrix of vectors contained in $C(d)$ orthants, with $C(d) \leq 2^{d-1}$. 
	
	We partition the set $\{\zz_i\}$ into $C$ subsets, each entirely contained in an orthant. We denote by $\zZ_j$ the $n\times n$ Gram matrix of vectors belonging to the $j$th part, replacing the remaining vectors by the $\0$ vector. It follows that each $\zZ_j$ is completely positive (easily verified upon rotating its corresponding orthant to the nonnegative orthant), thus formulation \eqref{sdp1} gives that
	\[
		\omega(G) \langle \iI+\ov{\aA} , \zZ_j \rangle \geq \langle \jJ , \zZ_j\rangle, 
	\]
	for all $j$. As $\zZ \geq \sum \zZ_j$, it also follows that
	\[
		\langle \iI+\ov{\aA} , \zZ \rangle \geq \sum_{j =1}^{C} \langle \iI+\ov{\aA} , \zZ_j \rangle.
	\]
	On the other hand, $\zZ = \bB\bB^\T$, $\bB = \sum \bB_j$, and $\zZ_j = \bB_j\bB_j^\T$. Also $\langle \zZ,\jJ \rangle = \langle \bB^\T \1, \bB^\T \1\rangle$. Thus, by Cauchy-Schwarz, it follows that
	\[
		\langle \zZ,\jJ \rangle \leq C  \sum_{j =1}^{C} \langle \jJ, \zZ_j \rangle,
	\]
	whence the result follows.
\end{proof}
In particular, Lemma~\ref{lem:chivec_line_weak} follows from the $d = 3$ case. Finally, Lemma~\ref{lem:main2} also implies the following corollary on the computational hardness of the low-rank program \eqref{sdp5}.

\begin{corollary}
	For any fixed $d$, unless P=NP, $\chi_{\mathrm{vec,d}}(G)$ cannot be computed in polynomial time.
\end{corollary}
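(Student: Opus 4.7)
The plan is to deduce the corollary directly from Lemma~\ref{lem:main2} together with the classical NP-hardness of approximating the clique number. Lemma~\ref{lem:main2} supplies the sandwich
\[\omega(G) \;\le\; \chi_{\mathrm{vec,d}}(G) \;\le\; 2^{d-1}\,\omega(G).\]
Since $d$ is fixed, $2^{d-1}$ is a constant. Thus any polynomial-time algorithm that outputs $\chi_{\mathrm{vec,d}}(G)$ to sufficient precision (say, additive error below $1/2$, noting that $\omega$ is an integer) would in particular output a value between $\omega(G)$ and $2^{d-1}\omega(G)$---that is, a constant-factor approximation to the clique number.

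To finish, I would invoke H{\aa}stad's celebrated inapproximability theorem: unless $\mathrm{P}=\mathrm{NP}$, no polynomial-time algorithm approximates $\omega(G)$ within factor $n^{1-\varepsilon}$ for any $\varepsilon > 0$; a fortiori, no constant-factor approximation is available in polynomial time. This contradicts the assumed polynomial-time procedure for $\chi_{\mathrm{vec,d}}$, completing the argument. For the very small cases $d=1,2$ one can bypass H{\aa}stad entirely and instead use the identities $\chi_{\mathrm{vec,1}}(G) = \chi_{\mathrm{vec,2}}(G) = \omega(G)$ (the former follows from the remark after~\eqref{sdp5}, the latter being Proposition~\ref{prop:chivec2omega}), combined with Karp's original $\mathrm{NP}$-hardness of clique.

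The argument is essentially off-the-shelf, and I do not anticipate any serious obstacle. The only nuance worth articulating is the computational model for the real-valued quantity $\chi_{\mathrm{vec,d}}(G)$: one should specify that ``computing in polynomial time'' means producing an approximation with precision sufficient to distinguish consecutive integer candidates for $\omega(G)$ within the gap afforded by the Lemma~\ref{lem:main2} sandwich. This convention is standard in the semidefinite programming literature and makes the reduction from approximate clique to exact $\chi_{\mathrm{vec,d}}$ completely transparent.
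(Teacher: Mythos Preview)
Your proposal is correct and matches the paper's proof essentially verbatim: the paper's argument is a single sentence invoking the NP-hardness of approximating \texttt{MAXCLIQUE} within a constant factor (citing H{\aa}stad), which combined with the sandwich from Lemma~\ref{lem:main2} yields the result. Your additional remarks on precision and on the cases $d=1,2$ are sound but not needed for the paper's level of detail.
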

\begin{proof}
	It is well known that approximating \texttt{MAXCLIQUE} within a constant factor is NP-hard (see \cite{haastad1999clique}).
\end{proof}

\subsection{Proof of Lemma~\ref{lem:chivec_line}} \label{sec:constant}

As motivation for the proof of Lemma~\ref{lem:chivec_line}, we present a weaker (and simple) version with $C = 2$ using the strategy of Lemma~\ref{lem:main2}. This result implies Theorem~\ref{thm:main_result2} with $C = 2$.
\begin{lemma}
    For every graph $G$, it holds that
\[
\chi_{\mathrm{vec}, 3}'(G)\leq 2~\omega(G).
\]
\end{lemma}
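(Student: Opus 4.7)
The plan is to mimic the orthant-partitioning strategy from Lemma~\ref{lem:main2}, but leverage the additional structure in the definition of $\chi'_{\mathrm{vec},3}(G)$ to cut the orthant count from $4$ to $2$.

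First I would take a feasible $\xX$ for \eqref{sdp5-mod1} and use its rank-$\leq 2$ PSD structure to write $\xX = \bB\bB^\T$ with $\bB \in \Rds^{n\times 2}$, whose $i$-th row I denote $\xx_i = (a_i, b_i)^\T$. Then the key algebraic observation is
\[
(\xX\circ\xX)_{ij} \;=\; \langle \xx_i,\xx_j\rangle^2 \;=\; a_i^2 a_j^2 + 2 a_i b_i a_j b_j + b_i^2 b_j^2 \;=\; \langle \yy_i,\yy_j\rangle,
\]
where $\yy_i := (a_i^2,\; \sqrt{2}\,a_i b_i,\; b_i^2)^\T \in \Rds^3$. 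In other words, the feasible matrix $\zZ = \xX\circ\xX$ is the Gram matrix $\yY\yY^\T$ of the rows of an $n\times 3$ matrix $\yY$. The crucial point is that the first and third coordinates of each $\yy_i$ are automatically nonnegative, so every $\yy_i$ lies in one of only two orthants of $\Rds^3$, distinguished solely by the sign of the middle coordinate $\sqrt{2}\,a_i b_i$.

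Next I would partition $[n] = I_+ \cup I_-$ according to whether $a_i b_i \geq 0$ or $a_i b_i < 0$. Let $\bB_\pm$ be the submatrix of $\yY$ obtained by zeroing out rows indexed outside $I_\pm$, and set $\zZ_\pm := \bB_\pm \bB_\pm^\T$. Since all $\yy_i$ with $i\in I_+$ sit in the nonnegative orthant and those with $i\in I_-$ can be brought there by flipping the sign of the middle coordinate (an orthogonal transformation that does not alter $\zZ_-$), each $\zZ_\pm$ is completely positive. Applying \eqref{sdp1} then gives
\[
\langle \jJ,\zZ_\pm\rangle \;\leq\; \omega(G)\,\langle \iI+\ov{\aA},\zZ_\pm\rangle.
\]

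To combine these two inequalities with the bookkeeping of the original $\zZ$, I would note that entrywise $\zZ \geq \zZ_+ + \zZ_-$ (the cross entries indexed by one element of $I_+$ and one of $I_-$ are nonnegative in $\zZ$ but zero in $\zZ_\pm$), so since $\iI + \ov{\aA}$ has nonnegative entries,
\[
1 \;=\; \langle \iI+\ov{\aA},\zZ\rangle \;\geq\; \langle \iI+\ov{\aA},\zZ_+\rangle + \langle \iI+\ov{\aA},\zZ_-\rangle.
\]
On the other hand $\langle \jJ,\zZ\rangle = \|\yY^\T \1\|^2 = \|\bB_+^\T \1 + \bB_-^\T \1\|^2$, and Cauchy-Schwarz with $C = 2$ yields $\langle \jJ,\zZ\rangle \leq 2\,(\langle \jJ,\zZ_+\rangle + \langle \jJ,\zZ_-\rangle)$. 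Stringing these together gives $\langle \jJ,\zZ\rangle \leq 2\omega(G)$, as desired.

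The only step that requires any care is verifying the two Gram/cone claims: that the embedding $\xx_i \mapsto \yy_i$ really does realize $\xX\circ\xX$ and that every $\yy_i$ has nonnegative first and third coordinates. Both are immediate, so I do not anticipate a real obstacle — the whole content is recognizing that the rank-$2$ PSD hypothesis on $\xX$ forces $\zZ = \xX\circ\xX$ to be represented by vectors spanning only two orthants of $\Rds^3$, saving a factor of $2$ over the generic bound from Lemma~\ref{lem:main2}.
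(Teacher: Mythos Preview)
Your proposal is correct and is essentially identical to the paper's proof: the vectors $\yy_i = (a_i^2, \sqrt{2}a_ib_i, b_i^2)$ are exactly the rows of the paper's matrix $\bB = (\vv_1\circ\vv_1 \mid \sqrt{2}\,\vv_1\circ\vv_2 \mid \vv_2\circ\vv_2)$, and the two-orthant observation plus the partition/Cauchy--Schwarz argument are precisely what the paper does via an appeal to Lemma~\ref{lem:main2} with $C=2$. The only difference is cosmetic --- you unfold the proof of Lemma~\ref{lem:main2} inline rather than citing it.
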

\begin{proof}
Let $\xX$ be any feasible solution for \eqref{sdp5-mod1}. Since $\xX$ is PSD with rank $2$, there exists vectors $\vv_1$ and $\vv_2$ such that
	\[
		\xX = \vv_1\vv_1^\T + \vv_2 \vv_2^\T.
	\]
	Note that
	\[
		\xX \circ \xX = (\vv_1 \circ \vv_1)(\vv_1 \circ \vv_1)^\T + 2(\vv_1 \circ \vv_2)(\vv_1 \circ \vv_2)^\T + (\vv_2 \circ \vv_2)(\vv_2 \circ \vv_2)^\T
	\]
	This is a doubly nonnegative matrix of rank $3$. In general, Lemma~\ref{lem:main2} applies as this is the Gram matrix of vectors contained in at most four orthants, but in this case we can do better. Let $\bB$ be the $3$-column matrix
	\[
		\bB = \left(\begin{array}{c|c|c} (\vv_1 \circ \vv_1) & \sqrt{2} (\vv_1 \circ \vv_2) & (\vv_2 \circ \vv_2)\end{array}\right).
	\]
	Note that $\bB\bB^\T = \xX \circ \xX$, so $\xX \circ \xX$ is the Gram matrix of the rows of $\bB$. Each row of $\bB$ contains nonnegative entries in the first and the third coordinates, thus all rows of $\bB$ are contained in two orthants. Therefore, Lemma~\ref{lem:main2} with $\zZ = \xX \circ \xX$ and $C = 2$ gives that
        \[
            \langle \jJ,\xX \circ \xX \rangle \leq 2~\omega(G)~\langle \iI+\ov{\aA} , \xX \circ \xX \rangle = 2~\omega(G),
        \]
    as desired.
\end{proof}

Let us improve upon the constant. Define \[C:=1+\dfrac{\pi^2-4}{\pi^2+4}=\dfrac{2\pi^2}{\pi^2+4}\approx 1.4231.\] Assume $\xX \succeq \0$ is any PSD matrix of rank $2$, whose rows and columns are indexed by the vertex set $V$ of $G$. Decompose $\xX$ as a Gram-Schmidt matrix $\xX = \vV \vV^T$ for $\vV \in \rR^{V \times 2}$. For
each vertex $i \in V$ let $\vv_i \in \rR^2$ be the row of $\vV$ corresponding to $i$. Let $r_i = \norm{\vv_i}$ and $\theta_i = \arg \vv_i \in \rR / 2\pi \zZ$. Then we have
$$\vv_i = r_i \e^{\ii \theta_i} \text{ and } \xX_{i, j} = r_i r_j \cos(\theta_i - \theta_j).$$

To analyze the geometry of the vectors $\vv_i$, we introduce the following notations.  For a given $\alpha$ in $\rR / 2\pi \zZ$ let $P_+^\alpha$ and $P_-^\alpha$ be the set of vertices $i$ with $\theta_i$ in $I_+^\alpha=[\alpha,\alpha+\pi/2)\sqcup [\alpha+\pi,\alpha+3\pi/2)$ and $I_-^\alpha=(\alpha,\alpha-\pi/2]\sqcup (\alpha-\pi,\alpha-3\pi/2]$, respectively. Let $\vV_+^\alpha$ and $\vV_-^\alpha$ be the Gram matrices corresponding to the sets of vectors $\{\vv_i\}_{i\in P_+^\alpha}$ and $\{\vv_i\}_{i\in P_-^\alpha}$, respectively, and write $\xX_+^\alpha = \vV_+^\alpha(\vV_+^\alpha)^T$ and $\xX_-^\alpha = \vV_-^\alpha(\vV_-^\alpha)^T$. As in the proof of Lemma~\ref{lem:main2}, note that $\xX_+^\alpha \circ \xX_+^\alpha$ and $\xX_-^\alpha \circ \xX_-^\alpha$ are both completely positive.

The key insight leading to the proof of Lemma~\ref{lem:chivec_line} is the following result.

\begin{lemma}\label{lem:partition} There exists some $\alpha$ in $\rR / 2\pi \zZ$ such that
\[
\langle \jJ, \xX\circ \xX \rangle \leq C(\langle \jJ, \xX_+^\alpha\circ \xX_+^\alpha \rangle+\langle \jJ, \xX_-^\alpha \circ \xX_-^\alpha \rangle).
\]
\end{lemma}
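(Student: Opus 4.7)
The plan is to average in $\alpha$ uniformly over $[0,\pi)$ and argue that the mean of $\Psi(\alpha):=\langle\jJ,\xX_+^\alpha\circ\xX_+^\alpha\rangle+\langle\jJ,\xX_-^\alpha\circ\xX_-^\alpha\rangle$ already exceeds $\tfrac{1}{C}\langle\jJ,\xX\circ\xX\rangle$; since $\Psi(\alpha)$ takes only finitely many values as $\alpha$ varies (the partition changes only when some $\theta_i$ crosses a boundary), some $\alpha$ then realizes the mean. The heart of the argument is a Fourier-series computation.

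The first step is to rewrite everything in terms of the moments $M_k:=\sum_i r_i^2 e^{2ki\theta_i}\in\mathbb{C}$. Using $\cos^2\theta=(1+\cos 2\theta)/2$ and $\sum_{i,j}r_i^2r_j^2\cos(2m(\theta_i-\theta_j))=|M_m|^2$, one has $\langle\jJ,\xX\circ\xX\rangle=\tfrac12(|M_0|^2+|M_1|^2)$, with analogous formulas for $\langle\jJ,\xX_\pm^\alpha\circ\xX_\pm^\alpha\rangle$ in terms of restricted moments. Next, a direct computation shows that as $\alpha$ ranges uniformly over $[0,\pi)$, a fixed pair $(i,j)$ belongs to the same set $P_\pm^\alpha$ with probability $p(\delta_{ij})=|1-2\delta'_{ij}/\pi|$, where $\delta'_{ij}=(\theta_i-\theta_j)\bmod\pi\in[0,\pi)$. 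This $\pi$-periodic triangle wave has Fourier expansion
\[p(\delta)=\frac12+\frac{4}{\pi^2}\sum_{k\text{ odd}}\frac{\cos 2k\delta}{k^2}.\]
Multiplying by $\cos^2\delta=(1+\cos 2\delta)/2$, using the product-to-sum identity on $\cos(2\delta)\cos(2k\delta)$, and summing against $r_i^2 r_j^2$ converts each $\cos(2m(\theta_i-\theta_j))$ into $|M_m|^2\geq 0$, giving
\[\mathbb{E}_\alpha[\Psi(\alpha)]=\frac{|M_0|^2+|M_1|^2}{4}+\frac{2}{\pi^2}\sum_{k\text{ odd}}\frac{|M_k|^2}{k^2}+\frac{1}{\pi^2}\sum_{k\text{ odd}}\frac{|M_{k-1}|^2+|M_{k+1}|^2}{k^2}.\]
Discarding everything except the $k=1$ terms (all the omitted terms are nonnegative) yields
\[\mathbb{E}_\alpha[\Psi(\alpha)]\geq(|M_0|^2+|M_1|^2)\Bigl(\tfrac14+\tfrac{1}{\pi^2}\Bigr)=\frac{\langle\jJ,\xX\circ\xX\rangle}{C},\]
matching exactly $C=2\pi^2/(\pi^2+4)$.

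The main obstacle is that no pointwise argument on pairs can work: as $\delta\to\pi/2$ the product $\cos^2(\delta)\,p(\delta)$ vanishes to third order while $\cos^2(\delta)$ vanishes only to second order, so no constant lower bound $p(\delta)\geq 1/C$ is available when weighted by $\cos^2\delta$. The proof must exploit the global Fourier structure of $p$ together with the positivity of all the moment squares $|M_k|^2$. The pleasant ``coincidence'' making the bound tight is that the $k=1$ Fourier coefficient $4/\pi^2$ of $p$, combined with the $\cos 2\delta$ arising from $\cos^2\delta$, produces exactly the $|M_0|^2$ contribution needed to balance the $|M_1|^2$ factor already present in $\langle\jJ,\xX\circ\xX\rangle$, so the constant $C$ is not improvable by this averaging strategy.
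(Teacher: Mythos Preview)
Your proposal is correct and follows essentially the same approach as the paper: both average over $\alpha$ and carry out a Fourier computation to show that the average of $\Psi(\alpha)$ is at least $\tfrac{1}{C}\langle\jJ,\xX\circ\xX\rangle$, whence some $\alpha$ achieves the bound. The paper organizes the Fourier analysis slightly differently---it passes to a continuous density $h$, expands $\int_{I_+^\alpha}e^{\ii 2\theta}h(\theta)\,d\theta$ as a Fourier series in $\alpha$, and applies Parseval---whereas you compute the same-part probability $p(\delta)$ directly as a triangle wave and Fourier-expand that; your version avoids the density approximation and is arguably a bit cleaner, but the two computations are equivalent and yield the identical constant $C=2\pi^2/(\pi^2+4)$.
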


We show how Lemma~\ref{lem:chivec_line} follows from Lemma~\ref{lem:partition}.

\begin{proof}[Proof of Lemma~\ref{lem:chivec_line}]
Note that $\xX_+^\alpha\circ \xX_+^\alpha$ and $\xX_-^\alpha\circ \xX_-^\alpha$ are disjoint principal submatrices of $\xX \circ \xX$ which partition its diagonal, and they are both completely positive.  Hence Lemma~\ref{lem:partition} and \eqref{sdp1} imply that
\begin{align*}
\langle \jJ, \xX\circ \xX \rangle ~&\leq~ C~\omega(G)~ \left\langle \iI + \ov{\aA} , \pmat{\xX_+^\alpha\circ \xX_+^\alpha & \0 \\ \0 & \xX_-^\alpha\circ \xX_-^\alpha}\right\rangle ~\\&\leq~ C~\omega(G)~ \left\langle \iI + \ov{\aA} , \xX \circ \xX \right\rangle,
\end{align*}
as we wanted.
\end{proof}
Let $\mu$ be the measure in $\rR / 2\pi \zZ$ formed by a finite number of atoms
$$\mu := \sum_{i \in V} r_i^2 \delta_{\theta_i}.$$ 
Observe that, 
\begin{align*}
\langle \jJ, \xX_+^\alpha \circ \xX_+^\alpha \rangle&=\sum_{i,j\in P_+^\alpha} \langle \vv_i, \vv_j \rangle^2 =  \sum_{i,j\in P_+^\alpha} r_i^2 r_j^2 \cos^2(\theta_i-\theta_j)\\&=\displaystyle\int\int_{I_+^\alpha\times I_+^\alpha}\cos^2(\theta-\tilde{\theta})d\mu(\theta)d\mu(\tilde{\theta})\\&=\dfrac{1}{2}\left(\displaystyle\int_{I_+^\alpha}d\mu(\theta)\right)^2+\dfrac{1}{2}\left|\displaystyle\int_{I_+^\alpha}\e^{\ii2\theta}d\mu(\theta)\right|^2.
\end{align*}

Similarly, $\langle \jJ, \xX_-^\alpha\circ \xX_-^\alpha \rangle =\dfrac{1}{2}\left(\displaystyle\int_{I_-^\alpha}d\mu(\theta)\right)^2+\dfrac{1}{2}\left|\displaystyle\int_{I_-^\alpha}\e^{\ii2\theta}d\mu(\theta)\right|^2$, and,
\[\langle \jJ, \xX\circ \xX \rangle=\dfrac{1}{2}\left(\displaystyle\int_{\rR / 2\pi \zZ}d\mu(\theta)\right)^2+\dfrac{1}{2}\left|\displaystyle\int_{\rR / 2\pi \zZ}\e^{\ii2\theta}d\mu(\theta)\right|^2.\]

By the equalities above, to prove Lemma~\ref{lem:partition} it suffices to prove the following lemma.
\begin{lemma}
\label{lem:partition-inequality}
For any Borel measure $\mu$ on $\mathbf{R} / 2\pi \mathbf{Z}$, we have
\[
\dfrac{1}{2}\left(\displaystyle\int_{\rR / 2\pi \zZ}d\mu(\theta)\right)^2+\dfrac{1}{2}\left|\displaystyle\int_{\rR / 2\pi \zZ}\e^{\ii2\theta}d\mu(\theta)\right|^2\]
\[\leq \displaystyle\dfrac{C}{2\pi}\int_0^{2\pi}\left(\displaystyle\int_{I_+^\alpha}d\mu(\theta)\right)^2+\left|\displaystyle\int_{I_+^\alpha}\e^{\ii2\theta}d\mu(\theta)\right|^2 d\alpha.
\]
\end{lemma}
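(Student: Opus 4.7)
The plan is to reduce the claim to a comparison between quadratic forms in the Fourier coefficients of $\mu$, after an appropriate pushforward. Observe that both $\e^{\ii 2\theta}$ and the set $I_+^\alpha$ are $\pi$-periodic in $\theta$ and $\alpha$ respectively. Let $\nu$ denote the pushforward of $\mu$ under the $2$-to-$1$ doubling map $\theta \mapsto 2\theta \pmod{2\pi}$ on $\rR/2\pi\zZ$, and write $a_n := \int \e^{\ii n \omega}\, d\nu(\omega)$ for its Fourier coefficients. This map identifies $I_+^\alpha$ with the half-circle $J^\beta := [\beta, \beta+\pi) \pmod{2\pi}$ where $\beta = 2\alpha$, so after changing variables $\beta = 2\alpha$ (using the $\pi$-periodicity of $\alpha \mapsto I_+^\alpha$), the desired inequality becomes
\[
\tfrac{1}{2}\bigl(|a_0|^2 + |a_1|^2\bigr) \leq \frac{C}{2\pi} \int_0^{2\pi} \left(\nu(J^\beta)^2 + \left|\int_{J^\beta} \e^{\ii\omega}\, d\nu\right|^2\right) d\beta.
\]

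I would next expand $\mathbf{1}_{[0, \pi)}$ on $\rR/2\pi\zZ$ as a Fourier series $\sum_{k \in \zZ} c_k \e^{\ii k \omega}$; a direct computation gives $c_0 = 1/2$, $c_n = 1/(\pi \ii n)$ for odd $n$, and $c_n = 0$ for nonzero even $n$. Since $\mathbf{1}_{J^\beta}(\omega) = \mathbf{1}_{[0, \pi)}(\omega - \beta)$, for each integer $n$, orthogonality of the exponentials in $\beta$ yields
\[
\int_0^{2\pi} \left|\int_{J^\beta} \e^{\ii n \omega}\, d\nu\right|^2 d\beta = 2\pi \sum_{k \in \zZ} |c_k|^2\, |a_{n+k}|^2.
\]
Applying this identity at $n = 0$ and $n = 1$ and regrouping the sum by $j := n + k$, the right-hand side of the target inequality becomes
\[
C \sum_{j \in \zZ} \bigl(|c_j|^2 + |c_{j-1}|^2\bigr)\, |a_j|^2.
\]

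It remains to match coefficients. For $j \in \{0, 1\}$ exactly one of $j$ and $j-1$ is $0$ and the other is $\pm 1$, so $|c_j|^2 + |c_{j-1}|^2 = \tfrac{1}{4} + \tfrac{1}{\pi^2} = \tfrac{\pi^2 + 4}{4\pi^2}$. The specific constant $C = \tfrac{2\pi^2}{\pi^2+4}$ in the statement is chosen precisely so that $C \cdot \tfrac{\pi^2+4}{4\pi^2} = \tfrac{1}{2}$; hence the coefficients of $|a_0|^2$ and $|a_1|^2$ on the right-hand side match those on the left-hand side \emph{exactly}. For all other $j$ the coefficient $|c_j|^2 + |c_{j-1}|^2$ is nonnegative, and so contributes nonnegatively to the right-hand side, yielding the inequality. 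The main conceptual step is the doubling pushforward, which exposes the half-circle structure and decouples the problem into Fourier modes; the main technical obstacle is carrying out the Fourier bookkeeping correctly, but once this is done the sharpness of $C$ is dictated entirely by the identity $|c_0|^2 + |c_{\pm 1}|^2 = \tfrac{\pi^2 + 4}{4\pi^2}$.
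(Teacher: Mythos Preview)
Your proof is correct and follows the same essential strategy as the paper's: expand in Fourier modes, apply Parseval in the averaging variable, and observe that the choice of $C$ makes the coefficients of the two relevant modes match exactly while all remaining contributions to the right-hand side are nonnegative.

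The one genuine difference worth noting is your use of the doubling pushforward $\theta \mapsto 2\theta$. The paper instead reduces to continuous densities $h(\theta)=\sum c_n\e^{\ii n\theta}$, computes $\int_{I_+^\alpha}\e^{\ii 2\theta}h(\theta)\,d\theta$ as a Fourier series in $\alpha$ directly, and finds that only the coefficients $c_n$ with $n\equiv 0 \text{ or } 2\pmod 4$ survive; it then drops all but the $c_0,c_{\pm 2}$ terms. Your pushforward absorbs this parity structure up front: it turns the two antipodal quarter-arcs $I_+^\alpha$ into a single half-circle $J^\beta$, so the relevant Fourier coefficients become those of $\mathbf 1_{[0,\pi)}$, which vanish exactly on nonzero even integers. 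This cleans up the bookkeeping (no ``$n\equiv 2\pmod 4$'' case analysis) and lets you work directly with the measure via $a_n$ rather than passing through a density approximation. Substantively, though, the two computations are the same Parseval identity seen through different changes of variable.
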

\begin{proof}
We focus on measures $\mu$ that are given by a continuous density function, that is $\mu(\theta) = h(\theta)d\theta$ with $h:\rR\to \rR$ continuous, nonnegative and $2\pi$--periodic. By standard limit arguments, this case is sufficient for the proof of the inequality above. We can write $h$ as the Fourier series $h(\theta) = \sum_{n=-\infty}^\infty c_n \e^{\ii n\theta}$ with $c_n=\overline{c_{-n}}\in \Cds$ for every $n$ in $\Zds$ and $\sum_{n=-\infty}^\infty \abs{c_n}^2<\infty$.

Observe that, 
\[
\dfrac{1}{2}\left(\displaystyle\int_{\rR / 2\pi \zZ}h(\theta)d\theta\right)^2+\dfrac{1}{2}\left|\displaystyle\int_{\rR / 2\pi \zZ}\e^{\ii 2\theta}h(\theta)d\theta\right|^2=\dfrac{4\pi^2(c_0^2+\abs{ c_2}^2)}{2}
\]

Now note that,
\begin{align*}
\displaystyle\int_{I_+^\alpha}\e^{\ii 2\theta}h(\theta)d\theta &= \displaystyle\int_{\alpha}^{\alpha+\pi/2}\e^{\ii 2\theta}h(\theta)d\theta+\displaystyle\int_{\alpha+\pi}^{\alpha+3\pi/2}\e^{\ii 2\theta}h(\theta)d\theta \\&=\pi c_{-2}+\displaystyle\sum_{n\neq 0} \dfrac{c_{n-2}(\ii^n-1+(-\ii)^n-(-1)^n)}{\ii n}\e^{\ii n\alpha}\\&=\pi c_{-2}-4\displaystyle\sum_{n\equiv 2 \mod{4}} \dfrac{c_{n-2}}{\ii n}\e^{\ii n\alpha},
\end{align*}
\noindent from which follows that,
\begin{align*}
\displaystyle\dfrac{1}{2\pi}\int_0^{2\pi}\left|\displaystyle\int_{I_+^\alpha}\e^{\ii 2\theta}d\mu(\theta)\right|^2 d\alpha &= \pi^2\abs{c_2}^2+16\displaystyle\sum_{n\equiv 2 \mod{4}}\dfrac{\abs{c_{n-2}}^2}{n^2}\\&\geq \pi^2\abs{c_2}^2+4\abs{c_0}^2
\end{align*}

Similarly,
\begin{align*}
\displaystyle\dfrac{1}{2\pi}\int_0^{2\pi}\left|\displaystyle\int_{I_+^\alpha}d\mu(\theta)\right|^2 d\alpha &= \pi^2\abs{c_0}^2+16\displaystyle\sum_{n\equiv 2 \mod{4}}\dfrac{\abs{c_n}^2}{n^2}\\&\geq \pi^2\abs{c_0}^2+4\lvert c_2\rvert^2
\end{align*}

Putting together these inequalities, we complete the proof.
\end{proof}

\subsection{Proof of Lemma~\ref{lem:chi-vec-doubleprime-upper}}
\label{sec:sublinear}
In this section, we study $\chi''_{\mathrm{vec}, 3}(G)$ and prove Lemma~\ref{lem:chi-vec-doubleprime-upper}.

For convenience, write $\omega = \omega(G)$. Let $\xX$ be any $V \times V$ matrix satisfying the constraints of \eqref{sdp5-mod2}. Then Lemma~\ref{lem:chi-vec-doubleprime-upper} is equivalent to the inequality
$$\sum_{ij \notin E} \xX_{ij}^2 \geq \frac{1}{\omega + 50 \omega^{5/6}} \sum_{i, j\in V} \xX_{ij}^2.$$
We define $\vV, \vv_i, r_i, \theta_i$ as in the previous section. Recall that
$$\xX_{ij} = r_i r_j \cos(\theta_i - \theta_j).$$
For any interval $I \subset \rR / 2\pi \zZ$, let $V_I$ denote the set of $i \in V$ such that $\theta_i \in I$.

First, we show a ``$(\pi / 2 + \epsilon)$" version of the Motzkin-Straus inequality, where the vectors lie in a cone with angle slightly larger than $\pi / 2$.
\begin{lemma}
\label{lem:MS-almost}
Let $\epsilon \in (0, 0.05)$ and $I \subset \rR / 2\pi \zZ$ be an interval of length $(\pi / 2 + 2\epsilon)$. Then we have
$$\sum_{i, j \in V_I, ij \notin E} \xX_{ij}^2 \geq \frac{1}{(1 + 10\sqrt{\epsilon})\omega } \sum_{i, j \in V_I} \xX_{ij}^2.$$
\end{lemma}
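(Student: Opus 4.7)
The plan is to reduce the inequality to the usual Motzkin--Straus/De Klerk--Pasechnik bound for vectors sitting in a $\pi/2$-cone, by projecting the few vectors sticking out slightly beyond. After rotating $I$ to $[-\epsilon, \pi/2+\epsilon]$, partition $V_I = T_1 \sqcup G_I \sqcup T_2$ with $T_1 = V_{[-\epsilon,0)}$, $G_I = V_{[0,\pi/2]}$, $T_2 = V_{(\pi/2,\pi/2+\epsilon]}$. Define projected angles $\tilde\theta_i = 0$ on $T_1$, $\tilde\theta_i = \pi/2$ on $T_2$, $\tilde\theta_i = \theta_i$ on $G_I$, and let $\tilde\xX$ be the Gram matrix of $\tilde\vv_i = r_i \e^{\ii\tilde\theta_i}$. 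All $\tilde\vv_i$ lie in the closed first quadrant, so exactly as in the paragraph preceding Lemma~\ref{lem:chivec_line}, $\tilde\xX \circ \tilde\xX$ is completely positive. Applying the De Klerk--Pasechnik formulation of $\omega(G[V_I]) \le \omega$ to $\tilde\xX \circ \tilde\xX$ yields
\[
\sum_{i,j \in V_I,\ ij\notin E}\tilde\xX_{ij}^2 \;\ge\; \frac{1}{\omega}\sum_{i,j\in V_I}\tilde\xX_{ij}^2.
\]

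To transfer this back to $\xX$, set $\Delta = \xX - \tilde\xX$. Then $\Delta_{ij}=0$ on $G_I \times G_I$, and the Lipschitz estimate $|\cos a - \cos b|\le|a-b|$ gives $|\Delta_{ij}|\le 2\epsilon r_ir_j$ on the remaining pairs, so $\|\Delta\|_F^2 = O(\epsilon^2 m_T m)$ with $m = \sum_{i\in V_I}r_i^2$ and $m_T = \sum_{i\in T_1\cup T_2}r_i^2$. Since $\tilde\xX$ is rank-$2$, PSD, and has trace $m$, also $\|\tilde\xX\|_F \ge m/\sqrt{2}$. Expanding
\(
\sum \xX_{ij}^2 = \sum \tilde\xX_{ij}^2 + 2\langle \tilde\xX, \Delta\rangle + \|\Delta\|_F^2
\)
and using Cauchy--Schwarz (and analogously for the non-edge sum) shows that the multiplicative gap between the $\xX$-sums and the $\tilde\xX$-sums is of order $\epsilon\sqrt{m_T/m}$. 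When $m_T/m$ is small, this is well within the tolerance $10\sqrt\epsilon$ the lemma allows, and the projection argument closes.

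The main obstacle is the regime $m_T/m = \Omega(1/\omega^2)$, where the naive perturbation costs a factor of $\omega$. The structural input I expect to exploit here is that the constraint $\xX_G\ge 0$ rules out all edges between $T_1$ and $T_2$: for any such pair the angular gap is strictly larger than $\pi/2$, forcing $\xX_{ij}<0$, so the pair must be a non-edge. Using this, one can either (i) pass to a shifted $\pi/2$-sub-interval of $I$ chosen to discard only the lighter of $T_1,T_2$, paying a controlled cost in the Motzkin--Straus; or (ii) use a refined decomposition $\xX = \xX^\flat - \xX^\sharp$ built from the positive and negative parts of the two coordinates of the $\vv_i$'s, where $\xX^\flat\circ\xX^\flat$ is completely positive and $\xX^\sharp$ is supported on pairs involving $T_1\cup T_2$ with entries of size $O(\epsilon)r_ir_j$. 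Tuning either refinement so that the final constant comes out to exactly $10\sqrt\epsilon$ is the delicate part of the proof.
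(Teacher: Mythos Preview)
Your overall strategy (project into a quadrant, apply Motzkin--Straus, then compare) is the right one, but the perturbation analysis has a real gap. The claim that ``the multiplicative gap between the $\xX$-sums and the $\tilde\xX$-sums is of order $\epsilon\sqrt{m_T/m}$'' is false for the \emph{non-edge} sum. Cauchy--Schwarz gives an additive error $|\text{NE}_{\tilde\xX}-\text{NE}_{\xX}|\le 2\sqrt{\text{NE}_{\tilde\xX}}\,\|\Delta\|_F+\|\Delta\|_F^2$, and since $\text{NE}_{\tilde\xX}$ can be as small as $m^2/(2\omega)$ while $\|\Delta\|_F=O(\epsilon\sqrt{m_Tm})$, the resulting \emph{relative} error is of order $\epsilon\sqrt{\omega}\sqrt{m_T/m}$, not $\epsilon\sqrt{m_T/m}$. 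With no assumed relation between $\epsilon$ and $\omega$, this cannot be absorbed into the $10\sqrt{\epsilon}$ slack. Concretely: place half the mass at angle $-\epsilon$ and half at $\pi/2-2\epsilon$, make each half a clique and put no edges between them; then for the cross (non-edge) pairs $\tilde\xX_{ij}^2/\xX_{ij}^2=\sin^2(2\epsilon)/\sin^2(\epsilon)\approx 4$, so $\text{NE}_{\tilde\xX}\approx 4\,\text{NE}_{\xX}$, a constant-factor blowup independent of $\epsilon$. Your fixes (i)--(ii) do not address this: shifting the $\pi/2$-window still leaves a $2\epsilon$-gap and cannot avoid discarding $\Omega(1)$ of the mass in general, and the constraint $\xX_G\ge 0$ (which, incidentally, the lemma does not need) only kills $T_1$--$T_2$ edges, not the problematic $T_1\times G_I$ pairs responsible for the blowup above.

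The missing idea is to widen the projection margin from $\epsilon$ to $\sqrt{\epsilon}$: send $[-\epsilon,\sqrt{\epsilon}]\mapsto 0$ and $[\pi/2-\sqrt{\epsilon},\pi/2+\epsilon]\mapsto \pi/2$. This seems worse because angles now move by up to $\sqrt{\epsilon}$, giving only the additive bound $\tilde\xX_{ij}^2\ge \xX_{ij}^2-2\sqrt{\epsilon}\,r_i^2r_j^2$; but that is still fine for the full sum via $\sum r_i^2r_j^2\le 2\sum \xX_{ij}^2$. The crucial gain is a \emph{pointwise multiplicative} upper bound $\tilde\xX_{ij}^2\le(1+3\sqrt{\epsilon})\,\xX_{ij}^2$ for every pair: any unprojected angle is at distance at least $\sqrt{\epsilon}$ from both $0$ and $\pi/2$, so moving a neighbouring angle by at most $\sqrt{\epsilon}$ can change $\cos^2$ by at most a factor $\bigl(\sqrt{\epsilon}/(\sqrt{\epsilon}-\epsilon)\bigr)^2\le 1+3\sqrt{\epsilon}$. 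This kills the Cauchy--Schwarz averaging (and the hidden $\sqrt{\omega}$) entirely; you get $\text{NE}_{\xX}\ge(1+3\sqrt{\epsilon})^{-1}\,\text{NE}_{\tilde\xX}$ directly and the lemma follows.
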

To prove this, we observe the following elementary inequalities.
\begin{lemma}
    \label{lem:trig}
    Let $\epsilon \in (0, 0.05)$. Define the map $\phi_\epsilon: [-\epsilon, \pi / 2 + \epsilon] \to [0, \pi / 2]$ by 
    $$\phi_{\epsilon}(x) = \begin{cases}
        0, x \in [-\epsilon, \sqrt{\epsilon}] \\
        \pi / 2, x \in [\pi / 2 - \sqrt{\epsilon}, \pi / 2 + \epsilon] \\
        x, \text{otherwise}
    \end{cases}.$$
    Then for any $x, y \in [-\epsilon, \pi / 2 + \epsilon]$, we have
    $$\cos(x - y)^2 - 2\sqrt{\epsilon} \leq \cos(\phi_\epsilon(x) - \phi_\epsilon(y))^2 \leq (1 + 3\sqrt{\epsilon}) \cos(x - y)^2$$
\end{lemma}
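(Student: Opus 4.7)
The plan is to first establish the uniform estimate $|\phi_\epsilon(x) - x| \leq \sqrt{\epsilon}$ for every $x \in [-\epsilon, \pi/2 + \epsilon]$, which is immediate from the piecewise definition: in each of the two rounding intervals, every point lies within $\sqrt{\epsilon}$ of the snapped endpoint (using $\epsilon \leq \sqrt{\epsilon}$ since $\epsilon < 1$). Combined with the $1$-Lipschitz property of $t \mapsto \cos(t)^2$ (whose derivative $-\sin(2t)$ is bounded by $1$ in absolute value), this yields the additive bound
\[
|\cos(\phi_\epsilon(x) - \phi_\epsilon(y))^2 - \cos(x - y)^2| \leq |(\phi_\epsilon(x) - \phi_\epsilon(y)) - (x - y)| \leq 2\sqrt{\epsilon},
\]
which gives the lower inequality of the lemma immediately.

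For the upper inequality, the additive bound already implies the target whenever $\cos(x - y)^2 \geq 2/3$, so the remaining work lies in the regime where $\cos(x - y)^2$ is small. I would handle this by case analysis on which rounding interval contains $x$ and $y$. The easy cases are: (i) neither variable is rounded (equality holds); (ii) both are rounded to the same endpoint, which forces $|x - y| \leq \sqrt{\epsilon} + \epsilon$ and hence $\cos(x - y)^2 \geq 1 - 4\epsilon$, so it suffices to check $(1 + 3\sqrt{\epsilon})(1 - 4\epsilon) \geq 1$, which holds throughout $\epsilon \in (0, 0.05)$ since $4\sqrt{\epsilon} + 12\epsilon < 3$; and (iii) they are rounded to opposite endpoints, in which case $\cos(\phi_\epsilon(x) - \phi_\epsilon(y))^2 = 0$ makes the inequality trivial.

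The main obstacle is the case in which exactly one of $x, y$ is rounded. By the symmetries $x \leftrightarrow y$ and $t \leftrightarrow \pi/2 - t$, it suffices to consider $x \in [-\epsilon, \sqrt{\epsilon}]$ (rounded to $0$) with $y \in [\sqrt{\epsilon}, \pi/2 - \sqrt{\epsilon}]$ (unrounded). There we must show $\cos(y)^2 \leq (1 + 3\sqrt{\epsilon}) \cos(y - x)^2$. If $x \geq 0$, both $y$ and $y - x$ lie in $[0, \pi/2]$ with $y - x \leq y$, so $\cos(y - x) \geq \cos(y) \geq 0$ and the bound is automatic. For $x \in [-\epsilon, 0)$, a derivative check on $\log(\cos(y)^2/\cos(y - x)^2)$ shows that the ratio is monotonically increasing in both $y$ and $|x|$, so the worst case occurs at $y = \pi/2 - \sqrt{\epsilon}$ and $x = -\epsilon$, reducing the problem to the single-variable inequality
\[
\frac{\sin(\sqrt{\epsilon})^2}{\sin(\sqrt{\epsilon} - \epsilon)^2} \leq 1 + 3\sqrt{\epsilon} \qquad \text{for all } \epsilon \in (0, 0.05).
\]
I would verify this by Taylor expansion: the leading-order ratio equals $(1 - \sqrt{\epsilon})^{-2} = 1 + 2\sqrt{\epsilon} + 3\epsilon + O(\epsilon^{3/2})$, so the constant $3$ in $1 + 3\sqrt{\epsilon}$ leaves ample slack to absorb the higher-order corrections. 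These corrections can be controlled rigorously using the elementary estimates $t - t^3/6 \leq \sin(t) \leq t$, which reduce the verification to a polynomial inequality in $\sqrt{\epsilon}$ that is routine to check on the interval $(0, \sqrt{0.05})$.
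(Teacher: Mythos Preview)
Your argument follows essentially the same route as the paper's: the Lipschitz estimate for the lower inequality, and a case analysis (both unrounded / same endpoint / opposite endpoints / exactly one rounded) for the upper inequality, with the only nontrivial case reduced by symmetry to the single inequality $\sin(\sqrt{\epsilon})^2/\sin(\sqrt{\epsilon}-\epsilon)^2 \leq 1+3\sqrt{\epsilon}$.

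There is one small technical slip in your final step. The paper bounds this ratio via the concavity of $\sin$ (equivalently, $\sin t/t$ decreasing on $(0,\pi/2)$), which gives directly
\[
\frac{\sin(\sqrt{\epsilon})^2}{\sin(\sqrt{\epsilon}-\epsilon)^2}\;\leq\;\Bigl(\frac{\sqrt{\epsilon}}{\sqrt{\epsilon}-\epsilon}\Bigr)^2=\frac{1}{(1-\sqrt{\epsilon})^2},
\]
and then checks $(1-u)^{-2}\leq 1+3u$ for $u=\sqrt{\epsilon}<\sqrt{0.05}$. This last inequality is equivalent to $1-5u+3u^2\geq 0$, whose smaller root is $(5-\sqrt{13})/6\approx 0.2324$, so it holds on the required range, but with almost no slack. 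Your proposed bounds $\sin t\leq t$ on the numerator and $\sin t\geq t-t^3/6$ on the denominator introduce an extra factor of roughly $(1-u^2(1-u)^2/6)^{-2}$ into the ratio, and the resulting polynomial inequality in $u$ actually \emph{fails} for $u\gtrsim 0.22$ (for instance at $\epsilon=0.049$ it gives about $1.665$ against $1+3\sqrt{\epsilon}\approx 1.664$). So your claim of ``ample slack'' is not accurate here; replace the Taylor bounds by the concavity argument and the proof goes through as in the paper.
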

\begin{proof}
    To establish the left inequality, we observe that $\abs{\phi_\epsilon(x) - x} \leq \sqrt{\epsilon}$ for any $x \in [\pi / 2 - \sqrt{\epsilon}, \pi / 2 + \epsilon]$. Thus 
    $$\abs{(x - y) - (\phi_\epsilon(x) - \phi_\epsilon(y))} \leq 2\sqrt{\epsilon}$$
    and by the intermediate value theorem
    $$\abs{\cos(x - y)^2 - \cos(\phi_\epsilon(x) - \phi_\epsilon(y))^2} \leq \sup_x \abs{2 \cos x \sin x} \cdot 2 \sqrt{\epsilon} = 2\sqrt{\epsilon}.$$
    To establish the right inequality, we require some casework. First, assume $x$ lies in $(\sqrt{\epsilon}, \pi / 2 - \sqrt{\epsilon})$. If $y$ also lies in $(\sqrt{\epsilon}, \pi / 2 - \sqrt{\epsilon})$, then $\phi_\epsilon(x) - \phi_\epsilon(y) = x - y$. Otherwise, we may assume that $y$ lies in $[-\epsilon, \sqrt{\epsilon}]$. Then we have $\phi_{\epsilon}(x) - \phi_\epsilon(y) > x - y - \epsilon$, which implies that
    $$\frac{\cos(\phi_\epsilon(x) - \phi_\epsilon(y))^2}{\cos(x - y)^2} = \frac{\sin(\pi / 2 - \phi_\epsilon(x) + \phi_\epsilon(y))^2}{\sin(\pi / 2 - x + y)^2} \leq \left(\frac{\pi / 2 - x + y + \epsilon}{\pi / 2 - x + y}\right)^2.$$
    As $\pi / 2 - x + y \geq \sqrt{\epsilon} - \epsilon$, we conclude that
    $$\frac{\cos(\phi_\epsilon(x) - \phi_\epsilon(y))^2}{\cos(x - y)^2} \leq  \left(\frac{\sqrt{\epsilon}}{\sqrt{\epsilon} - \epsilon}\right)^2 \leq 1 + 3\sqrt{\epsilon}.$$
    Symmetrically, if $y$ lies in $(\sqrt{\epsilon}, \pi / 2 - \sqrt{\epsilon})$, then we also have the desired result. If neither $x$ or $y$ lies in $(\sqrt{\epsilon}, \pi / 2 - \sqrt{\epsilon})$, we may assume that $x \in [-\epsilon, \sqrt{\epsilon}]$. If $y$ lies in $[\pi / 2 - \sqrt{\epsilon}, \pi / 2 + \epsilon]$, then $\cos(\phi_\epsilon(x) - \phi_\epsilon(y))^2 = 0$. If $y$ also lies in $[-\epsilon, \sqrt{\epsilon}]$, then we have
    $$\cos(x - y)^2 \geq \cos(\epsilon + \sqrt{\epsilon})^2 \geq \frac{1}{1 + 3\sqrt{\epsilon}}$$
    as desired.
\end{proof}
\begin{lemma}
    \label{lem:norm}
    We have
    $$\left(\sum_{i \in V} r_i^2\right)^2 \geq \sum_{i, j \in V} \xX_{ij}^2 \geq \frac{1}{2} \left(\sum_{i \in V} r_i^2\right)^2.$$
\end{lemma}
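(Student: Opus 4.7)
The plan is to observe that $\sum_{i,j \in V} \xX_{ij}^2 = \langle \jJ, \xX \circ \xX \rangle$, and that this quantity already admits a clean Fourier-type expression that was essentially computed in the proof of Lemma~\ref{lem:partition-inequality}. Specifically, writing $\xX_{ij} = r_i r_j \cos(\theta_i - \theta_j)$ and applying the identity $\cos^2(\theta_i-\theta_j) = \tfrac{1}{2}\bigl(1 + \cos(2\theta_i - 2\theta_j)\bigr)$, I would compute
\[
\sum_{i,j \in V} \xX_{ij}^2 = \frac{1}{2}\Bigl(\sum_{i \in V} r_i^2\Bigr)^2 + \frac{1}{2}\sum_{i,j \in V} r_i^2 r_j^2 \cos\bigl(2(\theta_i-\theta_j)\bigr).
\]
Expanding the cosine via the angle subtraction formula factorises the double sum into the sum of two squares:
\[
\sum_{i,j \in V} r_i^2 r_j^2 \cos\bigl(2(\theta_i-\theta_j)\bigr) = \Bigl(\sum_{i \in V} r_i^2 \cos(2\theta_i)\Bigr)^2 + \Bigl(\sum_{i \in V} r_i^2 \sin(2\theta_i)\Bigr)^2,
\]
which equals $\bigl|\sum_{i \in V} r_i^2 e^{2\ii\theta_i}\bigr|^2$.

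With this identity in hand, both bounds are immediate. The lower bound follows because the second term is a sum of squares and hence nonnegative. The upper bound follows because $\bigl|\sum_i r_i^2 e^{2\ii\theta_i}\bigr| \leq \sum_i r_i^2$ by the triangle inequality, so the second term is at most $\bigl(\sum_i r_i^2\bigr)^2$ and the total is at most $\bigl(\sum_i r_i^2\bigr)^2$. Alternatively, the upper bound can be obtained even more directly by using $\cos^2 \leq 1$ in the original sum, while the lower bound is the substantive half that relies on the Fourier identity.

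There is no real obstacle here: the whole argument is a one-line half-angle manipulation followed by completing the square, and all the ingredients are already present earlier in the excerpt. The only mild care needed is to keep track that $\sum_i r_i^2 = \int d\mu(\theta)$ in the measure-theoretic notation used in Lemma~\ref{lem:partition-inequality}, so that the lemma is literally the atomic-measure special case of the identity derived there.
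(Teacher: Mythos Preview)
Your proposal is correct and is essentially identical to the paper's proof: both derive the identity $\sum_{i,j}\xX_{ij}^2 = \tfrac{1}{2}\bigl(\sum_i r_i^2\bigr)^2 + \tfrac{1}{2}\bigl|\sum_i r_i^2 e^{2\ii\theta_i}\bigr|^2$ via the half-angle formula, then obtain the lower bound from nonnegativity of the second term and the upper bound from the triangle inequality applied to it.
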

\begin{proof}
    We observe the identity
    $$\sum_{i, j \in V} \xX_{ij}^2 = \dfrac{1}{2}\left(\sum_{j\in V} r_j^2\right)^2+\dfrac{1}{2}\left|\sum_{j\in V} r_j^2\e^{\ii 2\theta_j}\right|^2.$$
    Thus, the left inequality follows by applying the triangle inequality to the second summand, while the right inequality follows since the second summand is non-negative.
\end{proof}
\begin{proof}[Proof of Lemma~\ref{lem:MS-almost}]
    Replacing $G$ with $G[V_I]$, we may assume that $V_I = V$.
    By rotating all $\vv_i$ simultaneously, we may assume that $I = [-\epsilon, \pi/2 + \epsilon]$. 
    
    We define a new set of vectors $\{\ww_i: i \in V\}$ such that $\ww_i$ has norm $r_i$ and argument $\phi_{\epsilon}(\theta_i)$. Let $\wW$ be the Gram-Schmidt matrix of $\{\ww_i\}$. Note that $\ww_i$ lies in quadrant I, so the Motzkin-Straus inequality \eqref{sdp1} implies that
    $$\sum_{ij \notin E} \wW_{ij}^2 \geq \frac{1}{\omega} \sum_{i, j\in V} \wW_{ij}^2.$$
    By Lemma~\ref{lem:trig}, for any $i, j \in V$ we have
    $$\wW_{ij}^2 = r_i^2r_j^2 \cos(\phi_{\epsilon}(\theta_i) - \phi_{\epsilon}(\theta_j))^2 \leq (1 + 3\sqrt{\epsilon}) r_i^2r_j^2\cos(\theta_i - \theta_j)^2 =(1 + 3\sqrt{\epsilon})\xX_{ij}^2.$$
    Thus we have the bound
     $$\sum_{ij \notin E} \xX_{ij}^2 \geq \frac{1}{1 + 3\sqrt{\epsilon}} \sum_{ij \notin E} \wW_{ij}^2.$$
     On the other hand, by Lemma~\ref{lem:trig} we have
     $$\wW_{ij}^2 = r_i^2r_j^2 \cos(\phi_{\epsilon}(\theta_i) - \phi_{\epsilon}(\theta_j))^2 \geq \xX_{ij}^2 - 2\sqrt{\epsilon} r_i^2 r_j^2.$$
     Therefore we have the bound
     $$\sum_{i, j\in V} \wW_{ij}^2 \geq \sum_{i, j\in V} \xX_{ij}^2 - 2\sqrt{\epsilon} \left(\sum_{i \in V}r_i^2\right)^2.$$
     By Lemma~\ref{lem:norm}, we have
     $$\left(\sum_{i \in V_I}r_i^2\right)^2 \leq 2\sum_{i, j\in V} \xX_{ij}^2.$$
     So we get
     $$\sum_{i, j\in V} \wW_{ij}^2 \geq (1 - 4\sqrt{\epsilon}) \sum_{i, j\in V} \xX_{ij}^2.$$
     Combining the above inequalities, we conclude that
     $$\sum_{ij \notin E} \xX_{ij}^2 \geq \frac{1}{1 + 3\sqrt{\epsilon}} \sum_{ij \notin E} \wW_{ij}^2 \geq \frac{1}{(1 + 3\sqrt{\epsilon})\omega} \sum_{i, j\in V} \wW_{ij}^2 \geq \frac{1 - 4\sqrt{\epsilon}}{(1 + 3\sqrt{\epsilon})\omega}\sum_{i, j\in V} \xX_{ij}^2$$
     and the desired result follows.
\end{proof}
We also prove a simple claim about the geometry of vectors.
\begin{lemma}
\label{lem:main-cluster}
For any $\epsilon \in (0, 0.05)$, let $F \subset V \times V$ be the set of pairs $(i, j)$ such that $\cos(\theta_i - \theta_j) < -\epsilon$. Suppose for some $\delta > 0$ we have
$$\sum_{(i, j) \in F} \xX_{ij}^2 \leq \delta \epsilon^2 \sum_{i, j \in V} \xX_{ij}^2.$$
Then there exists an interval $I \subset \rR / 2\pi \zZ$ of length $\pi / 2 + 2\epsilon$ such that
$$\sum_{i \in V_I} r_i^2 \geq (1 - 3\sqrt{\delta}) \sum_{i \in V} r_i^2.$$
\end{lemma}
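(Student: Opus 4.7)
My plan is to reinterpret the hypothesis probabilistically and then localize a probability measure on the circle in two stages. Normalize so that $\sum_i r_i^2 = 1$ and define $\mu := \sum_i r_i^2 \delta_{\theta_i}$, a probability measure on $\rR/2\pi\zZ$. Combining the hypothesis with Lemma~\ref{lem:norm} yields
\[
\sum_{(i,j)\in F} \xX_{ij}^2 \;\leq\; \delta\epsilon^2 \sum_{i,j} \xX_{ij}^2 \;\leq\; \delta\epsilon^2 \Big(\sum_i r_i^2\Big)^{\!2} \;=\; \delta\epsilon^2.
\]
Since $\cos^2(\theta_i-\theta_j)\geq \epsilon^2$ on $F$, dividing by $\epsilon^2$ gives $(\mu\otimes\mu)(F)\leq \delta$ under the identification $F=\{(\theta,\theta'):\cos(\theta-\theta')<-\epsilon\}$. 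The goal then becomes: find an arc of length $\pi/2+2\epsilon$ carrying $\mu$-mass at least $1-3\sqrt{\delta}$.

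Stage 1 is a Markov-style step. Let $B(\theta):=\{\theta':\cos(\theta-\theta')<-\epsilon\}$, an open arc opposite $\theta$ of length $\pi-2\arcsin(\epsilon)$. The bound $(\mu\otimes\mu)(F)\leq\delta$ rewrites as $\int \mu(B(\theta))\,d\mu(\theta)\leq \delta$, so Markov's inequality yields some $\theta_1$ with $\mu(B(\theta_1))\leq\sqrt{\delta}$. Setting $T:=\rR/2\pi\zZ\setminus B(\theta_1)$ produces a closed arc of length $\pi+2\arcsin(\epsilon)$ centered at $\theta_1$ with $\mu(T)\geq 1-\sqrt{\delta}$.

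Stage 2 is a quantile argument inside $T$. Regard $T$ as a linear interval $[a,b]$, set $c:=\sqrt{\delta/2}$, and define
\[
q_1 := \sup\{\theta\in T : \mu([a,\theta])\leq c\},\qquad q_2 := \inf\{\theta\in T : \mu([\theta,b])\leq c\}.
\]
I will show $q_2-q_1\leq \pi/2+\arcsin(\epsilon)$ by contradiction: if $q_2-q_1>\pi/2+\arcsin(\epsilon)$, then for any sufficiently small $\eta>0$ the enlarged tails $L:=[a,q_1+\eta]$ and $R:=[q_2-\eta,b]$ each carry $\mu$-mass strictly exceeding $c$ (by the sup/inf definitions of $q_1,q_2$). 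Every pair in $L\times R$ has linear separation in $T$ greater than $\pi/2+\arcsin(\epsilon)$; since $T$ has length $\pi+2\arcsin(\epsilon)$ and $\arcsin(\epsilon)<\pi/6$, a short case check shows the corresponding circular distance also exceeds $\pi/2+\arcsin(\epsilon)$, so each such pair lies in $F$. Counting both orderings contributes more than $2c^2=\delta$ to $(\mu\otimes\mu)(F)$, a contradiction. Using $\arcsin(\epsilon)\leq 2\epsilon$ for $\epsilon\in(0,0.05)$, the arc $[q_1,q_2]$ has length at most $\pi/2+2\epsilon$ and $\mu$-mass at least $\mu(T)-2c\geq 1-(1+\sqrt{2})\sqrt{\delta}\geq 1-3\sqrt{\delta}$; I then enlarge it (arbitrarily) to an arc of length exactly $\pi/2+2\epsilon$ to finish.

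The main obstacle is calibrating the two error terms so that they sum to less than $3\sqrt{\delta}$: the choice $c=\sqrt{\delta/2}$ is exactly what makes $2c^2=\delta$ match the hypothesis, and the resulting total $\sqrt{\delta}$ (from Markov) plus $\sqrt{2\delta}$ (from the tails) equals $(1+\sqrt{2})\sqrt{\delta}\approx 2.41\sqrt{\delta}$, comfortably below $3\sqrt{\delta}$. A secondary subtlety is that $\mu$ is atomic, but using $\sup/\inf$ (rather than quantile equalities from a CDF) handles any atoms at $q_1$ or $q_2$ seamlessly, since the key fact used is that any enlargement of the tails past $q_1,q_2$ strictly increases their mass above $c$.
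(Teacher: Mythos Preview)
Your proof is correct and follows essentially the same two-stage strategy as the paper: first an averaging/Markov step to confine almost all of the $r_i^2$-mass to an arc of length roughly $\pi$, then a quantile argument inside that arc to locate the desired $(\pi/2+2\epsilon)$-arc. The only cosmetic differences are that the paper extracts the sharper bound $\delta$ (rather than $\sqrt{\delta}$) in the first stage via a direct average, and in the second stage uses a single threshold $\alpha$ whose closed left tail has mass $\geq\sqrt{\delta}$ to bound the right tail, whereas you use two symmetric $\sqrt{\delta/2}$-quantiles and a product-mass contradiction; both routes land comfortably under the stated $3\sqrt{\delta}$.
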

\begin{proof}
For each $i \in V$, let $F_i$ be the set of $j \in V$ such that $(i, j) \in F$. Observe that $\langle \vv_i, \vv_j \rangle^2 \geq \epsilon^2 r_i^2 r_j^2$ for any $(i, j) \in F$. Thus we have
$$\sum_{(i, j) \in F} r_i^2r_j^2 \leq \delta  \sum_{i, j \in V} \xX_{ij}^2 \leq \delta  \sum_{i, j \in V} r_i^2r_j^2.$$
So we can fix some $i^* \in V$ such that
$$\sum_{j \in F_{i^*}} r_i^2 \leq \delta \sum_{j \in V} r_j^2.$$
Without loss of generality, assume that $\theta_{i^*} = 0$. For each $i \in V \backslash F_{i^*}$, we must have $\cos(\theta_i) \geq -\epsilon$, which implies that $\theta_i \in (-\pi / 2 - 2\epsilon, \pi / 2 + 2\epsilon)$. Let $\alpha$ be the smallest number greater than $-\pi / 2 - 2\epsilon$ such that
$$\sum_{i \in V_{(-\pi / 2 - 2\epsilon, \alpha]}} r_i^2 \geq \sqrt{\delta} \sum_{i \in V} r_i^2.$$
We claim that $I = [\alpha, \alpha + \pi / 2 + 2\epsilon]$ has the desired property. Indeed, set $I_- = (-\pi / 2 - 2\epsilon, \alpha)$, $\overline{I_-} = [-\pi / 2 - 2\epsilon, \alpha]$ and $I_+ = (\alpha + \pi / 2 + 2\epsilon, \pi / 2 + 2\epsilon)$. The definition of $\alpha$ implies
$$\sum_{i \in V_{I_-}} r_i^2 \leq \sqrt{\delta} \sum_{i \in V} r_i^2.$$
Furthermore, for $i \in V_{\overline{I_-}}$ and $j \in V_{I_+}$, we have $\theta_j - \theta_i \in (\pi / 2 + 2\epsilon, 3\pi / 2 - 2\epsilon)$, so $(i, j) \in F$. Therefore, we have
$$\sum_{i \in V_{\overline{I_-}}, j \in V_{I_+}} r_i^2 r_j^2 \leq \delta \sum_{i, j \in V} r_i^2 r_j^2 = \delta \left(\sum_{i \in V} r_i^2\right)^2.$$
So we conclude that
$$\sum_{i \in V_{I_+}} r_i^2 \leq \sqrt{\delta} \sum_{i \in V} r_i^2.$$
Finally, observe that
$$V = F_{i^*} \cup V_{I_-} \cup V_{I_+} \cup V_I.$$
So we have
$$\sum_{i \in V_I} r_i^2 \geq \sum_{i \in V} r_i^2 - \sum_{i \in F_{i^*}} r_i^2 - \sum_{i \in V_{I_-}} r_i^2 - \sum_{i \in V_{I_+}} r_i^2 \geq (1 - 3\sqrt{\delta}) \sum_{i \in V} r_i^2$$
as desired.
\end{proof}
We are now ready to establish the main inequality.
\begin{theorem}
   Given our setup, we have
   $$\sum_{ij \notin E} \xX_{ij}^2 \geq \frac{1}{\omega + 50 \omega^{5/6}} \sum_{i, j\in V} \xX_{ij}^2.$$
\end{theorem}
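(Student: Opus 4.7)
The strategy is to combine Lemma~\ref{lem:MS-almost} (the $(\pi/2 + \epsilon)$-Motzkin-Straus inequality) with Lemma~\ref{lem:main-cluster} (the clustering lemma), choosing both parameters $\epsilon$ and $\delta$ of order $\omega^{-1/3}$; in fact $\epsilon = \delta = \omega^{-1/3}$ will suffice. For $\omega$ bounded by an absolute constant the bound follows immediately from the much stronger Lemma~\ref{lem:chivec_line}, so I may assume $\omega$ is large enough that $\epsilon, \delta \in (0, 0.05)$ and the parameter inequalities below are comfortable.

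The crucial use of the constraint $\xX_G \geq 0$ from \eqref{sdp5-mod2} is this: for every edge $ij \in E$ one has $\xX_{ij} = r_i r_j \cos(\theta_i - \theta_j) \geq 0$, so every pair $(i, j) \in F$ (where $\cos(\theta_i - \theta_j) < -\epsilon$) is automatically a non-edge. Consequently
\[
    \sum_{ij \notin E} \xX_{ij}^2 \;\geq\; \sum_{(i,j) \in F} \xX_{ij}^2.
\]
I then split on the size of the $F$-sum. In \emph{Case A}, if $\sum_{(i,j) \in F} \xX_{ij}^2 \geq \delta \epsilon^2 \sum_{i,j \in V} \xX_{ij}^2$, then since $\delta \epsilon^2 \omega \geq 1 / (1 + 50\omega^{-1/6})$ by our parameter choice, the theorem follows at once.

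In \emph{Case B}, Lemma~\ref{lem:main-cluster} supplies an interval $I$ of length $\pi/2 + 2\epsilon$ with $\sum_{i \in V_I} r_i^2 \geq (1 - 3\sqrt{\delta}) \sum_{i \in V} r_i^2$. Since $G[V_I]$ has clique number at most $\omega$, applying Lemma~\ref{lem:MS-almost} to the principal submatrix of $\xX$ on $V_I$ (still PSD, rank $\leq 2$, with vectors in a cone of angle $\pi/2+2\epsilon$) yields
\[
    \sum_{ij \notin E} \xX_{ij}^2 \;\geq\; \sum_{i, j \in V_I,\, ij \notin E} \xX_{ij}^2 \;\geq\; \frac{1}{(1 + 10\sqrt{\epsilon})\omega} \sum_{i, j \in V_I} \xX_{ij}^2.
\]
The remaining step is to establish $\sum_{i, j \in V_I} \xX_{ij}^2 \geq (1 - 24\sqrt{\delta}) \sum_{i, j \in V} \xX_{ij}^2$. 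For this I would exploit the Fourier identity $\sum \xX_{ij}^2 = \tfrac{1}{2}(S^2 + T^2)$ with $S = \sum_{i \in V} r_i^2$ and $T = |\sum_{i \in V} r_i^2 e^{\ii 2 \theta_i}|$, and the analogs $S_I, T_I$ for $V_I$: the clustering estimate gives $S_I \geq (1 - 3\sqrt{\delta}) S$ directly, while the triangle inequality $T \leq T_I + |\sum_{i \notin V_I} r_i^2 e^{\ii 2\theta_i}| \leq T_I + 3\sqrt{\delta} S$ yields $T_I \geq T - 3\sqrt{\delta} S$. Squaring both, summing, and invoking $S^2 \leq 2(S^2 + T^2)$ from Lemma~\ref{lem:norm} gives the claimed ratio.

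\textbf{Main obstacle.} Once this framework is set up, the only real difficulty is to balance the three loss terms --- the factor $1 + 10\sqrt{\epsilon}$ from the near-quadrant Motzkin-Straus step, the factor $1 - 24\sqrt{\delta}$ from passing from $V$ to $V_I$, and the threshold $\delta \epsilon^2 \omega$ separating the two cases --- so that they all fit within the $50 \omega^{-1/6}$ budget allowed by the theorem. With $\epsilon = \delta = \omega^{-1/3}$ the combined first-order loss is $(10 + 24) \omega^{-1/6} = 34\omega^{-1/6}$, comfortably below $50\omega^{-1/6}$, so the constant $50$ in the statement is pessimistic and leaves genuine slack.
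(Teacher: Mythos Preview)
Your proposal is correct and follows essentially the same approach as the paper: the paper also sets $\epsilon = \delta = \omega^{-1/3}$, uses $\xX_G \geq 0$ to force $F$ into the non-edges, invokes Lemma~\ref{lem:chivec_line} to dispose of small $\omega$, and combines Lemma~\ref{lem:main-cluster} with Lemma~\ref{lem:MS-almost}. The only cosmetic differences are that the paper argues by contradiction rather than an explicit case split, and bounds $\sum_{i,j\in V}\xX_{ij}^2 - \sum_{i,j\in V_I}\xX_{ij}^2$ directly via $\xX_{ij}^2 \leq r_i^2 r_j^2$ (obtaining $12\omega^{-1/6}$) rather than through the Fourier identity you sketch---both routes give the same constant and fit within the $50\omega^{-1/6}$ budget.
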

\begin{proof}
Suppose for the sake of contradiction that
$$\sum_{i, j\in V, ij \notin E} \xX_{ij}^2 < \frac{1}{\omega + 50 \omega^{5/6}} \sum_{i, j\in V} \xX_{ij}^2.$$
By Lemma~\ref{lem:chivec_line}, we have $50 \omega^{5/6} \leq (C - 1) \omega$, where $C \leq 1.5$, so $\omega^{1/6} \geq 30$.

Take $\epsilon = \delta = \omega^{-1/3} \in (0, 0.05)$. Let $F \subset V \times V$ be as in Lemma~\ref{lem:main-cluster}. By assumption, any pair of vertices $(i, j) \in F$ is not an edge in $G$, so 
$$\sum_{(i, j) \in F} \xX_{ij}^2 \leq \sum_{i, j\in V, ij \notin E} \xX_{ij}^2 \leq \frac{1}{\omega} \sum_{i, j\in V} \xX_{ij}^2 = \epsilon^2 \delta \sum_{i, j\in V} \xX_{ij}^2.$$
By Lemma~\ref{lem:main-cluster}, there exists some interval $I \subset \rR / 2\pi \zZ$ of length $\pi/2 + 2\epsilon$ such that
$$\sum_{i \in V_I} r_i^2 \geq (1 - 3\omega^{-1/6}) \sum_{i \in V} r_i^2.$$
By Lemma~\ref{lem:MS-almost}, we have
$$\sum_{i, j\in V, ij \notin E} \xX_{ij}^2 \geq \frac{1}{(1 + 10 \omega^{-1/6})\omega} \sum_{i, j \in V_I} \xX_{ij}^2.$$
Furthermore, observe that
$$\sum_{i, j \in V} \xX_{ij}^2 - \sum_{i, j \in V_I} \xX_{ij}^2 \leq 2\sum_{i \in V \backslash V_I} r_i^2 \sum_{j \in V} r_j^2 \leq 6\omega^{-1/6} \left(\sum_{i \in V} r_i^2\right)^2$$
By Lemma~\ref{lem:norm}, we have
$$\sum_{i, j \in V} \xX_{ij}^2 - \sum_{i, j \in V_I} \xX_{ij}^2 \leq 12 \omega^{-1/6} \sum_{i, j \in V} \xX_{ij}^2.$$
So we conclude that
$$\sum_{i, j\in V, ij \notin E} \xX_{ij}^2 \geq \frac{1 - 12 \omega^{-1/6}}{(1 + 10 \omega^{-1/6})\omega } \sum_{i, j \in V} \xX_{ij}^2 > \frac{1}{(1 + 50 \omega^{-1/6})\omega } \sum_{i, j \in V} \xX_{ij}^2$$
contradiction.
\end{proof}
\subsection{Lower Bounds}
\label{sec:counterexamples}

In this section, we prove our lower bounds on $\chi_{\mathrm{vec}, 3}(G)$ and $\chi_{\mathrm{vec}, 3}'(G)$.

We start by explicitly showing that $\chi_{\mathrm{vec}, 3}'(C_5) > 2$.
\begin{lemma}
We have $\chi_{\mathrm{vec}, 3}'(C_5) > 2.099$.
\end{lemma}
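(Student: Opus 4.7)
The plan is to exhibit an explicit rank-$2$ PSD matrix $\xX$ that is feasible for the program~\eqref{sdp5-mod1} defining $\chi_{\mathrm{vec},3}'(C_5)$ and attains objective value strictly greater than $2.099$. Concretely, I place the five vertices at equally spaced points on a circle in $\rR^2$: set $\vv_i = t\,(\cos\psi_i, \sin\psi_i)$ for angles $\psi_0,\dots,\psi_4$ and a scale $t>0$ to be chosen, and let $\xX = \vV \vV^T$, where $\vV$ is the $5 \times 2$ matrix with rows $\vv_i$. Then $\xX$ is PSD of rank at most $2$ and $(\xX \circ \xX)_{ij} = t^4 \cos^2(\psi_i - \psi_j)$.

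The naive pentagon placement $\psi_i = 2\pi i/5$ is the wrong ansatz, because it makes the edge pairs of $C_5$ (consecutive vertices) correspond to the smaller angular gap $2\pi/5$ and the non-edge pairs (skip-one vertices) to $4\pi/5$. Since $\cos^2(2\pi/5) < \cos^2(4\pi/5)$, this puts large weight on non-edge entries, which are exactly the ones appearing in the denominator constraint $\langle \iI + \overline{\aA}, \xX \circ \xX \rangle = 1$. I use instead the \emph{pentagram} placement $\psi_i = 4\pi i / 5$, so that edges of $C_5$ now have angular gap $4\pi/5$ and non-edges have gap $8\pi/5 \equiv 2\pi/5 \pmod{2\pi}$; the roles of the two cosines swap.

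Using the exact identities $\cos^2(2\pi/5) = (3-\sqrt 5)/8$ and $\cos^2(4\pi/5) = (3+\sqrt 5)/8$, a direct computation gives
\[
\langle \jJ, \xX \circ \xX \rangle = t^4\left(5 + 10\cdot \tfrac{3+\sqrt 5}{8} + 10\cdot \tfrac{3-\sqrt 5}{8}\right) = \tfrac{25}{2}\, t^4,
\]
\[
\langle \iI + \overline{\aA}, \xX \circ \xX \rangle = t^4\left(5 + 10\cdot \tfrac{3-\sqrt 5}{8}\right) = \tfrac{35 - 5\sqrt 5}{4}\, t^4.
\]
Choosing $t>0$ so the second quantity equals $1$, the objective then equals
\[
\frac{25/2}{(35 - 5\sqrt 5)/4} = \frac{10}{7 - \sqrt 5} = \frac{5(7+\sqrt 5)}{22} \approx 2.0991 > 2.099,
\]
which proves the lemma. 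The only obstacles are bookkeeping: checking that the five edges and five non-edges of $C_5$ indeed pair up with angular gaps $4\pi/5$ and $2\pi/5$ respectively in the pentagram ordering, and confirming the closed forms of $\cos^2(2\pi/5)$ and $\cos^2(4\pi/5)$. No conceptual difficulty remains once the pentagram ansatz is identified.
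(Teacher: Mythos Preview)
Your proof is correct and essentially identical to the paper's. The paper places the five vertices at angles $0,\pi/5,2\pi/5,3\pi/5,4\pi/5$ while you use the pentagram spacing $4\pi i/5$; since $\cos^2(\pi/5)=\cos^2(4\pi/5)$ and $\cos^2(2\pi/5)=\cos^2(8\pi/5)$, the two choices of $\xX$ yield the same Schur square $\xX\circ\xX$ and hence the same lower bound $\dfrac{5(7+\sqrt5)}{22}>2.099$.
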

\begin{proof}
    Let
    $$\vV = \begin{bmatrix}
        \cos(0) & \sin(0) \\
        \cos(\pi / 5) & \sin(\pi / 5) \\
        \cos(2\pi / 5) & \sin(2\pi / 5) \\
        \cos(3\pi / 5) & \sin(3\pi / 5) \\
        \cos(4\pi / 5) & \sin(4\pi / 5) \\
    \end{bmatrix}$$
    and
    $$\xX = \vV \vV^T.$$
    Observe that $\xX$ is rank $2$ and positive semidefinite. Its entries satisfy
    $$\abs{\xX_{ij}} = \begin{cases}
        1, \text{ if }i = j, \\
        \cos(\pi / 5), \text{ if }i - j \in \{\pm 1\},\\
        \cos(2\pi / 5), \text{ if } i - j \in \{\pm 2\}.
    \end{cases}$$
    where the indices are interpreted modulo $5$. Thus we have
    $$\chi_{\mathrm{vec}, 3}'(C_5) \geq \frac{\langle \jJ, \xX\circ\xX \rangle}{\langle \iI+\overline{\aA}, \xX\circ \xX\rangle} = 1 + \frac{10 \cos(\pi / 5)^2}{5 + 10 \cos(2\pi / 5)^2} > 2.099,$$
    as desired.
\end{proof}
Now we apply a blowup operation on $G = C_5$ to construct the desired $G$ for every even $\omega$. For a graph $G = (V, E)$, let $G^n$ denote the graph with vertex set $V^n = V \times [n]$, and edge set
$$E^n = \{\{(v, i), (w, j)\}: \{v, w\} \in E \text{ or } i \neq j\}.$$ 
We observe that:
\begin{proposition}
For any graph $G$, we have $\omega(G^n) = n \omega(G).$
\end{proposition}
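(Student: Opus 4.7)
The plan is to prove the equality by establishing both inequalities separately, exploiting the fact that the ``blowup'' $G^n$ decomposes naturally into $n$ copies of $G$ sitting inside it, one for each layer indexed by $[n]$.

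For the lower bound $\omega(G^n) \geq n\omega(G)$, I would take a maximum clique $K \subseteq V$ of $G$ (so $|K| = \omega(G)$) and consider the set $K \times [n] \subseteq V \times [n]$. Given two distinct elements $(v, i), (w, j) \in K \times [n]$, either $v = w$ (in which case $i \neq j$, so they are adjacent in $G^n$) or $v \neq w$ (in which case $\{v, w\} \in E(G)$ since $K$ is a clique, so again they are adjacent). Hence $K \times [n]$ is a clique in $G^n$ of size $n\omega(G)$.

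For the upper bound $\omega(G^n) \leq n\omega(G)$, I would take an arbitrary clique $S \subseteq V \times [n]$ in $G^n$ and slice it by the second coordinate, writing $S = \bigsqcup_{i=1}^n S_i$ with $S_i = S \cap (V \times \{i\})$. For any two distinct vertices $(v, i), (w, i) \in S_i$, they must be adjacent in $G^n$; since $i = j$, the only way this can happen is $\{v, w\} \in E(G)$. Therefore the projection of $S_i$ to $V$ is a clique in $G$, giving $|S_i| \leq \omega(G)$. Summing over $i$ yields $|S| \leq n\omega(G)$, and the equality follows.

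There is really no substantial obstacle here; the proof is bookkeeping once one notes the clean structural observation that the intersection of a clique of $G^n$ with any single layer $V \times \{i\}$ projects to a clique of $G$. As an aside, an equivalent (and even shorter) way to phrase the argument is to observe that $\overline{G^n}$ is isomorphic to the disjoint union of $n$ copies of $\overline{G}$, so $\omega(G^n) = \alpha(\overline{G^n}) = n\alpha(\overline{G}) = n\omega(G)$.
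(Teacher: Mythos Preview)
Your proof is correct and follows essentially the same approach as the paper: both directions are handled by slicing by the second coordinate, using that $K \times [n]$ is a clique for any clique $K$ in $G$, and that each layer of a clique in $G^n$ projects to a clique in $G$. Your complement observation is a valid shortcut, but it is not used in the paper.
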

\begin{proof}
 On one hand, if $C$ is a clique in $G$, then $C \times [n]$ is a clique in $G^n$, so $\omega(G^n) \geq n \omega(G)$. On the other hand, let $C$ be a maximum clique in $G^n$. Let $C_i$ be the vertices in $C$ with second coordinate equal to $i$. Then $C_i$ must form a clique in $G$, so we have
 $$\omega(G^n) = \abs{C} = \sum_{i = 1}^n \abs{C_i} \leq n \omega(G)$$
 as desired.
\end{proof}
\begin{proposition}
For any graph $G$, we have
$\chi_{\mathrm{vec}, 3}'(G^n) \geq n \chi_{\mathrm{vec}, 3}'(G)$. 
\end{proposition}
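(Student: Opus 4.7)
The plan is to lift an optimal feasible solution for the SDP defining $\chi_{\mathrm{vec}, 3}'(G)$ to a feasible solution for the analogous SDP on $G^n$ via a blowup construction, and then to show that the objective value scales by a factor of exactly $n$ after renormalization.

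Concretely, let $\xX$ be a rank-two PSD matrix indexed by $V(G)$ that is feasible for program \eqref{sdp5-mod1} and attains $\chi_{\mathrm{vec}, 3}'(G)$. Define $\xX' = \xX \otimes \jJ_n$, i.e., $\xX'_{(v,i),(w,j)} = \xX_{v,w}$ for all $(v,i),(w,j) \in V^n$. Since $\jJ_n$ is a rank-one PSD matrix and tensor products of PSD matrices are PSD with rank equal to the product of ranks, $\xX' \succeq 0$ and $\mathrm{rank}(\xX') \leq 2$. Equivalently, if $\xX = \vv_1\vv_1^\T + \vv_2\vv_2^\T$ then $\xX' = (\vv_1 \otimes \1_n)(\vv_1 \otimes \1_n)^\T + (\vv_2 \otimes \1_n)(\vv_2 \otimes \1_n)^\T$.

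Next I would compute the two inner products governing the SDP. For the numerator,
\[
\langle \jJ_{V^n}, \xX' \circ \xX' \rangle \;=\; \sum_{v,w \in V}\sum_{i,j \in [n]} \xX_{v,w}^2 \;=\; n^2 \langle \jJ_V, \xX \circ \xX \rangle.
\]
For the denominator, I need to identify the non-edges of $G^n$. By the definition of the blowup, $\{(v,i),(w,j)\}$ is a non-edge of $G^n$ precisely when $v \neq w$, $\{v,w\} \notin E(G)$, and $i = j$. Adding the diagonal contribution from $\iI$, I get
\[
\langle \iI + \overline{\aA(G^n)}, \xX' \circ \xX' \rangle
\;=\; n \sum_{v \in V} \xX_{v,v}^2 + n\!\!\sum_{v\neq w,\,vw \notin E(G)}\!\! \xX_{v,w}^2
\;=\; n \langle \iI + \overline{\aA(G)}, \xX \circ \xX \rangle \;=\; n,
\]
using the feasibility constraint on $\xX$.

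Finally, I rescale: setting $\widetilde{\xX} = n^{-1/2} \xX'$ preserves rank and positive semidefiniteness, yields $\langle \iI+\overline{\aA(G^n)}, \widetilde{\xX} \circ \widetilde{\xX}\rangle = 1$, and gives objective value $n^{-1} \cdot n^2 \langle \jJ_V, \xX \circ \xX\rangle = n\, \chi_{\mathrm{vec},3}'(G)$, which is a valid lower bound on $\chi_{\mathrm{vec},3}'(G^n)$. No step here presents a real obstacle; the only things to double-check are that the tensor product keeps the rank at most two and that the non-edges of $G^n$ have been identified correctly (in particular that two copies of the same vertex are always adjacent in $G^n$), both of which follow directly from the definition of $G^n$.
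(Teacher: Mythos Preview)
Your proof is correct and essentially identical to the paper's: both lift a feasible $\xX$ for $G$ to $\xX\otimes\jJ_n$ (the paper builds in the $n^{-1/2}$ scaling up front, you rescale at the end), verify the rank, and compute the two inner products using the same identification of the non-edges of $G^n$. The only cosmetic difference is that the paper works with an arbitrary feasible $\xX$ and takes the supremum at the end, whereas you assume an optimizer exists; this is harmless here since the feasible region is compact.
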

\begin{proof}
    Let $\xX$ be any matrix satisfying the condition of the optimization problem defining $\chi_{\mathrm{vec}, 3}'(G)$. Let $\xX^n = \frac{1}{\sqrt{n}}\xX \otimes \jJ_n$. More explicitly, $\xX^n$ is indexed by $V \times [n]$, and the element at the $(v, i)$-th row and $(w, j)$-th column of $\xX^n$ is equal to $\frac{1}{\sqrt{n}}\xX_{v, w}$. By the property of the Kronecker product, $\xX^n$ is positive semidefinite and $\mathrm{rank}~\xX^n = \mathrm{rank}~\xX \cdot \mathrm{rank}~\jJ_n = 2$. Furthermore, we have
    $$\langle \iI + \bar{\aA}_{G^n}, \xX^n \circ \xX^n \rangle = \frac{1}{n} \cdot n \langle \iI + \bar{\aA}_{G}, \xX \circ \xX \rangle = 1.$$
    Thus we have
    $$\chi_{\mathrm{vec}, 3}'(G^n) \geq \langle \jJ, \xX^n \circ \xX^n \rangle = n \langle \jJ, \xX \circ \xX \rangle$$
    as desired.
\end{proof}

\begin{proof}[Proof of Proposition~\ref{prop:chi-3-lower}]
    Let $G = C_5^{\omega / 2}$. By the preceding lemmas, we have $\omega(G) = \omega / 2 \cdot \omega(C_5) = \omega$ and $\chi_{\mathrm{vec}, 3}'(G) \geq \omega \chi_{\mathrm{vec}, 3}'(C_5) / 2 \geq 2.099 \omega / 2 \geq 1.04 \omega$.
\end{proof}

\subsection{A quick proof for the regular case} \label{sec:quickregular}

For completeness, we include a concise proof of Conjecture~\ref{conj:bollobas_nikiforov} 
for regular graphs using our matrix language, based on the proof by the third author in \cite{zhang2024first}.

We assume the graph $G$ is not complete, so the two largest eigenvalues $\lambda_1$ and $\lambda_2$ are nonnegative. The proof starts similarly as the proof of Theorem~\ref{thm:main_result2}.
We define the matrices
	\[
		\xX = \lambda_1 (\vv_1\vv_1^\T) + \lambda_2 (\vv_2 \vv_2^\T)
	\]
and
\[
	\yY = \vv_1\vv_2^\T + \vv_2\vv_1^\T.
\]
It follows that 
\begin{align*}
(\xX \circ \xX) + \lambda_1 \lambda_2 (\yY \circ \yY) & = (\lambda_1 (\vv_1 \circ \vv_1) + \lambda_2 (\vv_2 \circ \vv_2))(\lambda_1 (\vv_1 \circ \vv_1) + \lambda_2 (\vv_2 \circ \vv_2))^\T \\
& + 4 \lambda_1 \lambda_2 (\vv_1 \circ \vv_2)(\vv_1 \circ \vv_2)^\T.
\end{align*}
Therefore this matrix is positive semidefinite, nonnegative, and has rank $2$. So it is completely positive, and we may apply \eqref{eq:ms} with $\zZ = (\xX \circ \xX) + \lambda_1 \lambda_2 (\yY \circ \yY)$. We obtain
\[
	\langle \aA,\xX \circ \xX \rangle + \lambda_1\lambda_2 \langle \aA,\yY \circ \yY\rangle \leq \left(1 - \frac{1}{\omega(G)} \right) \Big( \langle \jJ , \xX \circ \xX\rangle +  \lambda_1\lambda_2 \langle \jJ , \yY \circ \yY \rangle \Big). 
\]
Recall that $\langle \aA,\xX\rangle^2 \leq 2m \langle \aA , \xX \circ \xX \rangle$, so if
\[
	\langle \aA,\xX \circ \xX \rangle \leq \left(1 - \frac{1}{\omega(G)} \right) \langle \jJ , \xX \circ \xX\rangle,
\] 
then the conjecture would follow. So we assume otherwise, which implies
\[
	\langle \aA,\yY \circ \yY\rangle < \left(1 - \frac{1}{\omega(G)} \right)\langle \jJ , \yY \circ \yY \rangle = 2 \left(1 - \frac{1}{\omega(G)} \right).
\]
We now assume $G$ is $k$-regular, thus $\vv_1$ is the all $1$s vectors normalized and $\lambda_1 = k$. Hence
\[
	\frac{1}{2}\langle \aA , \yY \circ \yY \rangle = \langle \aA , (\vv_1 \circ \vv_1)(\vv_2 \circ \vv_2)^\T + (\vv_1 \circ \vv_2)(\vv_1 \circ \vv_2)^\T \rangle = \frac{k + \lambda_2}{n}.
\]
Therefore
\[
	\lambda_1^2 + \lambda_2^2 \leq k (k + \lambda_2) \leq \left(1 - \frac{1}{\omega(G)} \right) k n = \left(1 - \frac{1}{\omega(G)} \right) 2m,
\]
as we wanted.

\section{Vertex weights} \label{sec:vertexweights}

The well known theory of the polyhedra \texttt{STAB} and \texttt{QSTAB}, and the convex corner \texttt{TH}, relies on the extension of the parameters $\alpha$, $\vartheta$ and $\chi_f$ to their vertex weighted versions (see for instance \cite{LovaszGrotschelSchrijverTheta}). The formulation for the vertex weighted version of $\chi_\mathrm{vec}$ goes back to \cite{KnuthSandwich}.

Here we adopt the formulations for the vertex weighted versions of $\omega$ and $\chi_\mathrm{vec}$ introduced in \cite{MarcelAxiomatic}, and prove that both admit a formulation analogous to \eqref{sdp2}. Let $\ww \geq \0$ be a nonnegative vector in $\Rds^n$. Denote (as usual) by $\sqrt{\ww}$ the vector in $\Rds^n$ obtained from $\ww$ by taking the entrywise square root. Recall that if $G$ is fixed, then we denote by $\aA$ its adjacency matrix and by $\ov{\aA}$ the adjacency matrix of its complement graph.

\begin{theorem}[Theorems 5 and 14, Proposition 16, in \cite{MarcelAxiomatic}]
	 Let $G$ be a graph and $\ww \in \Rds^n_+$. Consider the program	
	\begin{align} \max \quad & \langle \sqrt{\ww}\sqrt{\ww}^\T, \zZ\rangle, \label{sdp3}\\
\textrm{subject to} \quad & \langle \iI, \zZ\rangle = 1, \nonumber\\
& \zZ \circ \overline{\aA} = 0, \nonumber\\
& \zZ \in \Kc \nonumber
\end{align}
If $\Kc$ is the doubly nonnegative cone, this program is equal to $\chi_{\mathrm{vec}}(G;\ww)$; and if $\Kc = \mathcal{CP}_n$, this program is equal to $\omega(G;\ww)$.
\end{theorem}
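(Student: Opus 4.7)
The plan is to establish the two equivalences separately, mirroring the techniques already used in this paper for the unweighted case (Lemma~\ref{lem:chivec} for $\chi_{\mathrm{vec}}$ and the De Klerk-Pasechnik characterization for $\omega$).

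I would begin with the completely positive case. For the lower bound on \eqref{sdp3}, pick a weighted maximum clique $C$ of $G$ (so $\omega(G;\ww) = W_C := \sum_{i \in C} w_i$), and set $\zZ = \xx \xx^\T$ with $x_i = \sqrt{w_i / W_C}$ for $i \in C$ and $x_i = 0$ otherwise. Then $\zZ \in \mathcal{CP}_n$, $\langle \iI, \zZ\rangle = 1$, $\zZ \circ \overline{\aA} = \0$ (since $C$ is a clique), and $\langle \sqrt{\ww}\sqrt{\ww}^\T, \zZ\rangle = (\sqrt{\ww}^\T \xx)^2 = W_C$. For the upper bound, decompose a feasible $\zZ \in \mathcal{CP}_n$ as $\zZ = \sum_k \xx_k \xx_k^\T$ with $\xx_k \geq \0$. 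Since $\zZ \circ \overline{\aA} = \0$, each $\xx_k$ must be supported on a clique $C_k$ of $G$. Then
\[ \langle \sqrt{\ww}\sqrt{\ww}^\T, \zZ\rangle = \sum_k \Bigl(\sum_{i \in C_k} \sqrt{w_i}\,(\xx_k)_i\Bigr)^2 \leq \sum_k \lVert \xx_k \rVert^2 \sum_{i \in C_k} w_i \leq \omega(G;\ww)\,\langle \iI, \zZ \rangle \]
by Cauchy-Schwarz on the support $C_k$, which closes the bound.

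For the doubly nonnegative case, the idea is to adapt the perturbation trick from Lemma~\ref{lem:chivec}. The naive rank-one perturbation $(\ee_u - \ee_v)(\ee_u - \ee_v)^\T$ no longer preserves the weighted objective, since its inner product with $\sqrt{\ww}\sqrt{\ww}^\T$ equals $(\sqrt{w_u} - \sqrt{w_v})^2$. The natural fix is to use instead the rank-one PSD matrix
\[ \bM_{uv} = (\sqrt{w_v}\,\ee_u - \sqrt{w_u}\,\ee_v)(\sqrt{w_v}\,\ee_u - \sqrt{w_u}\,\ee_v)^\T, \]
which is orthogonal to $\sqrt{\ww}\sqrt{\ww}^\T$ and still has a nonpositive $uv$-entry. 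Starting from a feasible $\zZ_0$ of a suitable relaxed program (analogous to \eqref{sdp2}, with $\langle \iI, \zZ\rangle = 1$ replaced by an appropriate normalization involving $\overline{\aA}$ and the weights), adding the right positive multiple of $\bM_{uv}$ for each $uv \in S := \{uv \in E(\overline{G}) : (\zZ_0)_{uv} > 0\}$ produces a matrix $\widetilde{\zZ}_0$ that is still doubly nonnegative, has $\widetilde{\zZ}_0 \circ \overline{\aA} = \0$, preserves $\langle \sqrt{\ww}\sqrt{\ww}^\T, \zZ\rangle$, and whose trace can be tracked exactly in terms of the weights and the erased off-diagonal entries. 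Appropriate rescaling then gives a feasible solution of \eqref{sdp3} with the correct objective value, and comparison with the standard SDP for $\chi_{\mathrm{vec}}(G;\ww)$ from~\cite{KnuthSandwich,MarcelAxiomatic} (which is straightforward in the reverse direction, since every feasible point of that SDP is feasible for \eqref{sdp3}) closes the equivalence.

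The main obstacle is the bookkeeping for this second step: unlike in Lemma~\ref{lem:chivec}, the modified perturbation $\bM_{uv}$ does not have unit diagonal contributions, so the normalization $\langle \iI, \zZ\rangle = 1$ interacts nontrivially with the weights. Verifying that, after perturbation and rescaling, one lands in the feasible region of the standard weighted $\chi_{\mathrm{vec}}$ SDP with an objective value that matches is the technical heart of the argument. Once this is checked, both halves of the theorem follow from routine manipulations that parallel the unweighted proofs already given in Section~\ref{sec:2}.
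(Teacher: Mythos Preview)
This theorem is not proved in the paper; it is quoted verbatim from \cite{MarcelAxiomatic} (as the bracketed attribution indicates) and used as a black box. The paper's own contribution is the \emph{next} theorem (Theorem~\ref{thm:weighted}), where the perturbation trick is deployed to show that \eqref{sdp3} and \eqref{sdp4} have the same optimum. So there is no proof in the paper to compare your proposal against.

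That said, a brief assessment of your proposal on its own merits. Your argument for the $\mathcal{CP}_n$ case is clean and correct: the key observation that $\zZ \circ \overline{\aA} = \0$ forces each $\xx_k$ in a CP decomposition to be supported on a clique (since all summands $(\xx_k)_u(\xx_k)_v$ are nonnegative and must sum to zero) is exactly right, and Cauchy--Schwarz on each clique finishes it.

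Your DNN sketch, however, is aimed at the wrong target. The perturbation $\bM_{uv}$ you describe is the tool for passing between \eqref{sdp3} and a relaxed program \emph{without} the support constraint $\zZ \circ \overline{\aA} = \0$ --- that is precisely what the paper does in Theorem~\ref{thm:weighted}. But the statement you are asked to prove is that \eqref{sdp3} equals $\chi_{\mathrm{vec}}(G;\ww)$, and the standard formulation of $\chi_{\mathrm{vec}}(G;\ww)$ in \cite{KnuthSandwich,MarcelAxiomatic} \emph{already} carries the support constraint. The equivalence in question is therefore not about removing $\zZ \circ \overline{\aA} = \0$; it is about identifying the objective $\langle \sqrt{\ww}\sqrt{\ww}^\T, \zZ\rangle$ subject to $\langle \iI,\zZ\rangle = 1$ with whatever normalization the reference uses for the weighted vector chromatic number (typically a dual/minimization form, or a vector-coloring form). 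Your perturbation argument does not address that identification, and the ``bookkeeping'' you flag as the main obstacle is in fact bookkeeping for a different theorem.
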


We show below how to adapt the trick from Lemma~\ref{lem:chivec} to these formulations. If $\ww \in \Rds^n_+$, denote by $\dD(\ww)$ the diagonal matrix whose diagonal is equal to $\ww$, and if $\mM$ is a symmetric matrix, we introduce the notation
\[
	\mM_{\ww} = \frac{\dD(\sqrt{\ww})~\mM~\dD(\sqrt{\ww})^{+} + \dD(\sqrt{\ww})^{+}~\mM~\dD(\sqrt{\ww})}{2},
\]
where $\dD(\ww)^{+}$ is the pseudo-inverse of $\dD(\ww)$.

\begin{theorem}\label{thm:weighted}
Let $G$ be a graph and $\ww \in \Rds^n_+$. Consider the program	
\begin{align} \max \quad & \langle \sqrt{\ww}\sqrt{\ww}^\T, \zZ\rangle \label{sdp4}\\
\textrm{subject to} \quad & \langle (\iI+ \overline{\aA})_\ww, \zZ \rangle = 1 \nonumber \\
& \zZ \in \Kc. \nonumber
\end{align}
If $\Kc$ is the doubly nonnegative cone, this program is equal to $\chi_{\mathrm{vec}}(G;\ww)$; and if $\Kc = \mathcal{CP}_n$, this program is equal to $\omega(G;\ww)$.\end{theorem}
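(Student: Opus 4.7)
My plan is to show that \eqref{sdp3} and \eqref{sdp4} have the same optimum for each cone $\Kc$, which, combined with the cited equality from \cite{MarcelAxiomatic}, yields the theorem. For the direction $\eqref{sdp3}\le\eqref{sdp4}$, I would observe that any feasible $\zZ$ for \eqref{sdp3} satisfies $\zZ\circ\ov{\aA}=\0$, and since $\ov{\aA}_\ww$ has zero diagonal and off-diagonal support contained in the non-edge set, this forces $\langle\ov{\aA}_\ww,\zZ\rangle=0$, hence $\langle(\iI+\ov{\aA})_\ww,\zZ\rangle=\langle\iI,\zZ\rangle=1$; so $\zZ$ is also feasible for \eqref{sdp4} with the same objective value.

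For the reverse inequality I adapt the perturbation argument of Lemma~\ref{lem:chivec} using the ``weighted difference'' vector
\[
\yy_{uv}:=\sqrt{\ww_v}\,\ee_u-\sqrt{\ww_u}\,\ee_v
\]
attached to each non-edge $uv$. The two key identities
\[
\sqrt{\ww}^{\T}\yy_{uv}=0, \qquad \yy_{uv}^{\T}(\iI+\ov{\aA})_\ww\,\yy_{uv}=0
\]
follow by direct computation from $(\ov{\aA}_\ww)_{uv}=(\ww_u+\ww_v)/(2\sqrt{\ww_u\ww_v})$, the second because the diagonal contribution $\ww_u+\ww_v$ exactly cancels the off-diagonal contribution $-(\ww_u+\ww_v)$. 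Given an optimum $\zZ_0$ of \eqref{sdp4}, define
\[
\widetilde{\zZ}_0:=\zZ_0+\sum_{\substack{uv\in\ov{E}\\(\zZ_0)_{uv}>0}}\frac{(\zZ_0)_{uv}}{\sqrt{\ww_u\ww_v}}\,\yy_{uv}\yy_{uv}^{\T}.
\]
By construction every non-edge off-diagonal entry of $\widetilde{\zZ}_0$ vanishes, and the two identities guarantee that $\langle\sqrt{\ww}\sqrt{\ww}^{\T},\widetilde{\zZ}_0\rangle=\langle\sqrt{\ww}\sqrt{\ww}^{\T},\zZ_0\rangle$ and $\langle\iI,\widetilde{\zZ}_0\rangle=\langle(\iI+\ov{\aA})_\ww,\zZ_0\rangle=1$. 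In the doubly nonnegative case $\widetilde{\zZ}_0$ is a sum of PSD matrices, and its entries are nonnegative (the perturbation only grows diagonals and exactly cancels the chosen off-diagonals), so $\widetilde{\zZ}_0$ is feasible for \eqref{sdp3}.

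The main obstacle is the completely positive case, since $\yy_{uv}\yy_{uv}^{\T}$ has a negative entry and so is not itself in $\mathcal{CP}_n$. To handle it I would start from a CP decomposition $\zZ_0=\sum_k\xx_k\xx_k^{\T}$ and verify that each perturbed summand remains completely positive. After the diagonal congruence $\mM\mapsto\dD(\sqrt{\ww})\,\mM\,\dD(\sqrt{\ww})$, which is an isomorphism of $\mathcal{CP}_n$ when $\ww>\0$, the problem reduces to the unweighted claim that for every $\xx\ge\0$ the matrix with diagonal entries $\xx_u\bigl(\1^{\T}\xx-\sum_{v\sim u}\xx_v\bigr)$ and off-diagonal entries $\xx_u\xx_v\,\mathbf{1}_{uv\in E}$ lies in $\mathcal{CP}_n$. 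I would prove this by constructing an explicit CP decomposition as a sum of rank-one terms supported on the cliques of $G$, exploiting the nonnegative diagonal slack $\sum_{v\not\sim u,\,v\neq u}\xx_v$ to solve the resulting fractional clique-cover feasibility problem, and handling degenerate $\ww_u=0$ coordinates by continuity.
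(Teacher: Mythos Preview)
Your overall strategy---show that \eqref{sdp3} and \eqref{sdp4} have the same optimum by a perturbation that kills non-edge entries---is exactly the paper's approach. Your vector $\yy_{uv}=\sqrt{\ww_v}\,\ee_u-\sqrt{\ww_u}\,\ee_v$ is, up to the scalar $\sqrt{\ww_u\ww_v}$, the paper's $\dD(\sqrt{\ww})^{+}(\ee_u-\ee_v)$, so the two constructions coincide. One small gap in your $\eqref{sdp3}\le\eqref{sdp4}$ direction: when some $\ww_u=0$ the matrix $\iI_{\ww}$ is only the projection onto $\mathrm{supp}(\ww)$, so $\langle(\iI+\ov{\aA})_\ww,\zZ\rangle=\langle\iI,\zZ\rangle$ requires first arguing (as the paper does) that an optimum $\zZ$ for \eqref{sdp3} has null rows and columns where $\ww$ vanishes.

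The substantive divergence is in the completely positive case. The paper handles it in one line by citing a known fact (\cite[Lemma~3.36]{shaked2021copositive}): if $\mM\in\mathcal{CP}_n$ then $\mM+\mM_{ij}(\ee_i-\ee_j)(\ee_i-\ee_j)^{\T}\in\mathcal{CP}_n$. Your reduction via the congruence $\mM\mapsto\dD(\sqrt{\ww})\mM\dD(\sqrt{\ww})$ to the unweighted per-vector statement is correct and clean, but the final step---``solve a fractional clique-cover feasibility problem''---is not a proof as stated: you have not exhibited the system or argued it is feasible, and in small examples (e.g.\ $G=P_3$) the constraints are already tight with no slack to spare. A much simpler route is to process one non-edge $\{i,j\}$ at a time: writing $\xx=a\ee_i+b\ee_j+\zz$ with $\zz\ge\0$ supported off $\{i,j\}$, one checks directly that
\[
\xx\xx^{\T}+ab(\ee_i-\ee_j)(\ee_i-\ee_j)^{\T}=\yy_1\yy_1^{\T}+\yy_2\yy_2^{\T},\qquad \yy_1=\sqrt{\tfrac{a}{a+b}}\bigl((a{+}b)\ee_i+\zz\bigr),\ \yy_2=\sqrt{\tfrac{b}{a+b}}\bigl((a{+}b)\ee_j+\zz\bigr),
\]
with $\yy_1,\yy_2\ge\0$. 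Iterating over all non-edges yields your matrix and, incidentally, a decomposition into clique-supported nonnegative rank-one terms---so your intuition is right, but the iterative split is what actually delivers it.
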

\begin{proof}
As we did in Lemma~\ref{lem:chivec}, our goal is to show that for both the doubly nonnegative and the completely positive cones, the program \eqref{sdp4} is equivalent to \eqref{sdp3}. 

If $\zZ_0$ is optimum for \eqref{sdp3}, it follows that rows and columns of $\zZ_0$ corresponding $0$ entries of $\ww$ are equal to $\0$, and in this case it is clearly a feasible solution for \eqref{sdp4} with the same objective value.

If $\zZ_0$ is an optimum solution for \eqref{sdp4}, we may assume that $\zZ_0$ has null rows and columns where $\ww$ has $0$ entries. Let $S$ denote the set of edges $uv$ in $E(\overline{G})$ for which $(\zZ_0)_{uv} > 0$, and $\ee_u$ the characteristic vector of vertex $u$. Then take
\[
\widetilde{\zZ}_0:=\zZ_0 + \dD(\sqrt{\ww})^{+}\left(\displaystyle\sum_{uv\in S}(\zZ_0)_{uv}(\ee_u-\ee_v)(\ee_u-\ee_v)^\T\right)\dD(\sqrt{\ww})^{+}.
\]
Note that
\begin{enumerate}[(a)]
	\item $\langle ~\sqrt{\ww}\sqrt{\ww}^\T~,~\displaystyle\sum_{uv\in S}(\zZ_0)_{uv}\dD(\sqrt{\ww})^{+}(\ee_u-\ee_v)(\ee_u-\ee_v)^\T \dD(\sqrt{\ww})^{+}~\rangle = 0$, thus
	\[\langle \sqrt{\ww}\sqrt{\ww}^\T,\zZ_0 \rangle = \langle \sqrt{\ww}\sqrt{\ww}^\T, \widetilde{\zZ}_0\rangle. \]
	\item $\widetilde{\zZ}_0 \circ \ov{\aA} = 0$,
	\item Looking at vertices $u$ and $v$ with $uv \in E(\ov{G})$ where $\ww$ is nonzero, note that
	\[
		\frac{1}{\ww_u} + \frac{1}{\ww_v} - \frac{1}{\sqrt{\ww_u\ww_v}}\left(\frac{\sqrt{\ww_u}}{\sqrt{\ww_v}} + \frac{\sqrt{\ww_v}}{\sqrt{\ww_u}}\right) = 0,
	\]
	so $\langle~(\iI + \ov{\aA})_\ww~,~\displaystyle\sum_{uv\in S}(\zZ_0)_{uv}\dD(\sqrt{\ww})^{+}(\ee_u-\ee_v)(\ee_u-\ee_v)^\T \dD(\sqrt{\ww})^{+}~\rangle = 0$, therefore, by (b),
	\[ \langle(\iI + \ov{\aA})_\ww, \zZ_0 \rangle = \langle \iI , \widetilde{\zZ}_0 \rangle\]
	\item If $\zZ_0$ is doubly nonnegative, then so is $\widetilde{\zZ}_0$ (this is quite straightforward),
	\item If $\zZ_0$ is completely positive, then so is $\widetilde{\zZ}_0$ (this follows from a known result, see for instance \cite[Lemma 3.36]{shaked2021copositive}).
\end{enumerate}

Therefore $\widetilde{\zZ}_0$ is a feasible solution for \eqref{sdp3} with the same objective value as $\zZ_0$ in \eqref{sdp4}.	
\end{proof}

In \cite[Theorem 5]{gibbons1997continuous}, a version of the Motzkin-Straus characterization for the weighted clique number was presented. As a consequence of Theorem~\ref{thm:weighted}, we can very easily recover this formulation.

\begin{corollary}
	Given a graph $G$ and for any $\ww \geq \vec 0$, if
	\[
		\vec{W} = \dD(\ww)^{+} + \frac{1}{2}(\ov{\aA}~ \dD(\ww)^+ + \dD(\ww)^+ ~\ov{\aA}),
	\]
	then
	\[
		\frac{1}{\omega(G;\ww)} = \min \{\vec v^\T \vec{W} \vec v : \1^\T \vec v = 1, \vec v \geq \vec 0.\}
	\] 
\end{corollary}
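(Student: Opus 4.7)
The plan is to apply Theorem~\ref{thm:weighted} with $\Kc = \mathcal{CP}_n$ and then reduce the resulting completely positive program to a quadratic program over the simplex. Concretely, Theorem~\ref{thm:weighted} gives
\[
    \omega(G;\ww) = \max\{\langle \sqrt{\ww}\sqrt{\ww}^\T, \zZ\rangle : \langle (\iI + \ov{\aA})_\ww, \zZ\rangle = 1,\ \zZ \in \mathcal{CP}_n\},
\]
which by rescaling $\zZ$ (both sides are linear and homogeneous in $\zZ$) is equivalent to
\[
    \frac{1}{\omega(G;\ww)} = \min\{\langle (\iI + \ov{\aA})_\ww, \zZ\rangle : \langle \sqrt{\ww}\sqrt{\ww}^\T, \zZ\rangle = 1,\ \zZ \in \mathcal{CP}_n\}.
\]

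Next I would argue that this minimum is attained at a rank-$1$ element of $\mathcal{CP}_n$, exploiting the fact that its extreme rays are spanned by matrices $\xx\xx^\T$ with $\xx \geq \0$. Writing a feasible $\zZ = \sum_i \xx_i \xx_i^\T$ and setting $b_i = (\xx_i^\T \sqrt{\ww})^2$ and $c_i = \xx_i^\T (\iI + \ov{\aA})_\ww \xx_i$, the constraint becomes $\sum_i b_i = 1$ and the objective becomes $\sum_i c_i$. For each $i$ with $b_i > 0$ the rescaled rank-$1$ matrix $\xx_i \xx_i^\T / b_i$ is feasible with value $c_i/b_i \geq m^*$, where $m^*$ denotes the minimum of the program; summing yields $\sum_i c_i \geq m^*$, with equality attainable at a rank-$1$ solution. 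Components with $b_i = 0$ have $\xx_i$ supported on $\{j : \ww_j = 0\}$, which forces $c_i = 0$ as well, so they can be discarded.

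Finally, I would parametrize each rank-$1$ feasible solution $\zZ = \xx \xx^\T$ by $\vec v = \xx \circ \sqrt{\ww} \geq \vec 0$. The constraint reduces to $\1^\T \vec v = \xx^\T \sqrt{\ww} = 1$, and using $\dD(\sqrt{\ww})\xx = \vec v$ and $\dD(\sqrt{\ww})^+ \xx = \dD(\ww)^+ \vec v$ on $\mathrm{supp}(\ww)$, a direct calculation expanding $(\cdot)_\ww$ and symmetrizing gives
\[
    \langle (\iI + \ov{\aA})_\ww, \xx \xx^\T \rangle = \vec v^\T \left( \dD(\ww)^+ + \tfrac{1}{2}(\ov{\aA}\,\dD(\ww)^+ + \dD(\ww)^+\,\ov{\aA}) \right) \vec v = \vec v^\T \vec W \vec v.
\]
Every nonnegative $\vec v$ with $\1^\T \vec v = 1$ supported on $\mathrm{supp}(\ww)$ arises this way, so combining the three steps gives the claimed identity.

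The main technical point to be careful about is the handling of vertices with $\ww_j = 0$: there the parametrization $\vec v = \xx \circ \sqrt{\ww}$ is not invertible, the pseudo-inverses require a consistent interpretation, and moreover the raw minimization of $\vec v^\T \vec W \vec v$ over the full simplex is too permissive since one could concentrate $\vec v$ on $\{j : \ww_j = 0\}$ and drive the objective to $0$. The cleanest path is to reduce to $\ww > \vec 0$ by deleting zero-weight vertices (which affects neither $\omega(G;\ww)$ nor the minimum once the optimization is correspondingly restricted) and then perform the algebraic identification in the strictly positive regime, where all pseudo-inverses become honest inverses.
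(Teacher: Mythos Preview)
Your proof is correct and follows the same route as the paper: reduce the completely positive program \eqref{sdp4} to its rank-$1$ extreme rays, then perform the change of variables $\vec v = \dD(\sqrt{\ww})\,\xx$ (equivalently, the paper writes the inverse substitution $\uu = \dD(\sqrt{\ww})^+ \vv$). The paper compresses the first step into the phrase ``it follows easily from \eqref{sdp4}'', whereas you spell out the convexity argument showing the optimum is attained at rank $1$; you are also more careful than the paper about the zero-weight vertices, correctly noting that the stated minimization must implicitly be restricted to $\mathrm{supp}(\ww)$ (or equivalently to $\ww > \vec 0$) for the identity to hold.
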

\begin{proof}
	If follows easily from \eqref{sdp4} that,
	\[
		\frac{1}{\omega(G;\ww)} = \min \{\vec v^\T ~(\iI+\ov{\aA})_\ww~\vec v : \sqrt{\ww}^\T \vec v = 1, \vec v \geq \vec 0\}.
	\] 
	The result is now obtained upon making $\uu = \dD(\sqrt{\ww})^+ \vv$.
\end{proof}

From our work in Section~\ref{sec:2}, another consequence of Theorem~\ref{thm:weighted} is that, if $\xX \in \Kc$ and $\nu_\Kc(G;\ww)$ is defined as the optimum in \eqref{sdp4}, then
\begin{align}
	\langle \aA_\ww , \xX \rangle \leq \langle \jJ_\ww , \xX \rangle - \frac{1}{\nu_\Kc(G;\ww)} \langle \sqrt{\ww} \sqrt{\ww}^\T,\xX\rangle.
\end{align}
For instance, choosing $\vv$ and $\ww$ so that $\vv \circ \ww$ is the eigenvector corresponding to the largest eigenvalue of $\aA$, and making $\xX = \dD(\sqrt{\ww}) \vv \vv^\T \dD(\sqrt{\ww})$, this implies
\begin{align}
	\lambda_1(\aA) \langle \ww , \vv^{\circ 2}\rangle \leq \langle \1,\vv\rangle \langle \ww , \vv \rangle - \frac{1}{\omega(G;\ww)} \langle \ww , \vv \rangle^2,
\end{align}
which can be used to derive eigenvalue bounds for $\omega(G;\ww)$.

\section{Final remarks}
\subsection{Open problems}
By Lemma~\ref{lem:chivec_line_weak} and Proposition~\ref{prop:chi-3-lower}, for every even $\omega$ we have
$$1.04 \omega \leq \sup_{G: \omega(G) = \omega}\chi_{\mathrm{vec}, 3}'(G) \leq \sup_{G: \omega(G) = \omega} \chi_{\mathrm{vec}, 3}(G) \leq 4\omega.$$ 
This prompts the question of determining the optimal coefficient of the linear relation.
\begin{problem}
    Determine the optimal constant $C$ such that
    $\chi_{\mathrm{vec}, 3}(G) \leq C\omega(G)$
    holds for all graphs $G$. Do the same for $\chi_{\mathrm{vec}, 3}'(G)$.    
\end{problem}
In contrast, Lemma~\ref{lem:chi-vec-doubleprime-upper} shows that $\chi_{\mathrm{vec}, 3}''(G)$ is bounded above by $\omega(G) + o(\omega(G))$, while it is bounded below by $\omega(G)$. Based on this behavior, we conjecture that $\chi_{\mathrm{vec}, 3}''(G)$ is equal to $\omega(G)$, at least when $\omega(G)$ is large.
\begin{conjecture}
    For sufficiently large $\omega$ and all graphs $G$ with clique number $\omega$, we have $$\chi_{\mathrm{vec}, 3}''(G) = \omega.$$
\end{conjecture}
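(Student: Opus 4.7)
The plan is to refine the analysis behind Lemma~\ref{lem:chi-vec-doubleprime-upper}, closing the $O(\omega^{5/6})$ slack by exploiting the constraint $\xX_G \geq 0$ in an essential way.

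Let $G$ be a graph with $\omega(G) = \omega$ large, and suppose toward contradiction that some $\xX$ attains $\chi''_{\mathrm{vec},3}(G) > \omega$. Since the $\mathrm{rank} \leq 1$ case of \eqref{sdp5-mod2} reduces to the De~Klerk--Pasechnik program with value exactly $\omega$, I may assume $\mathrm{rank}\,\xX = 2$ and write $\xX = \vV\vV^\T$ with rows $\vv_i = r_i \e^{\ii \theta_i} \in \mathbb{R}^2$. The edge constraint $\xX_G \geq 0$ then forces $\cos(\theta_i-\theta_j) \geq 0$ for every $\{i,j\} \in E(G)$; in particular the angles of any clique of $G$ lie in a closed quarter-circle. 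The first step is to strengthen Lemma~\ref{lem:main-cluster} under the hypothesis $\chi''_{\mathrm{vec},3}(G) > \omega$ to show that \emph{every} $\theta_i$ with $r_i > 0$ lies in a single interval $I$ of length exactly $\pi/2$. Once this is established, after rotation the vectors $\vv_i$ sit in a single orthant of $\mathbb{R}^2$, so $\xX \circ \xX$ is the Gram matrix of vectors in a single orthant of $\mathbb{R}^3$, hence completely positive. Applying \eqref{eq:ms} with $\zZ = \xX \circ \xX$ then gives $\langle \jJ, \xX\circ\xX \rangle \leq \omega \cdot \langle \iI+\overline{\aA}, \xX\circ\xX \rangle = \omega$, the desired contradiction.

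To push all angles into a quarter-circle, I would iterate an \emph{angular collapse} move: given the cluster interval $I$ returned by Lemma~\ref{lem:main-cluster} (applied with $\epsilon \to 0$ in the large-$\omega$ limit), take any outlier vertex $i$ with $\theta_i \notin I$ and define $\xX'$ by rotating $\vv_i$ onto the nearest endpoint of $I$. Feasibility, i.e.\ $\xX'_G \geq 0$, holds because every neighbor $j$ of $i$ satisfies $|\theta_i - \theta_j| \leq \pi/2$, so projecting toward the cluster cannot create a gap exceeding $\pi/2$ with any $j \in N(i)$. The objective change under this move can be analyzed via a Fourier expansion analogous to the one in Lemma~\ref{lem:partition-inequality}, since both numerator and denominator are quadratic in the measure $\mu = \sum r_i^2 \delta_{\theta_i}$.

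The main obstacle is to show that the collapse move does not decrease the ratio $\langle \jJ,\xX\circ\xX\rangle / \langle \iI+\overline{\aA},\xX\circ\xX\rangle$. In the existing proof of Lemma~\ref{lem:chi-vec-doubleprime-upper}, the auxiliary map $\phi_\epsilon$ distorts angles by up to $\sqrt{\epsilon}$, and this is precisely the source of the $\omega^{5/6}$ error. Replacing $\phi_\epsilon$ by an honest projection that leverages $\xX_G \geq 0$ (rather than a purely geometric bound) appears to require a careful double-counting between outliers and the main cluster: any outlier must be non-adjacent in $G$ to many cluster vertices (by $\xX_G \geq 0$ plus the large angular gap), so its presence contributes more to the denominator $\langle \iI+\overline{\aA},\xX\circ\xX\rangle$ than to the numerator $\langle \jJ,\xX\circ\xX\rangle$, strictly penalizing the ratio. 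Making this quantitative, and ruling out pathological small-$\omega$ configurations in which boundary effects could mimic the lower bound of Proposition~\ref{prop:chi-3-lower} for the non-primed variants, is the crux of the argument and the reason the statement is conjectured only for sufficiently large $\omega$.
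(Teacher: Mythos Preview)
The statement you are attempting to prove is listed in the paper as an open \emph{conjecture}; the paper gives no proof of it. So there is no paper proof to compare against, and the relevant question is whether your proposal actually establishes the result.

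It does not. Your outline is a research plan rather than a proof, and you are candid about this: you write that ``the main obstacle is to show that the collapse move does not decrease the ratio'' and that ``making this quantitative \ldots\ is the crux of the argument.'' That crux is precisely the content of the conjecture. Observe that your ``first step'' --- forcing all $\theta_i$ with $r_i>0$ into a single interval of length $\pi/2$ --- is already equivalent to the full statement: once the vectors lie in one orthant, $\xX\circ\xX$ is completely positive and \eqref{eq:ms} gives $\langle \jJ,\xX\circ\xX\rangle \le \omega$, done. So everything rests on the collapse argument, and for that you supply only a heuristic (``any outlier must be non-adjacent in $G$ to many cluster vertices \ldots\ strictly penalizing the ratio'') with no inequality backing it.

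There is also a concrete flaw in the feasibility claim for the collapse move. You assert that rotating an outlier $\vv_i$ onto the nearest endpoint of $I$ preserves $\xX'_G\ge 0$ because every neighbour $j$ of $i$ has $\lvert\theta_i-\theta_j\rvert\le\pi/2$. But a neighbour $j$ of $i$ may itself be an outlier lying \emph{farther} from $I$ than $i$ on the same side; moving $i$ toward $I$ then increases $\lvert\theta_i-\theta_j\rvert$, and nothing you have said rules out this exceeding $\pi/2$. More seriously, even when feasibility is preserved, the effect of the move on the denominator $\langle \iI+\overline{\aA},\xX\circ\xX\rangle$ depends on the non-edges incident to $i$, whose angular positions you do not control; the Fourier expansion of Lemma~\ref{lem:partition-inequality} averages over \emph{all} $\alpha$ and gives no information about a single deterministic rotation. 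Until the monotonicity of the ratio under collapse is proved (or replaced by a different mechanism), the argument remains a sketch of why the conjecture is plausible rather than a proof.
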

If this conjecture turns out to be false, it is interesting to determine a tight upper bound for $\chi_{\mathrm{vec}, 3}''(G) - \omega$.

By unpacking the definition of $\chi_{\mathrm{vec}, 3}''(G)$, the conjecture can be rephrased as a weighted analog of Tur\'{a}n's theorem. We record this statement below.
\begin{conjecture}
For sufficiently large $\omega$, the following holds. Let $G = (V, E)$ be a graph with clique number $\omega$. For each vertex $i$, we assign a two-dimensional vector $\vv_i \in \rR^2$. For each pair of vertices $(i, j)$, we assign the ``edge weight" $$\ww_{i, j} = \langle \vv_i, \vv_j \rangle^2.$$ 
If the vectors satisfy $\langle \vv_i, \vv_j \rangle \geq 0$ for every $\{i, j\} \in E$, then we have
$$\sum_{\{i, j\} \in E} \ww_{i, j} \leq \frac{1}{2}\left(1 - \frac{1}{\omega}\right) \sum_{i, j \in V} \ww_{i, j}$$
where the sum on the left hand side goes over each edge once.
\end{conjecture}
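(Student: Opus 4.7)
The plan is to combine a quantitative-stability form of the Motzkin-Straus inequality with a refinement of the angular-concentration argument of Section~\ref{sec:sublinear}. Write $\xX = \vV \vV^T$ with $\vv_i = r_i \e^{\ii\theta_i}$, so $\xX_{ij} = r_i r_j \cos(\theta_i - \theta_j)$. When $\theta_i \equiv \bar\theta$, the matrix $\xX \circ \xX = (r_i^2 r_j^2)_{i,j}$ is rank one and completely positive, and the classical Motzkin-Straus inequality (cf.~\eqref{sdp1}) immediately yields $\chi''_{\mathrm{vec},3}(G) \leq \omega(G)$. The conjecture therefore asserts that the additional rank-$2$ freedom cannot improve the ratio when $\omega$ is large. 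Note that this largeness hypothesis is essential: the configuration on $C_5$ used in Proposition~\ref{prop:chi-3-lower} already gives $\chi''_{\mathrm{vec},3}(C_5) > 2 = \omega(C_5)$.

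The first step is to bootstrap Lemma~\ref{lem:main-cluster} into a much stronger concentration statement: for any feasible $\xX$ with ratio $\langle \jJ, \xX \circ \xX \rangle / \langle \iI + \ov{\aA}, \xX \circ \xX \rangle > \omega$, almost all of the angular mass $\sum r_i^2$ should lie in an arc of length $\eta = o(\omega^{-1/2})$. The heuristic is that the constraint $\xX_G \geq 0$ forces any transverse angular mass to produce non-edge pairs with appreciable cosines, inflating the denominator by an amount incompatible with the contradiction hypothesis. Concretely one would iterate the angular partition inside successively narrower arcs, at each step using that pairs between far apart angular clusters must be non-edges and quantifying the corresponding loss as an $\Omega(1/\omega)$ drop in the ratio.

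Once the angular measure is concentrated within an arc of length $\eta$, I would Taylor-expand
\[
\xX_{ij}^2 = r_i^2 r_j^2 \bigl(1 - (\theta_i - \theta_j)^2 + O(\eta^4)\bigr).
\]
The leading term reduces the problem to the rank-1 inequality of classical Motzkin-Straus, which yields the bound $\omega$ exactly. The remaining quadratic correction is handled by a dichotomy: either the rank-1 ratio is bounded away from $\omega$ by a definite slack, which absorbs the $O(\eta^2)$ error, or the rank-1 ratio is within $O(1/\omega)$ of $\omega$, in which case a \emph{quantitative stability} version of Motzkin-Straus forces the mass vector $(r_i^2)_i$ to be close in $L^1$ to the uniform measure on an $\omega$-clique. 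In the latter structured case the non-edge quadratic contributions vanish to leading order and the correction can be checked directly.

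The main obstacle is the bootstrap: establishing concentration in an arc of length $o(\omega^{-1/2})$. The existing techniques (Lemma~\ref{lem:main-cluster} combined with Lemma~\ref{lem:MS-almost}) lose a polynomial factor at each iteration, so a naive iteration may fail to produce a vanishing arc length. A possible alternative is a direct variational argument: given the optimum $\xX$, consider the interpolation $\theta_i \mapsto (1-t)\theta_i + t \bar\theta$ toward the rank-1 configuration, and show that the first variation of the objective ratio at $t=0$ is non-negative for large $\omega$. Such a monotonicity would reduce the conjecture immediately to the rank-1 case, but verifying it requires delicate control of the interplay between the positivity constraints $\xX \succeq 0$ and $\xX_G \geq 0$ on near-extremal solutions, and the obstruction exhibited by $C_5$ shows that any such interpolation must fail for small $\omega$.
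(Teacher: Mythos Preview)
The statement you are addressing is an \emph{open conjecture} in the paper, not a theorem; the authors explicitly write that they ``could not make any of them work'' after trying several known approaches to Motzkin--Straus and Tur\'an. There is therefore no paper proof to compare against, and what you have written is a research plan rather than a proof. You acknowledge this yourself: the bootstrap to angular concentration at scale $o(\omega^{-1/2})$ is the crux, and you note that iterating Lemma~\ref{lem:main-cluster} with Lemma~\ref{lem:MS-almost} loses a polynomial factor each round, so the iteration does not close. The variational alternative you propose is equally speculative. Until one of these steps is actually carried out, the argument remains a heuristic outline.

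One concrete error: you assert that ``the configuration on $C_5$ used in Proposition~\ref{prop:chi-3-lower} already gives $\chi''_{\mathrm{vec},3}(C_5) > 2$.'' It does not. In that configuration the vectors have arguments $0,\pi/5,2\pi/5,3\pi/5,4\pi/5$, so on the edge $\{1,5\}$ of $C_5$ one has $\xX_{15}=\cos(4\pi/5)<0$, violating the constraint $\xX_G\geq 0$ that distinguishes $\chi''_{\mathrm{vec},3}$ from $\chi'_{\mathrm{vec},3}$. The paper says exactly this: ``Proposition~\ref{prop:chi-3-lower} does not apply to $\chi_{\mathrm{vec}, 3}''(G)$.'' Consequently your claim that ``the largeness hypothesis is essential'' is unsupported; the paper leaves open whether $\chi''_{\mathrm{vec},3}(G)=\omega(G)$ might in fact hold for \emph{all} $G$, and your final sentence about the $C_5$ obstruction forcing any interpolation to fail for small $\omega$ is likewise unfounded.
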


It is not hard to check that if $\vv_i$ are one-dimensional, then this is equivalent to the Motzkin-Straus inequality. Furthermore, Proposition~\ref{prop:chi-3-lower} shows that the condition $\langle \vv_i, \vv_j \rangle \geq 0$ for every $\{i, j\} \in E$ is necessary. We may try any of the numerous proofs of the Motzkin-Straus inequality and Tur\'{a}n's theorem on this problem (see e.g. \cite{gtac, weightedturan}). Unfortunately, we could not make any of them work.


\subsubsection*{Acknowledgements}

G. Coutinho and T. Spier acknowledge research grants from FAPEMIG and CNPq, and conversations about the topic of this paper with Marcel K. de Carli Silva, Thiago Oliveira and Levent Tunçel.

S. Zhang thanks Stanford University for supporting his research with the Craig Franklin Fellowship in Mathematics. S. Zhang thanks Prof. Hong Liu, Ting-Wei Chao, Zixiang Xu, Zhuo Wu, Jun Gao, and many others for productive discussions at the Institute for Basic Science in Korea. Shengtong also thanks Dr. Jonathan Tidor, Nitya Mani and Anqi Li for discussions and encouragements.
\bibliographystyle{plain}
\IfFileExists{references.bib}
{\bibliography{references.bib}}
{\bibliography{../references}}

	
\end{document}